\newcommand{\eps}{\varepsilon}
\newcommand{\embeds}{\hookrightarrow}
\newcommand{\cA}{\mathcal{A}}
\newcommand{\cC}{\mathcal{C}}
\newcommand{\cE}{\mathcal{E}}
\newcommand{\cH}{\mathcal{H}}
\newcommand{\cI}{\mathcal{I}}
\newcommand{\cL}{\mathcal{L}}
\newcommand{\cLiso}{\cL_{\mathrm{iso}}}
\newcommand{\cM}{\mathcal{M}}
\newcommand{\cP}{\mathcal{P}}
\newcommand{\cU}{\mathcal{U}}
\newcommand{\bfW}{\mathbf{W}}
\newcommand{\bfX}{\mathbf{X}}
\newcommand{\Diri}{D}
\newcommand{\Neumann}{\Gamma}
\newcommand{\fd}{\mathfrak{d}}
\newcommand{\reaction}{r}
\newcommand{\field}[1]{\mathbb{#1}}
\newcommand{\N}{\field{N}}
\newcommand{\R}{\field{R}}
\newcommand{\C}{\field{C}}
\newcommand{\dx}{\,\mathrm{d}x}
\newcommand{\dy}{\,\mathrm{d}y}
\newcommand{\dd}{\,\mathrm{d}}
\DeclareMathOperator{\tr}{tr}      
\DeclareMathOperator{\dom}{dom}    
\DeclareMathOperator{\supp}{supp}  
\DeclareMathOperator{\dive}{div}   
\DeclareMathOperator{\dist}{dist}   
\DeclareMathOperator{\itr}{i-tr}
\renewcommand{\Re}{\operatorname{Re}}
\newcommand{\blangle}{\bigl\langle}
\newcommand{\brangle}{\bigr\rangle}
\newcommand{\defn}{\coloneqq}
\numberwithin{equation}{section}
\newenvironment{keywords}{{\footnotesize\textbf{Keywords}: \quad}\footnotesize}{

}
\newenvironment{AMS}{{\footnotesize\textbf{AMS classification}
  (2020):\quad}\footnotesize}{

  }
\newtheorem{theorem}{Theorem}[section]
\newtheorem{lemma}[theorem]{Lemma}
\newtheorem{corollary}[theorem]{Corollary}
\newtheorem{proposition}[theorem]{Proposition}
\theoremstyle{definition}
\newtheorem{definition}[theorem]{Definition}
\newtheorem{assu}[theorem]{Assumption}
\newtheorem{rem}[theorem]{Remark}
\theoremstyle{remark}
\begin{document}

\title{Extrapolated elliptic regularity and application to the van Roosbroeck system
  of semiconductor equations} \author{Hannes Meinlschmidt, Joachim
  Rehberg}

\maketitle

\begin{abstract}
  In this paper we present a general \emph{extrapolated elliptic
    regularity} result for second order differential operators in
  divergence form on fractional Sobolev-type spaces of negative order
  $X^{s-1,q}_D(\Omega)$ for $s > 0$ small, including mixed boundary
  conditions and with a fully nonsmooth geometry of $\Omega$ and the
  Dirichlet boundary part $D$. We expect the result to find
  applications in the analysis of nonlinear parabolic equations, in
  particular for quasilinear problems or when treating coupled systems
  of equations. To demonstrate the usefulness of our result, we give a
  new proof of local-in-time existence and uniqueness for the
  van Roosbroeck system for semiconductor devices which is
  much simpler than already established proofs.
\end{abstract}

\begin{keywords}
  Elliptic regularity, nonsmooth geometry, Sneiberg stability
  theorem, fractional Sobolev spaces, van Roosbroeck system,
  semiconductor equations
\end{keywords}
\begin{AMS}
  35J25, 35B65, 35R05, 35Q81, 92E20
\end{AMS}

\section{Introduction}

Let $\Omega \subset \R^d$ be a bounded domain with nonsmooth
boundary of which the set $\Diri$ is a subset. Let further $\rho$
be a bounded measurable uniformly-a.e.\ positive definite
coefficient matrix defined on $\Omega$, and let
$-\nabla\cdot\rho\nabla$ be the associated second-order
differential operator in divergence form. One may consider this
operator as the principal part of a possibly more general linear
differential operator. Assume that for some $q \in (1,\infty)$ the
following optimal elliptic regularity property holds true:
\begin{equation}
-\nabla\cdot\rho\nabla u \in W^{-1,q}_\Diri(\Omega)  \quad \implies \quad
u \in W^{1,q}_\Diri(\Omega),\label{eq:intro-assu}
\end{equation}
where
$W^{-1,q}_\Diri(\Omega) \defn (W^{1,q'}_\Diri(\Omega))^\star$,
the space of antilinear functionals on $W^{1,q'}_\Diri(\Omega)$,
and the subscript $\Diri$ refers to zero boundary trace on
$\Diri$. Of course, the probably best known optimal elliptic
regularity result is the Hilbert space case $q=2$
where~\eqref{eq:intro-assu} is always true under our assumptions
on $\rho$ if $\Diri$ is sufficiently large such that
$\mathbf{1} \notin W^{1,2}_\Diri(\Omega)$. There are countless
works extending this result also to~\eqref{eq:intro-assu} for
the integrability scale $q>2$; we mention
exemplarily~\cite{jonsson,haller,groeger,BMMM} where also mixed
boundary conditions and \emph{nonsmooth} data $\Omega$ and
$\rho$ are treated.  In this paper we establish an optimal
elliptic regularity result for a \emph{differentiability} scale
such as
$H^{s-1,q}_\Diri(\Omega) = (H^{1-s,q'}_\Diri(\Omega))^\star$
starting from~\eqref{eq:intro-assu}. More precisely, we show
that if~\eqref{eq:intro-assu} holds true for some
$q \in (1,\infty)$ and if there is $\tau > 0$ such that each
component $\rho_{ij}$ of the coefficient matrix function is a
multiplier on the Bessel potential space
$H^{\tau,q}_\Diri(\Omega)$, then there exists a number
$\bar s \in (0,\tau]$ such that
\begin{equation}\label{eq:intro-extrapolate-result}
-\nabla\cdot\rho\nabla u \in H^{s-1,q}_\Diri(\Omega)  \quad \implies \quad
u \in H^{1+s,q}_\Diri(\Omega)
\end{equation}
for $s \in (-\bar s,\bar s)$. (We give precise definitions of function
spaces and assumptions in \cref{sec:preliminaries}
below.)  The result is obtained from~\eqref{eq:intro-assu} by an
\emph{extrapolation} technique: We establish that
$W^{1,q}_D(\Omega)$ and $W^{-1,q}_D(\Omega)$ are ``interior
points'' in the interpolation scale of Bessel potential spaces
and the dual scale, and that $-\nabla\cdot\rho\nabla$ is
compatible with that scale. Then the Sneiberg extrapolation
theorem~(\cite{sneib}) gives the result. This is what is meant
by the titular \emph{extrapolated elliptic regularity}. We
remark that~\eqref{eq:intro-assu} is already nontrivial to
have, in particular if $q$ is not around $2$. The
extrapolation technique and recent interpolation results also
allow to obtain~\eqref{eq:intro-extrapolate-result} for the
Slobodetskii scale $W^{1+s,q}_D(\Omega)$ and
$W^{s-1,q}_D(\Omega)$ as a byproduct. Moreover, we in fact
establish~\eqref{eq:intro-extrapolate-result} not only for pure
second-order operators but also for such including lower order
terms and in particular boundary forms arising from Robin
boundary conditions. Thanks to a quantitative version of the
Sneiberg theorem which was recently established
in~\cite{AuschEg}, we can also provide
property~\eqref{eq:intro-extrapolate-result} and bounds on the
inverse operators uniform in the given data. Such uniform
results are extremely useful in the treatment of nonautonomous
or even quasilinear evolution equations,
cf.~\cite{MR16,pruess,PruessSchnaubelt}.

Note that while inferring~\eqref{eq:intro-extrapolate-result}
from~\eqref{eq:intro-assu} may feel like an ``expected'' result, the
necessary groundwork behind the reasoning is highly nontrivial since
we suppose essentially no smoothness in the data at all. This is in
particular the case since~\eqref{eq:intro-extrapolate-result} for
$q>d$ (ambient space dimension) is of elevated interest to us for
conceptual reasons in the treatment of abstract nonlinear evolution
equations. Let us take this for granted at the moment; we explain it
in detail in the next subsection of this introduction. It is known
since the sixties that in the present case of nonsmooth data, one in
general cannot expect $q$ in the assumed~\eqref{eq:intro-assu} to be
larger than a prescribed number $\bar q > 2$, see for
example~\cite{meyers,savare,elschner}. (Due to Sobolev embeddings, the
size of $s$ in~\eqref{eq:intro-extrapolate-result} is thus also
limited in the general case.) This makes already the
assumption~\eqref{eq:intro-assu} sensible for $q>d=3$.  In fact, to
the best of the authors' knowledge, the only comparable results
for~\eqref{eq:intro-extrapolate-result} which include mixed boundary
conditions and nonsmooth data are~\cite{joch}, for a relatively
restricted geometry, and~\cite{haller}, with very general
geometry. Both works are limited to $q$ close to $2$
in~\eqref{eq:intro-extrapolate-result}, starting from the Lax Milgram
result. Another conceptual obstacle is the availability of a suitable
interpolation theory framework for $H^{\sigma,q}_D(\Omega)$ spaces
also for $q \neq 2$. Fortunately, both issues have been resolved
recently:

\begin{enumerate}[(i)]
\item In~\cite{disser}, the authors collect a rich setting of
  geometric constellations for $\Omega,\Diri$ and the coefficient
  functions $\rho$ under which~\eqref{eq:intro-assu} is satisfied for
  $q>d=3$. This includes a wide array of quite nonsmooth situations
  occurring in real-world problems.
\item In their seminal paper~\cite{BM19}, Bechtel and Egert
  establish a comprehensive interpolation theory for the
  Bessel potential (and Sobolev Slobodetskii) scale in an
  extremely general geometric setup. Their work extends
  previously known results under similar geometric assumptions
  in~\cite{EHDT} for the Hilbert scale corresponding to
  $q=2$. (In fact, these older results were used
  in~\cite{haller}.)
\end{enumerate}

We explicitly point out that both works are highly
nontrivial and in turn rest on other difficult
results. (See~\cite[Introduction]{disser} for more background.)
Let us also note that already~\eqref{eq:intro-assu} for $q>d$
itself has turned out to be an extremely valuable and well
suited---one might even say, indispensable---property in the
treatment of nonlinear and/or coupled systems of evolution
equations with highly nonsmooth data arising in real-life
problems, see e.g.~\cite{vanroos3d,horst,MMR17a,MMR17b}. We next
motivate why we need also the optimal regularity
result~\eqref{eq:intro-extrapolate-result} for $q>d$ in the
fractional Sobolev scales.

\subsubsection*{Motivation and real-world example: semiconductor equations}

One of the main areas where optimal elliptic regularity results
like~\eqref{eq:intro-extrapolate-result} are needed is the analysis of
nonlinear evolution equations. We give a real-world example in
\cref{sec:semiconductor} below by considering the van
  Roosbroeck system of semiconductor equations, but we expect many
more applications to be susceptible to similar reasoning. For now,
consider for example the following abstract Fokker-Planck type
evolution equation posed in some Banach space $X$ over some time
interval $J$ as a model problem:
\begin{equation}
  \label{eq:model-equation-quad}
  \partial_t u - \nabla\cdot\mu\nabla u = \nabla\cdot u\mu
  \nabla \bigl(-\nabla \cdot 
  \rho\nabla\bigr)^{-1}f + |\nabla u|^2 + g \quad
  \text{in}~X, 
\end{equation}
where $\mu$ is another matrix coefficient function of the same quality
as $\rho$, while $f$ and $g$ are appropriate data, the latter e.g.\
coming from inhomogeneous Neumann boundary conditions. Such model
equations are related to the \emph{viscous Hamilton-Jacobi equation}
or the \emph{deterministic KPZ equation}; we exemplarily refer
to~\cite{BGK04,GGK03,Porretta15}. One may imagine having obtained this
abstract equation from eliminating the second equation in the abstract
system
\begin{align*}
    \partial_t u - \nabla\cdot\mu\nabla u -\nabla\cdot u\mu
  \nabla w &=  |\nabla u|^2 + g, \\ -\nabla \cdot 
  \rho\nabla w & = f.
\end{align*}
It turns out that in this situation, in order to deal with the
quadratic nonlinearity in~\eqref{eq:model-equation-quad} in the
framework of classical theory for semilinear
equations~(\cite[Ch.~6]{pazy}), $X$ should be chosen an as
interpolation space of the form
$[L^q(\Omega),W^{-1,q}_\Diri(\Omega)]_{1-s}$ with parameters $q>d$ and
$s \in (0,1-\frac{d}q)$; this was observed in~\cite[Sect.~6]{RoJo},
see also~\cite[Sect.~4.1]{vanroos3d}. We fix $X$ to be such a space
for the following. Note that
$X=[L^q(\Omega),W^{-1,q}_\Diri(\Omega)]_{1-s}$ can indeed be
identified with the (dual) Bessel potential space
$H^{s-1,q}_\Diri(\Omega)$ under very mild assumptions on $\Omega$ and
the geometry of $\Diri$. We also mention that dealing with the
quadratic nonlinearity in~\eqref{eq:model-equation-quad} does
\emph{not} require explicit knowledge of the domains of the elliptic
operators in $X$. This however changes when we consider the drift term
for $u$ where we assume that $f$ is in general not more regular than
generic elements of $H^{s-1,q}_\Diri(\Omega)$---e.g.\ also arising
from inhomogeneous Neumann boundary conditions---, because then we
further have to assure that the operators
$ \nabla \cdot u \mu \nabla (-\nabla \cdot \rho \nabla )^{-1}$
in~\eqref{eq:abstract-model-equation} are bounded ones when considered
on $X$ in order to obtain a self-consistent abstract formulation. More
precisely the domain of $-\nabla \cdot \rho \nabla$ in $X$ must be
continuously embedded into the domain of
$\nabla \cdot u(t) \mu \nabla $ in $X$ for $t \in J$. The optimal case
and thus the natural candidate for the domain of definition for these
elliptic operators in $X=H^{s-1,q}_D(\Omega)$ is the space
$H^{1+s,q}_D(\Omega)$, cf.\ e.g.~\cite[Ch.~5.7.1]{Triebel}. While the
actual domains of the operators $\nabla \cdot u(t) \rho \nabla $ in
$X$ will in general not coincide with $H^{1+s,q}_D(\Omega)$ and vary
with $t$ without further assumptions, one easily observes that
$H^{1+s,q}_D(\Omega)$ is indeed the largest space which will embed
continuously into every such $t$-dependent domain. Thus, in general,
$\nabla \cdot u \mu \nabla (-\nabla \cdot \rho \nabla)^{-1}$ will be
bounded on $X$ exactly when the optimal elliptic regularity
result~\eqref{eq:intro-extrapolate-result} holds true.  In that sense,
wellposedness of the reduced problem~\eqref{eq:model-equation-quad}
boils down exactly to the availability of the optimal regularity
property~\eqref{eq:intro-extrapolate-result} for $q>d$.

In the second part of the paper, we rigorously follow the above
roadmap and prove local-in-time existence and uniqueness for the
van Roosbroeck system for semiconductor devices using the
extrapolated elliptic regularity result. The van Roosbroeck
system describes the evolution of the triple $(u_1,u_2,\varphi)$
of unknowns---representing electron- and hole densities and
electrostatic potential---during the (finite) time interval
$J = (0,T)$ by the following system of coupled equations,
consisting of the \emph{Poisson equation}
\begin{subequations}
  \label{vanRoos}
  \begin{equation}
    \label{Poisson-eq}
    \begin{aligned}
      -\dive\left( \varepsilon \nabla \varphi \right) & =
      {\fd} + u_1 - u_2 &\quad&
      \text{in}~J\times\Omega, \\
      \varphi & = {\varphi}_\Diri &\quad&
      \text{on}~J\times \Diri, \\
      {\nu}\cdot{ \left( \varepsilon \nabla \varphi \right) } +
      \varepsilon_{\Gamma} \varphi & ={\varphi}_\Gamma &\quad&
      \text{on}~J\times\Neumann,
    \end{aligned}
  \end{equation}
  so a quasi-static elliptic equation with inhomogeneous Dirichlet and
  Robin boundary data, and, for $k=1,2$, the \emph{current-continuity
    equations}
  \begin{equation}
  \label{CuCo-eq}
    \begin{aligned}
      \partial_t u_k - \dive j_k
      &= r^\Omega(u,\varphi)  &\quad& \text{in}~J\times (\Omega \setminus \Pi)\\
      u_k &=U_k &\quad&\text{on}~J\times\Diri,
      \\
      {\nu}\cdot{j_k} &= {r}^\Gamma(u,\varphi) &\quad&\text{on}~J\times\Neumann,
      \\
      [{\nu}\cdot{j_k}] &= {r}^\Pi(u,\varphi)&\quad&\text{on}~J\times\Pi, \\
      u_k(0) & = u_k^0 &\quad& \text{on}~\Omega,
    \end{aligned}
  \end{equation}
  with the currents
  \begin{equation} \label{eq:curr-dens} j_k = \mu_k\bigl(\nabla u_k  +(-1)^k u_k
    \nabla \varphi \bigr).
  \end{equation}
\end{subequations}
The latter equations are nonlinear Fokker-Planck equations with
inhomogenenous mixed boundary conditions and a jump condition along a
surface $\Pi \subset \Omega$. Here, $\Omega \subset \R^3$ is a bounded
domain representing a semiconductor device, $\nu$ its unit outer
normal at $\partial\Omega$ and the latter is decomposed into a
Dirichlet part $\Diri$ and a Neumann/Robin part
$\Neumann \defn \partial\Omega\setminus \Diri$. We pose only very low
regularity assumptions on the geometry of $\Diri,\Neumann$ and $\Pi$
which will cover nearly all practical situations arising in realistic
devices. This is made more precise in \cref{sec:semiconductor}
below, where the model and the involved quantities are also explained
in detail.  We refer to the introduction of~\cite{vanroos3d} for a
comprehensive collection of related literature. In fact, the van
Roosbroeck system~\eqref{vanRoos} was treated under similar
assumptions recently in~\cite{vanroos3d}; however, the analysis there
is quite involved since the system need be reformulated ``globally''
in the \emph{quasi Fermi levels}. We are able to provide a much
simpler treatment basing on the extrapolated elliptic regularity
result~\eqref{eq:intro-extrapolate-result} by
solving~\eqref{Poisson-eq} for $\varphi$ in dependence of $u$ and
inserting this dependence into~\eqref{CuCo-eq}, thereby reducing the
current-continuity equations to equations in $u$ alone. Let us explain
the principal idea and its connection to the above.

Suppose that we have formally solved~\eqref{Poisson-eq} for $\varphi$
in dependence of $u$ and consider the (reduced) \emph{recombination}
functions $u\mapsto (r^\Omega,r^\Neumann,r^\Pi)(u,\varphi(u))$
in~\eqref{CuCo-eq}. Then an abstract reduced formulation
of~\eqref{CuCo-eq} would be
\begin{equation}
  \label{eq:abstract-model-equation}
  \partial_t u - \nabla\cdot\mu\nabla u = \nabla \cdot u\mu
  \nabla\bigl(-\nabla\cdot\eps\nabla  +
  \tr_\Neumann^*\eps_\Neumann\tr_\Neumann\bigr)^{-1}(\fd+u+\tr_\Neumann^*\varphi_\Neumann) +  f(u) 
\end{equation}
where the nonlinearity $f$ represents the reduced recombination
functions, $\tr_\Neumann$ is the trace operator onto $\Neumann$, and
we have ignored the multiple components of $u$ and the Dirichlet
boundary data in the equations for the sake of exposition at this
point. This equation is of the same type as the model
problem~\eqref{eq:model-equation-quad}. In fact, it turns out that the
commonly used \emph{Avalanche generation} model for $r^\Omega$
contained in $f$ in~\eqref{eq:abstract-model-equation} in a sense
behaves quite similarly to the quadratic gradient nonlinearity
in~\eqref{eq:abstract-model-equation}, see
\cref{rem:similarity-quad}, and all the arguments from the above
motivation apply. In the case of~\eqref{vanRoos}, we indeed need
property~\eqref{eq:intro-extrapolate-result} also for the second order
operator including the boundary form
$ \tr_\Neumann^*\eps_\Neumann\tr_\Neumann$ corresponding to the Robin
boundary conditions.

\subsection*{Outline}

The first part of this work first establishes the necessary groundwork
for all of the following in \cref{sec:preliminaries}. We prove
the extrapolated elliptic regularity result in full generality with
lower order terms together with the necessary preparations as
announced in the introduction in \cref{sec:extr-ellipt-regul}
(\cref{thm:invert-extrapol}). In the second part,
\cref{sec:semiconductor}, the elliptic regularity results are
then put to work for providing a proof of (local-in-time) existence
and uniqueness of solutions to the Van Roosbroeck
  system~\eqref{vanRoos} which is considerably easier than having to
deal with one big macroscopic standard model for the electron/hole
flux within the semiconductor as done in~\cite{vanroos3d}
(\cref{t-formulat}). We restrict ourselves to Boltzmann
statistics. This is done only for technical simplicity, since already
here all crucial effects which we want to make visible are already
present. We note that one can carry out an analogous program for the
quasilinear system arising in case of Fermi-Dirac statistics, see
\cref{r-concl}.

\section{Preliminaries}\label{sec:preliminaries}

All notation used in this paper is considered as standard or
self-explanatory by the authors. Up to
\cref{sec:semiconductor}, where we treat the van
  Rooesbroeck system~\eqref{vanRoos}, we consider a general space
dimension $d \geq 2$. Starting from \cref{sec:semiconductor},
we fix $d=3$.

\subsection{Assumptions}\label{sec:assumptions}

We pose the following general assumptions on the underlying spatial
domain $\Omega \subseteq \R^d$ and its boundary part
$D \subseteq \partial\Omega$. They are supposed to hold true from now
on for the rest of this work. We recall the following notion, refering
to e.g.~\cite{JW84}:

\begin{definition}[Regular set]\label{def:ahlfors}
  Let $0 < N \leq d$. The set $\Lambda \subseteq \R^d$ is called
  \emph{$N$-set} or \emph{$N$-regular}, if there exist constants
  $0<c\leq C$ such that
  \begin{equation}\label{eq:N-set}
    cr^N \leq \cH_N(B_r(x) \cap \Lambda) \leq Cr^N \qquad (x \in \Lambda,~r \in (0,1]).
  \end{equation}
\end{definition}

\begin{rem}
  For $N=d$, the upper estimate requirement in~\eqref{eq:N-set}
  is trivial. Thus, the \emph{interior thickness condition},
  so that there exists $\gamma > 0$ such that
  \begin{equation}\label{eq:ICTfull}
    {|B_r(x) \cap \Lambda | } \ge \gamma |B_r(x)| \qquad (x \in \Lambda,~r \in (0,1]),\tag{ICT}
  \end{equation}
  becomes a sufficient condition for $\Lambda$ to be $d$-regular. In
  fact, the interior thickness condition~\eqref{eq:ICTfull} can
  equivalently be required only for
  $x \in \partial\Lambda$~(\cite[Lem.~3.2]{Bec20}). In the latter
  form, the property is also called \emph{$d$-thick} by some authors,
  see e.g.~\cite{BMMM}. There will be yet another \emph{thickness}
  assumption for the treatment of the semiconductor equations in
  \cref{ass:geometry-extend}.
  \label{rem:ahlfors-thickness}
\end{rem}

\begin{assu}[Geometry] \label{a-geometry} The set
  $\Omega \subset \R^d$ is a bounded domain satisfying the
  \emph{interior thickness
    condition}~\eqref{eq:ICTfull}. (Equivalently: $\Omega$ is a
  $d$-set.) Moreover, the boundary
  $\partial\Omega$ has the following properties:
  \begin{enumerate}[(i)]    
  \item $D \subseteq \partial\Omega$ is a closed
    $(d-1)$-set.
  \item There are Lipschitz coordinate charts available around
    $\overline{\partial\Omega \setminus D}$, that is, for every
    $x \in \overline{\partial\Omega \setminus D}$, there is an open
    neighborhood $\cU$ of $x$ and a bi-Lipschitz mapping
    $\phi_x \colon \cU \to (-1,1)^d$ such that $\phi_x(x) = 0$ and
    $\phi_x(\cU\cap\Omega) = (-1,0) \times (-1,1)^{d-1}$.
  \end{enumerate}
\end{assu}

\begin{rem} \label{r-d-1} From $(d-1)$-regularity of $\Diri$ and the
  Lipschitz charts for $\overline{\partial \Omega \setminus D}$ we
  obtain that the whole boundary $\partial\Omega$ is also a
  $(d-1)$-set. See~\cite[Ex.~2.4/2.5]{BM19}.
\end{rem}

\subsection{Function spaces}\label{sec:function-spaces}

For $s \in \R$ and $p \in (1,\infty)$, let $H^{s,p}(\R^d)$ denote the
Bessel potential spaces. We mention that
$H^{-s,p'}(\R^d) = H^{s,p}(\R^d)^\star$.  We further note that for
$k \in \N_0$, the classical Sobolev space of $k$th order
$W^{k,p}(\R^d)$ coincides with $H^{k,p}(\R^d)$ up to equivalent norms.
See e.g.~\cite[Ch.~2.3.3\&2.6.1]{Triebel}.

\begin{definition}[Sobolev-Slobodetskii spaces]\label{def:slobo-spaces}
  Let $p \in (1,\infty)$ and $s > 0$ not an integer. Write
  $s = k + \sigma$ with $k \in \N_0$ and $\sigma \in (0,1)$.  Then the
  space $W^{s,p}(\R^d)$ is given by the normed vector space of
  functions $u \in L^p(\R^d)$ for which
  \begin{equation*}
    \|u\|_{W^{s,p}(\R^d)} := \|u\|_{W^{k,p}(\R^d)} +
    \left(\sum_{i=1}^d\iint_{\R^d\times\R^d} \frac {|\partial_i^k u(x)-\partial_i^ku(y)|^p}{|x-y|^{d + \sigma p}} \dx \dy\right)^{1/p} < \infty.
  \end{equation*}
  Moreover, we define $W^{-s,p'}(\R^d) \defn W^{s,p}(\R^d)^\star$, the
  space of antilinear continuous functionals on $W^{s,p}(\R^d)$.
\end{definition}

Let $X \in \{H,W\}$ for the remainder of this section. We next turn to
traces.

\begin{proposition}[{\cite[Thms.~VI.1\&VII.1]{JW84}}]
  \label{prop:restriction-d-1}
  Let $E \subset \R^d$ be a $(d-1)$-set and let
  $s \in (\frac1p,1+\frac1p)$ with $p \in (1,\infty)$. Then the
  trace operator $\tr_E$ defined by
  \begin{equation*}
    (\tr_E u)(x) :=  \lim_{r\searrow0}\frac1{|B_r(x)|}
    \int_{B_r(x)} u \quad (x \in E)
  \end{equation*}
  maps $X^{s,p}(\R^d)$ continuously into $L^p(E;\cH_{d-1})$.
\end{proposition}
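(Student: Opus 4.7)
The plan is to reproduce the Jonsson--Wallin trace construction, which amounts to proving that the averages $m_r(x) \defn |B_r(x)|^{-1}\int_{B_r(x)} u$ converge $\cH_{d-1}$-a.e.\ on $E$ as $r \searrow 0$, with the limit bounded in $L^p(E;\cH_{d-1})$ by $\|u\|_{X^{s,p}(\R^d)}$. First, I would dispose of the Bessel potential case $X = H$ by picking some auxiliary $s' \in (1/p, s)$ and invoking the standard continuous embedding $H^{s,p}(\R^d) \embeds W^{s',p}(\R^d)$ (a Littlewood--Paley/Besov comparison), reducing everything to the Sobolev--Slobodetskii case. Then, writing $s = k+\sigma$ with $\sigma \in (0,1)$, the case $k = 1$ (i.e.\ $s \in (1,1+1/p)$) further reduces to $k = 0$ by applying the latter to each weak derivative $\partial_i u \in W^{\sigma,p}(\R^d)$. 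So the core case I would attack is $s = \sigma \in (1/p, 1)$.

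Second, I would bound the dyadic increments of $m_r$ along $r_n \defn 2^{-n}$. A double-averaging argument, H\"older's inequality, and the trivial bound $|y-z|\le 2r_n$ on $B_{r_n}(x)\times B_{r_n}(x)$ produce
\begin{equation*}
|m_{r_n}(x) - m_{r_{n+1}}(x)|^p \le C\, r_n^{sp-d} \iint_{B_{r_n}(x)\times B_{r_n}(x)} \frac{|u(y)-u(z)|^p}{|y-z|^{d+sp}}\dd y \dd z,
\end{equation*}
the exponent $sp-d$ coming from the volume factors $|B_{r_n}|^{-1}|B_{r_{n+1}}|^{-1}$ combined with $|y-z|^{d+sp}\le (2r_n)^{d+sp}$.

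Third, I would integrate over $x \in E$ with respect to $\cH_{d-1}$ and apply Fubini. The crucial measure-theoretic input is the upper Ahlfors bound: for fixed $y, z \in \R^d$ the set $\{x \in E : y, z \in B_{r_n}(x)\}$ sits inside $E \cap B_{r_n}(y)$ and hence has $\cH_{d-1}$-measure at most $C r_n^{d-1}$. This yields
\begin{equation*}
\|m_{r_n} - m_{r_{n+1}}\|_{L^p(E;\cH_{d-1})} \le C\, r_n^{s-1/p}\, \|u\|_{W^{s,p}(\R^d)},
\end{equation*}
and because $s > 1/p$ the exponent $s-1/p$ is strictly positive, so the right-hand side is summable in $n$. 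By the $L^p$-triangle inequality the telescope $m_{r_0} - m_{r_N}$ converges in $L^p(E;\cH_{d-1})$, hence also $\cH_{d-1}$-a.e.\ along a subsequence, and the pointwise limit is $\tr_E u$. A routine continuity argument using the same averaging bound shows that the limit does not depend on the dyadic choice, giving existence of $\lim_{r\searrow 0} m_r(x)$ for $\cH_{d-1}$-a.e.\ $x \in E$ together with the asserted norm estimate.

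The main obstacle is the exponent bookkeeping in the third step: the $(d-1)$-regularity of $E$ has to pair \emph{exactly} with the homogeneity of the Slobodetskii double integral, and any slack in the estimate $\cH_{d-1}(E \cap B_{r_n}(y)) \le C r_n^{d-1}$ would destroy summability. The lower bound $s > 1/p$ is precisely what renders the resulting geometric series convergent, while the upper bound $s < 1 + 1/p$ reflects the fact that beyond this range the trace would naturally land in higher-order smoothness spaces on $E$---essentially traces of derivatives---rather than merely in $L^p(E;\cH_{d-1})$.
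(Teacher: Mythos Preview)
The paper does not supply its own proof of this proposition; it is quoted verbatim from Jonsson--Wallin, so there is nothing to compare your argument against on the paper's side. Your sketch is essentially the Jonsson--Wallin construction and is sound in its core step: the dyadic increment bound, the Fubini swap using the upper $(d-1)$-regularity estimate $\cH_{d-1}(E\cap B_{r_n}(y))\le Cr_n^{d-1}$, and the resulting summable geometric series in $r_n^{s-1/p}$ are exactly right. The passage from $L^p$-summability of the increments to $\cH_{d-1}$-a.e.\ pointwise convergence of the full dyadic sequence (not just a subsequence) follows because $\sum_n\|m_{r_n}-m_{r_{n+1}}\|_{L^p(E)}<\infty$ forces $\sum_n|m_{r_n}(x)-m_{r_{n+1}}(x)|<\infty$ for $\cH_{d-1}$-a.e.\ $x$; you might make that explicit. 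The anchor term $\|m_1\|_{L^p(E;\cH_{d-1})}$ is controlled by $\|u\|_{L^p(\R^d)}$ via the same Fubini/Ahlfors trick, which you should also mention.

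One genuine slip: your reduction of the case $s\in(1,1+\tfrac1p)$ to the case $s\in(\tfrac1p,1)$ is not correct as written. Writing $s=1+\sigma$ gives $\sigma\in(0,\tfrac1p)$, so $\partial_i u\in W^{\sigma,p}(\R^d)$ with $\sigma<\tfrac1p$, and your ``core case'' does not apply to the derivatives; moreover, knowing traces of the $\partial_i u$ would not by itself produce the trace of $u$. The fix is much simpler than what you attempt: just use the embedding $W^{s,p}(\R^d)\embeds W^{s',p}(\R^d)$ for any $s'\in(\tfrac1p,1)\subset(\tfrac1p,s)$ and run the core argument at smoothness $s'$. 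This is the same device you already used for the $H\to W$ reduction.
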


\begin{definition}[Function spaces with zero trace]
  \label{def:func-spaces-dirichlet}
  Let $E \subset \R^d$ be a $(d-1)$-set and let
  $s \in (\frac1p,1+\frac1p)$ with $p \in (1,\infty)$. Then we define
  $X^{s,p}_E(\R^n) := \ker \tr_E$ in $X^{s,p}(\R^n)$.
\end{definition}

The versions of the spaces $X^{s,p}$ and $X^{s,p}_E$ on $\Omega$ are
defined as quotient spaces corresponding to restriction to $\Omega$ of
their $\R^d$ versions as follows:

\begin{definition}[Function spaces on $\Omega$]
  \label{def:func-spaces-domain}
  Let $p \in (1,\infty)$ and $s > 0$.
  \begin{enumerate}[(i)]
  \item We define $X^{s,p}(\Omega)$ to be the factor space of
    restrictions to $\Omega$ of $X^{s,p}(\R^d)$, equipped with the
    natural quotient norm. Moreover,
    $X^{-s,p'}(\Omega) \defn X^{s,p}(\Omega)^\star$.
  \item Let now $s \in (\frac1p,1 + \frac {1}{p})$ and let
    $E \subseteq \overline\Omega$ be a $(d-1)$-set. Then, as before,
    we define $X^{s,p}_E(\Omega)$ to be the factor space of
    restrictions to $\Omega$ of $X_E^{s,p}(\R^d)$, equipped with the
    natural quotient norm. Moreover,
    $X^{-s,p'}_E(\Omega) \defn X^{s,p}_E(\Omega)^\star$.
  \end{enumerate}
\end{definition}

\begin{rem}
  \label{rem:restriction-def-embedding}
  The definition of the spaces $X^{s,p}(\Omega)$
  as factor spaces of restrictions implies that these spaces inherit
  the usual Sobolev-type embeddings between them from their full-space
  analogues.
\end{rem}

\begin{rem}
  \label{rem:intrinsic-slobodetskii}
  Let $s \in (0,1)$. Then it is well known that since $\Omega$
  satisfies~\eqref{eq:ICTfull}, the factor space $W^{s,p}(\Omega)$
  agrees with the space $W^{s,p}_*(\Omega)$ defined
  intrinsically by the set of all functions $u \in L^p(\Omega)$ such
  that
  \begin{equation*}
    \|u\|_{W^{s,p}_*(\Omega)} := \|u\|_{L^p(\Omega)} +    
    \left(\iint_{\Omega\times\Omega} \frac {|u(x)-u(y)|^p}{|x-y|^{d
          + sp}} \dx \dy\right)^{1/p} < \infty
  \end{equation*}
  up to equivalent norms. (See~\cite[Thm.~V.1]{JW84}). Moreover, very
  recently it was shown in~\cite{Bec20} that if
  $E \subseteq \partial\Omega$ is $(d-1)$-regular and $\Omega$
  satisfies the interior thickness condition~\eqref{eq:ICTfull} for
  $x \in \partial\Omega\setminus E$, then $W^{s,p}_E(\Omega)$
  coincides with the intrinsically given
  $W^{s,p}_{*}(\Omega) \cap L^p(\Omega,\dist_E^{-sp})$, also up to
  equivalent norms.
\end{rem}

We next quote interpolation results from~\cite{BM19} for
\emph{symmetric} interpolation where both involved spaces carry
partially vanishing trace. This result and its dual variant below will
be used for the extrapolated elliptic regularity result in \cref{sec:extr-ellipt-regul}.

\begin{proposition}[{Interpolation~\cite[Thm.~1.2]{BM19}}]
  \label{prop:interpolation-zero-trace}
  Let $p_i \in (1,\infty)$ and $s_i \in (\frac1{p_i},1+\frac1{p_i})$
  for $i=1,2$. Set
  $\frac1{p_\theta} = \frac{1-\theta}{p_0} + \frac{\theta}{p_1}$ and
  $s_\theta = (1-\theta)s_0 + \theta s_1$. Let further
  $E \subseteq \overline\Omega$ be a $(d-1)$-set. Then, up to
  equivalent norms, we have
  \begin{equation}\label{eq:complex-interpolation}
    \bigl[X^{s_0,p_0}_E(\Omega),X^{s_1,p_1}_E(\Omega)\bigr]_\theta =
    X^{s_\theta,p_\theta}_E(\Omega)
  \end{equation}
  and
  \begin{equation}\label{eq:real-interpolation}
    \bigl(X^{s_0,p_0}_E(\Omega),X^{s_1,p_1}_E(\Omega)\bigr)_{\theta,p_\theta} =
    W^{s_\theta,p_\theta}_E(\Omega),    
  \end{equation}
  with the following exceptions: if $s_\theta = 1$
  in~\eqref{eq:real-interpolation}, then we must already have
  $s_0=s_1=1$; moreover, $X=W$ is permitted
  in~\eqref{eq:complex-interpolation} only if either all or none of
  $s_0,s_1,s_\theta$ are 1.
\end{proposition}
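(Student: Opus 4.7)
The plan is to transfer the question from $\Omega$ to $\R^d$ via a retract/coretract argument, and then obtain the interpolation identities on $\R^d$ by viewing $X^{s,p}_E(\R^d)$ as the kernel of a trace operator that admits a bounded right inverse on $E$. Concretely, I would first try to construct a linear extension operator $\cE \colon X^{s,p}_E(\Omega) \to X^{s,p}_E(\R^d)$ which is bounded simultaneously for all relevant $(s,p)$, and which is a common right inverse to the restriction $r_\Omega \colon X^{s,p}_E(\R^d) \to X^{s,p}_E(\Omega)$. Given such an $\cE$, the standard retract principle reduces \eqref{eq:complex-interpolation} and \eqref{eq:real-interpolation} for $\Omega$ to the analogous identities on $\R^d$, since $r_\Omega \cE = \mathrm{id}$ and the projector $\cE r_\Omega$ interpolates.

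For the full-space scale I would invoke Jonsson--Wallin trace theory on the $(d-1)$-regular set $E$: the trace operator $\tr_E$ from \cref{prop:restriction-d-1} extends to a bounded surjection of $X^{s,p}(\R^d)$ onto a Besov-type space $B_{E}^{s - 1/p, p}(E)$, with a bounded linear right inverse (the Jonsson--Wallin Whitney-type extension) that is uniform in $(s,p)$ in the admissible strip. This yields the short exact split sequence
\begin{equation*}
0 \longrightarrow X^{s,p}_E(\R^d) \longrightarrow X^{s,p}(\R^d) \xrightarrow{\tr_E} B_E^{s-1/p,p}(E) \longrightarrow 0,
\end{equation*}
and since interpolation commutes with splittings, \eqref{eq:complex-interpolation} and \eqref{eq:real-interpolation} on $\R^d$ reduce to the known interpolation of $X^{s,p}(\R^d)$ (classical Bessel/Slobodetskii interpolation on the whole space) together with the corresponding Besov interpolation identities on $E$, which Jonsson--Wallin provide.

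Two steps will carry essentially all the work. The first is building the universal extension operator $\cE$ on the rough domain $\Omega$ (only \eqref{eq:ICTfull} and the Lipschitz charts near $\overline{\partial\Omega \setminus D}$ are available): a Rychkov-style wavelet/Whitney extension should be boundedly defined on the whole Bessel and Slobodetskii scale at once, but one has to cut it off carefully so that it preserves the kernel of $\tr_E$ for every admissible $(s,p)$ simultaneously, which is delicate because $E$ may lie anywhere in $\overline\Omega$ and may intersect the cone of influence of the Whitney cubes used to extend across the Lipschitz part. The second is dealing with the excluded cases: $W^{1,p}(\R^d)$ and $H^{1,p}(\R^d)$ differ for $p \neq 2$, so complex interpolation of Slobodetskii spaces across the integer value $s_\theta = 1$ cannot be expected, and the statement must be restricted accordingly; this is where I anticipate the main technical obstacle, since it forces one to distinguish the Bessel and Slobodetskii scales at the endpoints and track exactly when the Jonsson--Wallin trace spaces on $E$ coincide across $X \in \{H,W\}$.

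Finally, once the $\R^d$-level identities are in place, transporting them back to $\Omega$ via the retract given by $(\cE, r_\Omega)$ yields \eqref{eq:complex-interpolation} and \eqref{eq:real-interpolation} with equivalent norms. The fact that the right-hand sides are themselves of the form $X^{s_\theta,p_\theta}_E(\Omega)$ or $W^{s_\theta,p_\theta}_E(\Omega)$ then follows from the definition in \cref{def:func-spaces-domain} combined with the intrinsic characterisation of $W^{s,p}_E(\Omega)$ recorded in \cref{rem:intrinsic-slobodetskii}, which identifies the interpolated space with the correct quotient of a kernel of trace.
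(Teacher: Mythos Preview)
The paper does not prove this proposition: it is quoted verbatim from~\cite[Thm.~1.2]{BM19} and carries no proof in the present manuscript. So there is no ``paper's own proof'' to compare against; what you have written is in effect a sketch of how one might reprove the Bechtel--Egert result itself.

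That said, your outline is broadly in the right spirit---the actual argument in~\cite{BM19} also hinges on exhibiting $X^{s,p}_E(\R^d)$ as a complemented subspace of $X^{s,p}(\R^d)$ via an $(s,p)$-uniform projection $\cP$ (this is exactly what is invoked in the proof of \cref{cor:interpolation-dual} here), followed by a universal extension operator from $\Omega$ to $\R^d$ compatible with the trace condition. Your split exact sequence formulation is equivalent to this complemented-subspace picture. Where your sketch diverges from~\cite{BM19} is in the mechanism for the projection: Bechtel and Egert do not rely on a Jonsson--Wallin right inverse to $\tr_E$, but instead build $\cP$ through Hardy-type inequalities and a smooth cutoff adapted to the distance to $E$. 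Whether the Jonsson--Wallin extension is genuinely uniform in $(s,p)$ across the whole strip $s \in (\tfrac1p, 1+\tfrac1p)$, including both Bessel and Slobodetskii scales, would need careful checking.

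There is, however, one concrete error in your proposal. You write that ``$W^{1,p}(\R^d)$ and $H^{1,p}(\R^d)$ differ for $p \neq 2$''; this is false---the paper itself recalls at the start of \cref{sec:function-spaces} that $W^{k,p}(\R^d) = H^{k,p}(\R^d)$ for all integers $k \ge 0$. The genuine reason for the exceptional clauses is that for non-integer $s$ the Slobodetskii space $W^{s,p}$ coincides with the Besov space $B^s_{p,p}$, and real interpolation of two such spaces at an integer target $s_\theta = 1$ produces $B^1_{p,p}$, which is \emph{not} $W^{1,p}$ when $p \neq 2$. Likewise, complex interpolation of Besov-type spaces across an integer value does not land in the Bessel scale. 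So the obstruction lives in the mismatch between the Besov and Bessel/Sobolev scales at integer smoothness, not in any discrepancy between $W^{1,p}$ and $H^{1,p}$. You will want to correct this before building the endpoint analysis on it.
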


\begin{corollary}
  \label{cor:interpolation-dual}
  Adopt the assumptions of
  \cref{prop:interpolation-zero-trace}. Then, up to
  equivalent norms, we have
  \begin{equation*}
    \bigl[X^{-s_0,p_0}_E(\Omega),X^{-s_1,p_1}_E(\Omega)\bigr]_\theta =
    X^{-s_\theta,p_\theta}_E(\Omega)
  \end{equation*}
  and
  \begin{equation*}
    \bigl(X^{-s_0,p_0}_E(\Omega),X^{-s_1,p_1}_E(\Omega)\bigr)_{\theta,p_\theta} =
    W^{-s_\theta,p_\theta}_E(\Omega),    
  \end{equation*}
  with the exceptions as in
  \cref{prop:interpolation-zero-trace}.
\end{corollary}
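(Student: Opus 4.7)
The plan is to reduce the corollary to Proposition~\ref{prop:interpolation-zero-trace} by applying it at the conjugate integrability exponents and then dualising, using the classical duality theorems of Bergh--L\"ofstr\"om for the complex and real methods together with the definitional duality $X^{-s,p}_E(\Omega) = X^{s,p'}_E(\Omega)^\star$ from \cref{def:func-spaces-domain}.

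Concretely, set $\tilde p_i \defn p_i'$ for $i = 0,1$, so that $\tilde p_\theta = p_\theta'$. Applying \cref{prop:interpolation-zero-trace} with these exponents yields
\begin{equation*}
  \bigl[X^{s_0,\tilde p_0}_E(\Omega),X^{s_1,\tilde p_1}_E(\Omega)\bigr]_\theta = X^{s_\theta,\tilde p_\theta}_E(\Omega)
\end{equation*}
and the analogous real-method statement with $W$ on the right-hand side. Taking antilinear duals on both sides and invoking the standard duality results for complex and real interpolation (with parameter matching $(\tilde p_\theta)' = p_\theta$) produces
\begin{equation*}
  \bigl[X^{s_0,\tilde p_0}_E(\Omega)^\star,X^{s_1,\tilde p_1}_E(\Omega)^\star\bigr]_\theta = X^{s_\theta,\tilde p_\theta}_E(\Omega)^\star
\end{equation*}
and similarly for the real method with second parameter $(\tilde p_\theta)' = p_\theta$. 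Identifying duals via $X^{s_i,\tilde p_i}_E(\Omega)^\star = X^{-s_i,p_i}_E(\Omega)$ and $W^{s_\theta,\tilde p_\theta}_E(\Omega)^\star = W^{-s_\theta,p_\theta}_E(\Omega)$ then gives both asserted identities. Since the exceptional cases in \cref{prop:interpolation-zero-trace} only constrain the differentiability parameters $s_0,s_1,s_\theta$, which are untouched by the passage to conjugate integrability exponents, they transfer verbatim.

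The main obstacle is verifying the hypotheses of the interpolation duality theorems, namely reflexivity (for the complex case) and regularity of the couple, i.e.\ density of the intersection in each component. Reflexivity is inherited: $H^{s,p}(\R^d)$ and $W^{s,p}(\R^d)$ are reflexive for $p \in (1,\infty)$, and reflexivity passes to the restriction quotient spaces $X^{s,p}(\Omega)$ and to the closed subspaces $X^{s,p}_E(\Omega) = \ker \tr_E$. For regularity of the couple, I would observe that the intersection
\begin{equation*}
  X^{s_0,\tilde p_0}_E(\Omega) \cap X^{s_1,\tilde p_1}_E(\Omega)
\end{equation*}
contains (restrictions of) smooth functions with support in $\R^d \setminus E$, and such functions are dense in each $X^{s,p}_E(\Omega)$ for the parameter range considered here (cf.\ the density statements available in~\cite{BM19} in the same setting). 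Once regularity is in hand the remainder is pure bookkeeping of conjugate exponents.
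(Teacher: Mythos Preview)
Your approach is essentially the same as the paper's: apply \cref{prop:interpolation-zero-trace} at the conjugate exponents and invoke the duality theorems for the interpolation functors, after verifying density of the intersection and reflexivity. Two small points where the paper is more careful: first, the standard duality theorems (in Triebel or Bergh--L\"ofstr\"om) are stated for ordinary duals, whereas $X^{-s,p}_E(\Omega)$ is by definition the \emph{anti}-dual, so a one-line retraction-coretraction argument via $\psi \mapsto [f \mapsto \langle\psi,\overline f\rangle]$ is needed to bridge the gap; second, the paper obtains density of the intersection not from $C^\infty_E$ functions directly but from the $(s,p)$-uniform projection onto $X^{s,p}_E(\R^d)$ provided by~\cite[Lem.~3.1]{BM19}, which is a cleaner route since it reduces the question to the well-known density of $X^{s_0,p_0}(\R^d)\cap X^{s_1,p_1}(\R^d)$ in each factor. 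Also note that $X^{s,p}_E(\Omega)$ is a \emph{quotient} of the kernel $X^{s,p}_E(\R^d)$, not itself a kernel, so your reflexivity chain should read: complemented subspace of a reflexive space, then quotient thereof.
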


\begin{proof}
  The assertions follow from the corresponding ones in
  \cref{prop:interpolation-zero-trace} by general duality
  properties of the interpolation functors, see
  e.g.~\cite[Ch.~1.11.3]{Triebel}. Before we validate the assumptions
  there, let us note that the present corollary is an assertion about
  \emph{anti-dual} spaces, whereas the cited result is about ordinary
  dual spaces. However, we can recover the anti-dual case from the
  dual one by means of the retraction-coretraction
  theorem~(\cite[Ch.~1.2.4]{Triebel}) using the mapping
  $\psi \mapsto [f \mapsto \langle \psi,\overline f\rangle]$ both as
  the retraction and coretraction between anti-dual and dual space.

  Now let us turn to the assumptions in~\cite[Ch.~1.11.3]{Triebel}:
  First, $X^{s_0,p_0}_E(\Omega) \cap X^{s_1,p_1}_E(\Omega)$ is dense
  in $X^{s_i,p_i}_E(\Omega)$ for $i=1,2$. This can be seen as follows:
  For all $p \in (1,\infty)$ and $s \in (\frac1p,1+\frac1p)$, the
  spaces $X^{s,p}_E(\R^d)$ are complemented subspaces of
  $X^{s,p}(\R^d)$ by virtue of a $(s,p)$-uniform projection $\cP$ as
  shown in~\cite[Lem.~3.1]{BM19}. But
  $X^{s_0,p_0}(\R^d) \cap X^{s_1,p_1}(\R^d)$ is dense in
  $X^{s_i,p_i}(\R^d)$, hence
  $\cP\bigl(X^{s_0,p_0}(\R^d) \cap X^{s_1,p_1}(\R^d)\bigr) =
  X^{s_0,p_0}_E(\R^d) \cap X^{s_1,p_1}_E(\R^d)$ is dense in
  $X^{s_i,p_i}_E(\R^d)$. This then immediately transfers to density of
  $X^{s_0,p_0}_E(\Omega) \cap X^{s_1,p_1}_E(\Omega)$ in
  $X^{s_i,p_i}_E(\Omega)$.

  Moreover, the spaces $X^{s_i,p_i}_E(\Omega)$ are reflexive: They are
  factor spaces of $X^{s_i,p_i}_E(\R^d)$ which are reflexive because
  they are complemented subspaces of the reflexive spaces
  $X^{s_i,p_i}(\R^d)$ as already seen above.
\end{proof}

\subsection{Operators}\label{sec:operators}

Finally, let us define the elliptic operators in divergence form and
associated operators. We first establish the usual intrinsic norm on
$W^{1,p}_D(\Omega)$, which so far only carries the abstract quotient
norm inherited from $W^{1,p}_D(\R^d)$. For $E \subset \R^d$, let us
define
\begin{equation*}
  C_E^\infty(\R^d) \defn \Bigl\{f \in C_c^\infty(\R^d) \colon
  \dist(\supp f,E) > 0\Bigr\}, \quad \text{and} \quad
  C_E^\infty(\Omega) \defn C_E^\infty(\R^d)_{\restriction \Omega}.
\end{equation*}

\begin{lemma}[{\cite[Prop.~B.3]{BM19}}]
  \label{lem:sobolev-space-intrinsic}
  Let $p \in (1,\infty)$. Then
  \begin{equation*}
    \|f\|_{W^{1,p}(\Omega)}^* \defn \left(\|f\|_{L^p(\Omega)}^p +
      \|\nabla f\|_{L^p(\Omega)}^p\right)^{\frac1p}
  \end{equation*}
  is an equivalent, intrinsic norm on $W^{1,p}_D(\Omega)$. In fact,
  $W^{1,p}_D(\Omega)$ is the closure of $C_D^\infty(\Omega)$ in this
  norm.
\end{lemma}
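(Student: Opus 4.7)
The plan is to establish equivalence of norms by constructing a bounded extension operator $\mathcal{E} \colon W^{1,p}_D(\Omega) \to W^{1,p}_D(\R^d)$ with respect to the intrinsic norm on the domain, and then to transfer density of $C_D^\infty(\R^d)$ in $W^{1,p}_D(\R^d)$ to $\Omega$ via restriction.

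First I verify the easy estimate $\|f\|^*_{W^{1,p}(\Omega)} \leq \|f\|_{W^{1,p}_D(\Omega)}$: for $f = F|_\Omega$ with $F \in W^{1,p}_D(\R^d)$, restriction to $\Omega$ decreases both $L^p$ and gradient norms, so taking the infimum over admissible extensions gives the bound. For the reverse direction I construct $\mathcal{E}$ by localization. Cover $\overline\Omega$ by finitely many open sets of three types: (i) sets compactly contained in $\Omega$, where any localized piece $\eta f$ admits the trivial zero extension; (ii) the bi-Lipschitz chart neighborhoods around points of $\overline{\partial\Omega\setminus D}$ furnished by \cref{a-geometry}(ii), where flattening to a half-space followed by even reflection across $\{x_d = 0\}$ yields a bounded extension into $W^{1,p}$; (iii) a neighborhood meeting $\partial\Omega$ only inside $D$, where I extend by zero across $\partial\Omega$. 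Summing these local extensions via a subordinate partition of unity produces $\mathcal{E}f$, with the $W^{1,p}(\R^d)$-norm controlled by $\|f\|^*_{W^{1,p}(\Omega)}$.

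The main obstacle is case (iii): I must ensure that zero extension across $D$ lands in $W^{1,p}_D(\R^d)$ without generating singular distributional derivatives supported on $D$. Equivalently, $W^{1,p}_D(\Omega)$ must admit an intrinsic characterization by vanishing trace on $D$. This requires extracting a trace on $\partial\Omega$ from a function given only by its intrinsic $W^{1,p}(\Omega)$-data; here I invoke \cref{prop:restriction-d-1} together with \cref{r-d-1} (so that the whole boundary is $(d-1)$-regular) to obtain a well-defined trace, and then use the zero-trace-on-$D$ property inherited from the quotient definition of $W^{1,p}_D(\Omega)$. With this identification, standard arguments (e.g.\ difference quotient computations near $D$, using that $D$ is a $(d-1)$-set and so is not too ``thick'' in $\R^d$) show that zero extension preserves $W^{1,p}$-regularity and the vanishing trace condition.

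For the density statement, given $f \in W^{1,p}_D(\Omega)$, take the extension $F = \mathcal{E}f \in W^{1,p}_D(\R^d)$ constructed above, and approximate $F$ by a sequence $F_n \in C_D^\infty(\R^d)$ in the $W^{1,p}(\R^d)$-norm. Such sequences exist because $D$ is a $(d-1)$-set: one mollifies $F$ and multiplies by a cutoff vanishing in a shrinking neighborhood of $D$, with the cutoff error controlled by the vanishing trace of $F$ on $D$ together with a capacity estimate for $(d-1)$-sets. Restricting each $F_n$ to $\Omega$ gives a sequence in $C_D^\infty(\Omega)$ converging to $f$ in the intrinsic norm, since the intrinsic $\Omega$-norm is bounded by the $W^{1,p}(\R^d)$-norm of any extension. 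This simultaneously delivers equivalence of the two norms on $W^{1,p}_D(\Omega)$ (via the open mapping theorem or directly by the explicit extension) and the asserted density.
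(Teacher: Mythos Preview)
The paper does not prove this lemma; it is quoted directly from \cite[Prop.~B.3]{BM19}. Your outline captures the correct architecture --- a bounded extension operator built by localization, combined with density of $C_D^\infty$ --- and this is indeed the strategy one finds in \cite{BM19}. But the crux of the matter is your case~(iii), and the justification you give (``standard arguments, e.g.\ difference quotient computations near $D$'') does not suffice. Near interior points of $D$, \cref{a-geometry} imposes \emph{no} chart structure on $\partial\Omega$; only the thickness condition holds. For such rough boundaries there is no direct mechanism by which vanishing interior trace on $D$ alone forces the zero extension of $\eta f$ to lie in $W^{1,p}(\R^d)$ with norm controlled by the \emph{intrinsic} data of $\eta f$. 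Invoking \cref{prop:restriction-d-1} does not help here: that proposition concerns traces of functions already defined on $\R^d$, which is precisely what you are trying to construct.

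The missing ingredient is exactly the density statement you defer to the end. Once one knows that $C_D^\infty(\R^d)$ is dense in $W^{1,p}_D(\R^d)$ --- this is the genuinely nontrivial analytic input, and it is where the $(d-1)$-regularity of $D$ enters, via a Hardy-type inequality --- case~(iii) follows by approximation: any extension $F \in W^{1,p}_D(\R^d)$ of $\eta f$ is a $W^{1,p}$-limit of $\psi_n \in C_D^\infty(\R^d)$; on the ball $B$ each $\psi_n$ vanishes identically near $\partial\Omega\cap B \subset D$, so $\psi_n\chi_\Omega$ is smooth on $B$ with $\|\psi_n\chi_\Omega\|_{W^{1,p}(B)} = \|\psi_n\|^*_{W^{1,p}(\Omega\cap B)}$, and the limit gives $\mathcal E_0(\eta f) \in W^{1,p}(B)$ with the intrinsic bound. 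The cleaner organization --- the one \cite{BM19} adopts --- simply reverses your order: establish density of $C_D^\infty(\Omega)$ in the \emph{quotient} norm first (immediate from density on $\R^d$ by restriction), and then build the extension operator only on $C_D^\infty(\Omega)$. Such functions are supported away from $D$, so only your cases~(i) and~(ii) arise and the Lipschitz charts handle everything; case~(iii) never appears.
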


\begin{definition}[Coefficient functions]
  \label{def:coefficient-functions}
  Let $0< \rho_\bullet \leq \rho^\bullet$ be given. We define
  $\cC(\rho_\bullet,\rho^\bullet)$ to be the set of all measurable functions
  $\rho \colon \Omega \to \C^{d\times d}$ such that
  
  $\Re \xi^H\rho(x)\xi \geq \rho_\bullet \|\xi\|_2$ and
  $\|\rho(x)\|_{\cL(\C^{d} \to \C^{d})} \leq \rho^\bullet$ hold
  true for almost all $x \in \Omega$ and all $\xi \in \C^d$.
\end{definition}

From now on, whenever we refer to  $\cC(a,b)$ we
tacitly assume  $0< a \leq b$.

\begin{definition}[Second-order elliptic operator in divergence form]
  \label{def:div-grad}
  Let $\rho \in \cC(\rho_\bullet,\rho^\bullet)$. We define the second-order operator
  $-\nabla \cdot \rho \nabla$ by
  \begin{equation*}
    \bigl\langle -\nabla\cdot\rho\nabla u,v\bigr\rangle \defn
    \int_\Omega \rho\nabla u \cdot \overline{\nabla v}.
  \end{equation*}
  By the assumption on $\rho$, it is clear that
  $-\nabla \cdot\rho\nabla \in \cL(W^{1,p}_D(\Omega) \to
  W^{-1,p}_D(\Omega))$ for all $p \in (1,\infty)$, with the
  operator norm bounded by $\rho^\bullet$.
\end{definition}

\begin{rem}
  \label{rem:div-grad-norm}
  \begin{enumerate}[(i)]
  \item For $p=2$, based on
    \cref{lem:sobolev-space-intrinsic}, the Lax-Milgram
    lemma implies that $-\nabla\cdot\rho\nabla$ is continuously
    invertible whenever $\mathbf{1} \notin W^{1,2}_D(\Omega)$,
    and in this case the norm of the inverse is bounded by
    $\rho_\bullet^{-1}$.
  \item In connection with the previous point and the
    introduction with the elliptic regularity
    property~\eqref{eq:intro-assu}, let us point out that
    $-\nabla\cdot\rho\nabla$ will in general \emph{not} be
    surjective as an operator
    $W^{1,p}_D(\Omega) \to W^{-1,p}_D(\Omega)$ for $p\neq2$,
    even if $\mathbf{1} \notin W^{1,p}_D(\Omega)$. This is why
    often the maximal co-restriction to, say,
    $W^{-1,p}_D(\Omega)$ for $p>2$ of
    $-\nabla\cdot\rho\nabla \colon W^{1,2}_D(\Omega) \to
    W^{-1,2}_D(\Omega)$ is considered, as an unbounded operator
    in $W^{-1,p}_D(\Omega)$. We will however not need this
    distinction for this work.
  \end{enumerate}

\end{rem}

\begin{definition}[First-order operators]
  \label{def:first-order-term}
  Let $\beta \in L^\infty(\Omega;\C^d)$. We define the first-order
  operators $-\nabla\cdot \beta$ and $\beta \cdot \nabla$ by
  \begin{align*}
    \blangle -\nabla\cdot \beta u,v\brangle \defn \int_\Omega u \,
    \beta \cdot \overline{\nabla v} \quad \text{and} \quad  \blangle \beta \cdot \nabla u,v\brangle \defn \int_\Omega \beta\cdot\nabla u 
    \, \overline{v}.
  \end{align*}
  The operators give rise to
  continuous linear operators
  $W^{1,p}_D(\Omega) \to W^{-1,p}_D(\Omega)$ for every
  $p \in (1,\infty)$. This follows via Sobolev embedding.
\end{definition}

 We next introduce a suitable trace operator for functions
in $W^{s,p}(\Omega)$.

\begin{lemma}[{\cite[Thm.~8.7 (iii)]{BMMM}}]
  \label{lem:slobo-domain-trace-coincide}
  Let $p \in (1,\infty)$ and $s \in (\frac1p,1+\frac1p)$. Let
  $E \subseteq \overline\Omega$ be a $(d-1)$-set and consider
  $u \in W^{s,p}(\Omega)$. Then the \emph{inner trace} $\itr_E u$ given
  by
  \begin{equation*}
    (\itr_E u)(x) := \lim_{r \searrow 0} \frac {1}{|B_r(x) \cap
      \Omega|} \int_{{B_r(x) \cap \Omega}} u  \quad (x \in E)
  \end{equation*}
  is well defined and coincides with the trace of any
  $W^{s,p}(\R^d)$-extension of $u$, that is,
  $\itr_E u = \tr_E \widehat u$ for all $\widehat u \in W^{s,p}(\R^d)$
  such that $\widehat u_{\restriction \Omega} = u$.
\end{lemma}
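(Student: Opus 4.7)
The plan is to establish the identity $\itr_E u = \tr_E \widehat u$ for \emph{any} admissible extension $\widehat u$; well-definedness of $\itr_E u$ (including independence of the chosen extension) will then follow automatically. I would fix $u \in W^{s,p}(\Omega)$ and any $\widehat u \in W^{s,p}(\R^d)$ with $\widehat u_{\restriction \Omega} = u$; such an extension exists by the factor-space construction in \cref{def:func-spaces-domain}. By \cref{prop:restriction-d-1} the trace $\tr_E \widehat u$ belongs to $L^p(E;\cH_{d-1})$, and its defining limit $(\tr_E \widehat u)(x) = \lim_{r \searrow 0}|B_r(x)|^{-1}\int_{B_r(x)}\widehat u$ exists $\cH_{d-1}$-a.e.\ on $E$. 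Beyond this I would invoke the standard strengthening built into the Jonsson--Wallin theory: for $\cH_{d-1}$-a.e.\ $x \in E$,
\[
\lim_{r\searrow 0} \frac{1}{|B_r(x)|}\int_{B_r(x)}\bigl|\widehat u - (\tr_E\widehat u)(x)\bigr|^p = 0.
\]

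The key step is to compare the inner average with the full ball average via the interior thickness condition from \cref{a-geometry}. Since $E \subseteq \overline\Omega$ and $\Omega$ is $d$-thick, for every $x \in E$ and $r \in (0,1]$ one has $|B_r(x) \cap \Omega| \geq \gamma|B_r(x)|$---trivial for $x$ in the interior of $\Omega$ once $r$ is small enough, and a consequence of~\eqref{eq:ICTfull} together with \cref{rem:ahlfors-thickness} for $x \in \partial\Omega \cap E$. Combining this with the triangle and Jensen inequalities applied to $t \mapsto |t - (\tr_E \widehat u)(x)|^p$ yields
\[
\left|\frac{1}{|B_r(x) \cap \Omega|}\int_{B_r(x)\cap\Omega} \widehat u - (\tr_E \widehat u)(x)\right|^p \leq \frac{1}{\gamma |B_r(x)|}\int_{B_r(x)} \bigl|\widehat u - (\tr_E \widehat u)(x)\bigr|^p,
\]
and the right-hand side vanishes as $r \searrow 0$ for $\cH_{d-1}$-a.e.\ $x \in E$ by the Lebesgue-point property cited above.

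Since the average $|B_r(x) \cap \Omega|^{-1}\int_{B_r(x) \cap \Omega}\widehat u$ depends only on $\widehat u_{\restriction \Omega} = u$, the limit $(\itr_E u)(x)$ is determined by $u$ alone, and by the previous step it equals $(\tr_E \widehat u)(x)$ for $\cH_{d-1}$-a.e.\ $x \in E$ and every admissible extension $\widehat u$, which is exactly the claim. The main technical point is justifying the $L^p$-Lebesgue-point property for Jonsson--Wallin traces that drives the first paragraph; it is stronger than the pointwise convergence recorded in \cref{prop:restriction-d-1} and must be extracted from the proof of the restriction theorem in~\cite{JW84}. Should one prefer to avoid it, a density argument works as well: approximate $\widehat u$ in $W^{s,p}(\R^d)$ by smooth functions, for which both ball averages trivially converge to the pointwise value, and transfer the conclusion using continuity of $\tr_E$ together with the $\gamma^{-1}$-factor estimate above applied to the remainder.
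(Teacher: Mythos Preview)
The paper does not supply its own proof of this lemma; it is quoted verbatim from \cite[Thm.~8.7~(iii)]{BMMM}. Your reconstruction is correct and follows the standard line of that reference: the interior thickness condition bounds the inner-ball oscillation by the full-ball oscillation (your displayed inequality via Jensen and $|B_r(x)\cap\Omega|\ge\gamma|B_r(x)|$), and the $L^p$-Lebesgue-point property of Jonsson--Wallin traces on $(d-1)$-sets then forces the right-hand side to zero $\cH_{d-1}$-a.e. You correctly isolate this $L^p$-Lebesgue-point property as the nontrivial input---it is indeed stronger than the mere existence of the limit recorded in \cref{prop:restriction-d-1}, but it is part of the Jonsson--Wallin theory.

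One small caveat on your alternative density route: as written it does not actually sidestep the technical point. To pass from smooth approximants $v_n$ to $\widehat u$ you need $\cH_{d-1}$-a.e.\ control of the inner averages of the remainder $\widehat u - v_n$, and your $\gamma^{-1}$-estimate only dominates this by $\gamma^{-1}M(|\widehat u - v_n|^p)(x)$ with $M$ the Hardy--Littlewood maximal function. Turning this into an a.e.\ statement on $E$ requires exactly the maximal-function/capacity machinery that underlies the $L^p$-Lebesgue-point property in the first place, so the two approaches are essentially equivalent rather than genuinely independent.
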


We refer to \cref{rem:ahlfors-thickness} regarding the
assumption \emph{$d$-thick} in~\cite{BMMM}. In view of the foregoing
\cref{lem:slobo-domain-trace-coincide}, there will be no
ambiguity if we use the notation $\tr_E$ also for the interior trace
operator on $W^{s,p}(\Omega)$. We thus do so from now on.

\begin{corollary}
  \label{cor:slobo-domain-trace-op}
  Let $p \in (1,\infty)$ and $s > \frac1p$. Let
  $E \subseteq \overline\Omega$ be a $(d-1)$-set.
  \begin{enumerate}
  \item Let $sp < d$ and $s + \frac{d-1}q = \frac{d}p$. Then
    $\tr_E \colon W^{s,p}(\Omega) \to L^r(E;\cH_{d-1})$ is continuous
    for $r=q$ and even compact for $r \in [1,q)$.
  \item Let $sp > d$. Then
    $\tr_E \colon W^{s,p}(\Omega) \to L^\infty(E;\cH_{d-1})$ is compact.
  \end{enumerate}
\end{corollary}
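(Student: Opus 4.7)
The plan is to exploit \cref{lem:slobo-domain-trace-coincide} together with the quotient-space definition of $W^{s,p}(\Omega)$ to reduce every assertion to a norm estimate for $\tr_E \widehat u$ in terms of $\|\widehat u\|_{W^{s,p}(\R^d)}$ applied to an arbitrary extension $\widehat u \in W^{s,p}(\R^d)$ of $u$. From there the work takes place entirely in the classical full-space Sobolev/Besov theory.

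For the continuity assertion in part~(i) at $r = q$, the natural route is to chain two sharp ingredients: the Jonsson--Wallin trace theorem from~\cite{JW84}, which sends $W^{s,p}(\R^d)$ continuously into the Besov space $B^{s-1/p}_{p,p}(E;\cH_{d-1})$ on the $(d-1)$-regular set $E$, and the Sobolev-type embedding $B^{s-1/p}_{p,p}(E) \hookrightarrow L^q(E;\cH_{d-1})$, which on a $(d-1)$-Ahlfors regular set is valid precisely when $s - 1/p = (d-1)(1/p - 1/q)$. This rearranges exactly to $s + (d-1)/q = d/p$, and the hypothesis $sp < d$ keeps the Besov smoothness $s - 1/p$ below the critical threshold $(d-1)/p$, placing the embedding in the strictly subcritical range.

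For the compactness in part~(i) at $r < q$, the plan is to factor through a compact Sobolev embedding on $\Omega$. Pick $s'' \in (1/p, s)$ sufficiently close to $s$ and still with $s''p < d$, and let $q''$ be given by $s'' + (d-1)/q'' = d/p$. By construction $q'' < q$, yet $q'' > r$ provided $s''$ is taken close enough to $s$. Rellich--Kondrachov yields compactness of $W^{s,p}(\Omega) \hookrightarrow W^{s'',p}(\Omega)$—available here since the $d$-set property turns $\Omega$ into a Sobolev extension domain—the continuity part of~(i) applied to $(s'', p)$ sends $W^{s'',p}(\Omega)$ continuously into $L^{q''}(E;\cH_{d-1})$, and the boundedness of $E \subseteq \overline\Omega$ gives the final continuous step $L^{q''}(E) \hookrightarrow L^r(E)$. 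The composition is then compact.

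For part~(ii), where $sp > d$, the plan is to invoke Morrey's embedding $W^{s,p}(\Omega) \hookrightarrow C^{0,\alpha}(\overline{\Omega})$ with $\alpha = s - d/p$, which descends from $\R^d$ via the extension property. The trace operator is then simply restriction to $E$, obviously continuous into $L^\infty(E;\cH_{d-1})$, and Arzel\`a--Ascoli upgrades this to compactness because bounded sets in $C^{0,\alpha}(\overline{\Omega})$ are precompact in $C(\overline{\Omega})$, hence in $L^\infty(E;\cH_{d-1})$ after restriction. The main obstacle in the whole program is the endpoint at $r = q$ in~(i), which really hinges on the sharp Jonsson--Wallin trace theorem on Ahlfors-regular sets; the compactness parts and part~(ii) then follow by routine Sobolev embedding and Rellich--Kondrachov arguments.
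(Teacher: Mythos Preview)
Your plan is correct and largely parallels the paper's proof. Part~(ii) is identical in both: Morrey embedding into a H\"older class followed by Arzel\`a--Ascoli. For the continuity assertion in part~(i), the paper also factors through the Jonsson--Wallin trace machinery, citing~\cite[Thm.~6.8]{Bie09} together with~\cite[Thm.~V.1]{JW84}; your decomposition via $B^{s-1/p}_{p,p}(E)$ and a subsequent Sobolev-type embedding on the $(d-1)$-regular set is essentially the same route unpacked. Be aware, though, that the critical embedding $B^{s-1/p}_{p,p}(E)\hookrightarrow L^q(E;\cH_{d-1})$ on an Ahlfors-regular set is not entirely classical---this is precisely the content supplied by Biegert's paper, so you will want to point to~\cite{Bie09} (or derive it from the Jonsson--Wallin extension theorem for $E$) rather than treat it as folklore.

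The one genuine difference is the compactness argument in part~(i). The paper does \emph{not} use Rellich--Kondrachov on $\Omega$; instead it embeds $W^{s,p}(\R^d)\hookrightarrow H^{\alpha,p}(\R^d)$ for a suitable $\alpha<s$ and then invokes the compact trace result~\cite[Cor.~7.3]{Bie09} directly. Your route---compact embedding $W^{s,p}(\Omega)\hookrightarrow W^{s'',p}(\Omega)$ followed by the already-established continuous trace at the lower smoothness---is more elementary and self-contained, since it recycles part~(i) rather than appealing to a second black-box result. The paper's route is shorter on the page but leans more heavily on~\cite{Bie09}. Both are valid.

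One small omission: \cref{lem:slobo-domain-trace-coincide} is stated only for $s\in(\tfrac1p,1+\tfrac1p)$, so to cover all $s>\tfrac1p$ you need the reduction step the paper makes explicit (``it is sufficient to establish the claims for $s\in(\tfrac1p,1+\tfrac1p)$ due to Sobolev embedding''). This is routine but should be mentioned.
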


\begin{proof}
  There is a continuous extension operator
  $W^{s,p}(\Omega) \to W^{s,p}(\R^d)$ by~\cite[Thm.~VI.1]{JW84} since
  $\Omega$ is a $d$-set by assumption; cf.\ also
  \cref{rem:intrinsic-slobodetskii}.  It is sufficient to
  establish the claims for $s \in (\frac1p,1+\frac1p)$ due to Sobolev
  embedding. Thus, we can rely on
  \cref{lem:slobo-domain-trace-coincide} to derive the desired
  properties from the trace operator on the full space in this case.
  \begin{enumerate}[(i)] 
  \item It is sufficient to establish the continuity assertion
    for $r=q$. To this end, we combine~\cite[Thm.~6.8]{Bie09}
    with~\cite[Thm.~V.1]{JW84} applied to $E$. This
    shows that $\tr_E \colon W^{s,p}(\R^d) \to L^q(E;\cH_{d-1})$ is
    continuous.  Regarding compactness, let us note that if
    $r \in [1,q)$, then $s + \frac{d-1}r > \frac{d}p$, hence
    $(\frac{d-1}r-\frac{d}p,s) \neq \emptyset$. Choosing
    $\alpha$ from that interval, we have
    $W^{s,p}(\R^d) \embeds H^{\alpha,p}(\R^d)$ by classical
    embeddings. Now the proof of~\cite[Cor.~7.3]{Bie09} applies
    \emph{mutatis mutandis}.
  \item In this case, every function from $W^{s,p}(\R^d)$ admits a
    H\"older continuous bound\-ed representative by classical Sobolev
    embedding. The assertion follows from the Arzel\`{a}-Ascoli
    theorem. 
  \end{enumerate}
\end{proof}

With the foregoing \cref{cor:slobo-domain-trace-op}, the
following is well defined:

\begin{definition}
  \label{def:boundary-operator}
  Let $E \subseteq \overline\Omega$ be a $(d-1)$-set and let $\varrho
  \in L^\infty(E;\cH_{d-1})$. We define
  \begin{equation*}
    \blangle \tr_E^* \varrho \tr_E
    u,v\brangle \defn \int_E \varrho \, (\tr_E u) \,
    \overline{(\tr_E v)} \dd \cH_{d-1}.
  \end{equation*}
  The operators $\tr_E^* \varrho \tr_E$ define continuous
  linear operators $W^{1,p}_D(\Omega) \to W^{-1,p}_D(\Omega)$
  for every $p \in (1,\infty)$.
\end{definition}

We next put all the above defined operators to work for our main result.

\section{Extrapolation of elliptic regularity}\label{sec:extr-ellipt-regul}

In this section, we establish the main result,
\cref{thm:invert-extrapol}.  We first quote the Sneiberg
theorem in a quantitative version from~\cite[Appendix]{AuschEg}. It is
the abstract result which will allow us to extrapolate the isomorphism
property.

\begin{theorem}[Quantitative Sneiberg]
  \label{thm:sneiberg}
  Let $(X_0, X_1)$ and $(Y_0,Y_1)$ be interpolation couples of Banach
  spaces, and let $A$ be a continuous linear operator satisfying
  $A \in \cL(X_0\to Y_0) \cap \cL(X_1 \to Y_1)$. Then the set
  \begin{equation*}
    \cI(A) \defn\Bigl\{\theta \in (0,1) \colon A \in
    \cLiso\bigl([X_0,Y_0]_{\theta}\to[X_1,Y_1]_{\theta}\bigr)\Bigr
    \} 
  \end{equation*}
  is an open interval. In fact, suppose that $\bar\theta \in \cI(A)$
  and consider $\kappa > 0$ such that
  \begin{equation*}
    \|Ax\|_{[X_1,Y_1]_{\bar\theta}} \geq \kappa \|x\|_{[X_0,Y_0]_{\bar\theta}}
    \quad \text{for all}~x\in [X_0,Y_0]_{\bar\theta}.
  \end{equation*}
  Then \begin{equation}
    \label{eq:sneiberg-interval}
    \bigl|\theta-\bar\theta\bigr| \leq
    \frac{\kappa\max\bigl(\bar\theta,1-\bar\theta\bigr)}{6\kappa + 12
      \max\bigl(\|A\|_{ \cL(X_0;Y_0)},\|A\|_{\cL(X_1;Y_1)}\bigr)}
  \end{equation}
  implies that $\theta \in \cI(A)$ with
  $\|A^{-1}\|_{[X_1,Y_1]_\theta\to[X_0,Y_0]_\theta} \leq
  8\kappa^{-1}$. 
\end{theorem}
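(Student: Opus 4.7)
The plan is to work in the Calder\'on complex interpolation framework: every element $x \in [X_0,X_1]_\theta$ can be represented as $f(\theta)$ for some $f$ in the Calder\'on space $\cE(X_0,X_1)$ of bounded continuous functions on the closed strip $\overline S$, $S \defn \{z \in \C : 0 < \Re z < 1\}$, holomorphic on $S$ and satisfying $f(j + i \cdot) \in L^\infty(\R;X_j)$ for $j=0,1$, equipped with the norm $\|x\|_{[X_0,X_1]_\theta} = \inf\{\|f\|_{\cE(X_0,X_1)} : f(\theta) = x\}$. Composition with $A$ maps $\cE(X_0,X_1) \to \cE(Y_0,Y_1)$ continuously with norm at most $M \defn \max(\|A\|_{\cL(X_0;Y_0)}, \|A\|_{\cL(X_1;Y_1)})$. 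The qualitative fact that $\cI(A)$ is open is the classical Sneiberg theorem; estimate~\eqref{eq:sneiberg-interval} amounts to tracking constants in its proof, and the interval structure of $\cI(A)$ then follows because the argument produces a connected neighborhood of every point of $\cI(A)$.

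The main content is to show that for $\theta$ satisfying~\eqref{eq:sneiberg-interval}, $A$ is a bijection $[X_0,X_1]_\theta \to [Y_0,Y_1]_\theta$ with $\|A^{-1}\| \leq 8\kappa^{-1}$. I would do this via a Newton-type iteration anchored at $\bar\theta$, exploiting that $A_{\bar\theta}^{-1}$ exists with norm at most $\kappa^{-1}$ by the assumed lower bound. Given $y \in [Y_0,Y_1]_\theta$, pick a near-optimal representative $g \in \cE(Y_0,Y_1)$ with $g(\theta) = y$, set $x_0 \defn A_{\bar\theta}^{-1} g(\bar\theta)$, and choose a near-optimal representative $f_0 \in \cE(X_0,X_1)$ with $f_0(\bar\theta) = x_0$. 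The residual $g_1 \defn g - A f_0 \in \cE(Y_0,Y_1)$ then vanishes at $\bar\theta$; iterating with $g_1$ in place of $g$ and summing produces a candidate preimage $x \defn \sum_n f_n(\theta) \in [X_0,X_1]_\theta$ of $y$.

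The quantitative engine is the following three-lines estimate: if $h \in \cE(Y_0,Y_1)$ vanishes at $\bar\theta$, then
\begin{equation*}
  \|h(\theta)\|_{[Y_0,Y_1]_\theta} \leq c \, \frac{|\theta - \bar\theta|}{\max(\bar\theta, 1 - \bar\theta)} \, \|h\|_{\cE(Y_0,Y_1)}
\end{equation*}
for a universal constant $c$, proved by applying Phragm\'en--Lindel\"of on the strip to $h(z)/\psi(z)$, where $\psi$ is a holomorphic weight vanishing to first order at $\bar\theta$ and bounded below on the boundary lines $\{\Re z = 0, 1\}$. Applied inductively to the residuals, this yields geometric decay with ratio essentially $\frac{cM}{\kappa} \cdot \frac{|\theta - \bar\theta|}{\max(\bar\theta, 1-\bar\theta)}$; the series converges precisely when that ratio is $<1$, which rearranges to~\eqref{eq:sneiberg-interval}, while the geometric sum gives $\|A^{-1}\| \leq 8\kappa^{-1}$. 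Injectivity at $\theta$ (equivalently, the matching lower bound on $A$) follows by running the same iteration in reverse, constructing a representative of $A^{-1}y$ from one of $y$.

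The main obstacle is the calibration of constants in the three-lines step: pinning down the optimal weight $\psi$ and tracking the iteration carefully enough to recover the explicit numerical factors $6$, $12$, and $8$ demands an essentially explicit optimization on the strip. The conceptual backbone --- Calder\'on representation combined with a Newton iteration anchored at $\bar\theta$ --- is classical and transparent, but the sharp quantitative form~\eqref{eq:sneiberg-interval} is the nontrivial refinement contributed by~\cite{AuschEg}.
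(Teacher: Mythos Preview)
The paper does not prove this theorem; it is quoted without proof from~\cite[Appendix]{AuschEg}. Your sketch correctly outlines the argument given there: represent elements via the Calder\'on space $\cE$, run a Newton-type iteration anchored at $\bar\theta$ using the known inverse $A_{\bar\theta}^{-1}$, and control the residuals by a quantitative three-lines lemma for $\cE$-functions vanishing at $\bar\theta$, yielding geometric decay with ratio proportional to $|\theta-\bar\theta|/\max(\bar\theta,1-\bar\theta)$. Your closing remark that the explicit constants $6$, $12$, $8$ come from a careful calibration of the weight $\psi$ and the iteration bookkeeping is exactly right, and is precisely what~\cite{AuschEg} carries out. There is thus nothing in the present paper to compare against beyond the citation, and your outline is faithful to the cited source.
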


Of course, $\cI(A)$ in \cref{thm:sneiberg} can be empty. Since
the Slobotedskii scale is obtained by \emph{real} interpolation,
see~\eqref{eq:real-interpolation}, we also give the following
corollary to \cref{thm:sneiberg} considering the real
interpolation scale.

\begin{corollary} \label{cor:real-invert} Adopt the setting of
  \cref{thm:sneiberg}. Then
  \begin{equation*}
    \cI(A) \subseteq \Bigl\{\theta \in (0,1) \colon A \in
    \cLiso\bigl((X_0,Y_0)_{\theta,q}\to(X_1,Y_1)_{\theta,q}\bigr)\Bigr
    \}
  \end{equation*}
  for all $q \in [1,\infty]$.
\end{corollary}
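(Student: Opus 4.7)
The plan is to combine the openness of $\cI(A)$ granted by \cref{thm:sneiberg} with the reiteration theorem of interpolation theory. The key observation is that a real interpolation space at parameter $\theta$ can be realized as a real interpolation between two complex interpolation spaces at parameters $\theta_0 < \theta < \theta_1$, both of which we can arrange to lie in $\cI(A)$.

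Fix $\theta \in \cI(A)$. Since $\cI(A)$ is an open interval, we may pick $\theta_0,\theta_1 \in \cI(A)$ with $\theta_0 < \theta < \theta_1$, and then choose $\eta \in (0,1)$ satisfying $(1-\eta)\theta_0 + \eta\theta_1 = \theta$. The reiteration theorem for real interpolation (see, e.g., Bergh--L\"ofstr\"om or Triebel's interpolation monographs) applies because $[X_0,X_1]_{\theta_i}$ is of class $\mathcal{C}(\theta_i)$---that is, simultaneously of $\mathcal{J}$- and $\mathcal{K}$-class $\theta_i$---with respect to the couple $(X_0,X_1)$ for $i=0,1$; it yields
\[
  \bigl([X_0,X_1]_{\theta_0},\,[X_0,X_1]_{\theta_1}\bigr)_{\eta,q} = (X_0,X_1)_{\theta,q}
\]
up to equivalent norms, and the analogous identity holds for the $Y$-couple.

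Since $\theta_0,\theta_1 \in \cI(A)$, both $A$ and $A^{-1}$ are bounded as operators $[X_0,X_1]_{\theta_i} \to [Y_0,Y_1]_{\theta_i}$ and $[Y_0,Y_1]_{\theta_i} \to [X_0,X_1]_{\theta_i}$, respectively, for $i=0,1$. Applying the bounded real interpolation functor $(\cdot,\cdot)_{\eta,q}$ to each of $A$ and $A^{-1}$ then produces, via the identity above, bounded linear operators between $(X_0,X_1)_{\theta,q}$ and $(Y_0,Y_1)_{\theta,q}$ that coincide with $A$ and $A^{-1}$ on the dense intersection $X_0 \cap X_1$ (resp.\ $Y_0 \cap Y_1$). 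Being pointwise mutual inverses on a dense subset, they are globally inverses, establishing the inclusion.

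The principal technical point is a careful invocation of the mixed real/complex form of the reiteration theorem, together with the verification that $[X_0,X_1]_{\theta_i}$ lies in the class $\mathcal{C}(\theta_i)$; both are standard facts but deserve explicit citation. The full range $q \in [1,\infty]$ presents no additional difficulty since the argument nowhere invokes duality.
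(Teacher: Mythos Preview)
Your argument is essentially the same as the paper's: pick two parameters in the open interval $\cI(A)$ straddling $\theta$, invoke the mixed complex/real reiteration theorem (the paper cites \cite[Thm.~1.10.3.2]{Triebel}), and interpolate both $A$ and its inverse. One small caveat: your density argument for identifying the interpolated inverse with $A^{-1}$ is slightly awkward for $q=\infty$, where $Y_0\cap Y_1$ need not be dense in $(Y_0,Y_1)_{\theta,\infty}$, and it tacitly presupposes that the inverses $A^{-1}$ at $\theta_0$ and $\theta_1$ agree on $[Y_0,Y_1]_{\theta_0}\cap[Y_0,Y_1]_{\theta_1}$; this consistency does hold (it is part of what the quantitative Sneiberg theorem delivers), and once granted, $A^{-1}$ is a bona fide morphism of the intermediate couple and the interpolated composite $A\circ A^{-1}$ is the interpolation of the identity---no density needed.
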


\begin{proof}
  Let $\theta \in \cI(A)$. Since $\cI(A)$ is open by
  \cref{thm:sneiberg}, we can choose $\tau,\sigma \in \cI(A)$
  and $\lambda \in (0,1)$ such that
  $\theta=(1-\lambda) \tau + \lambda \sigma$. Then
  \begin{equation*} A \colon
    \bigl([X_0,X_1]_\tau,[X_0,X_1]_\sigma\bigr)_{\lambda,q} \to
    \bigl([Y_0,Y_1]_\tau,[Y_0,Y_1]_\sigma\bigr)_{\lambda,q}
  \end{equation*}
  remains continuously invertible for all $q \in [1,\infty]$. But, by
  re-iteration, the space on the left hand side is
  $(X_0,X_1)_{(1-\lambda) \tau + \lambda \sigma,q} =
  (X_0,X_1)_{\theta,q}$, and the one on the right hand side is
  $(Y_0,Y_1)_{(1-\lambda) \tau + \lambda \sigma,q} =
  (Y_0,Y_1)_{\theta,q}$, cf.~\cite[Thm.~1.10.3.2]{Triebel}.
\end{proof}

Our next intermediate goal is to extend the gradient
$\nabla \colon H^{1,p}(\Omega) \to L^p(\Omega)^d$ continuously to a
mapping $H^{1-s,p}(\Omega) \to H^{-s,p}(\Omega)^d$. This will then
allow to also extend the elliptic operator $-\nabla\cdot\rho\nabla$,
cf.\ \cref{lem:div-grad-extend} below. To this end, we
first quote the following result regarding continuity of the zero
extension in the low regularity regime. (See \cref{r-d-1} to
validate its assumptions.)

\begin{lemma}[{\cite[Cor.~2.18]{BM19}}]
  \label{lem:zero-extension}
  Let $p \in (1,\infty)$ and $s \in [0,\frac1p)$. Then the zero
  extension
  \begin{equation*}
    (\cE_0f)(x) =
    \begin{cases}
      f(x) & \text{if}~x \in \Omega, \\ 0 & \text{otherwise}
    \end{cases}
  \end{equation*}
  is a continuous linear operator
  $\cE_0 \colon X^{s,p}(\Omega) \to X^{s,p}(\R^d)$ for both $X=H$ or
  $W$.
\end{lemma}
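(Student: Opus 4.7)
The statement is the expected fact that zero-extension is bounded in the regularity range below the trace threshold, since there is then no trace condition to enforce. The plan is to split the argument by scale: handle $s = 0$ trivially (where $\cE_0 \colon L^p(\Omega) \to L^p(\R^d)$ is an isometry), treat the Slobodetskii scale $X = W$ by a direct seminorm computation, and handle the Bessel scale $X = H$ through a pointwise multiplier viewpoint.

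For the Slobodetskii case I would use \cref{rem:intrinsic-slobodetskii}, available thanks to~\eqref{eq:ICTfull}, to pass to the intrinsic double-integral seminorm on $W^{s,p}(\Omega)$, and then expand the corresponding seminorm of $\cE_0 u$ on $\R^d \times \R^d$ into the three contributions over $\Omega \times \Omega$, $\Omega^c \times \Omega^c$, and twice the cross region. The first reproduces the intrinsic seminorm of $u$ and the second vanishes since $\cE_0 u \equiv 0$ off $\Omega$. For the cross piece
\begin{equation*}
  \iint_{\Omega \times \Omega^c} \frac{|u(x)|^p}{|x-y|^{d+sp}} \dx \dy
\end{equation*}
a polar-coordinate bound on the inner $y$-integral ($\int_{\Omega^c} |x-y|^{-d-sp} \dy \leq C \dist(x,\partial\Omega)^{-sp}$) reduces matters to the weighted estimate
\begin{equation*}
  \int_\Omega |u(x)|^p \dist(x,\partial\Omega)^{-sp} \dx \leq C \|u\|_{W^{s,p}(\Omega)}^p,
\end{equation*}
which is a fractional Hardy inequality on $d$-thick domains, valid precisely in the subcritical range $sp < 1$ assumed in the statement.

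For the Bessel case I would adopt the multiplier perspective: by the factor-space definition of $H^{s,p}(\Omega)$, every $u \in H^{s,p}(\Omega)$ has a representative $U \in H^{s,p}(\R^d)$ with $\|U\|_{H^{s,p}(\R^d)} \leq 2\|u\|_{H^{s,p}(\Omega)}$, and $\cE_0 u = \chi_\Omega U$. The claim therefore reduces to showing that multiplication by $\chi_\Omega$ is bounded on $H^{s,p}(\R^d)$ for $s \in [0,\tfrac1p)$, with multiplier norm controlled by $d$, $p$, $s$ and the $d$-thickness constant of $\Omega$. This is the Bessel analogue of the Hardy inequality above and is the principal technical obstacle, since the boundary is only $(d-1)$-regular; the strict inequality $sp < 1$ is exactly what prevents $\chi_\Omega$ from ``seeing'' the boundary discontinuity and is the reason for the threshold in the statement. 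Once this multiplier bound and the fractional Hardy inequality are in hand, both assertions follow directly, with the constants inherited from these two inputs.
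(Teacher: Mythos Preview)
The paper does not give its own proof of this lemma: it is quoted verbatim from~\cite[Cor.~2.18]{BM19}, so there is nothing in the present paper to compare your argument against at the level of detail. Your sketch is a correct outline of how one proves such a statement from scratch. The reduction of the $W$-case to a fractional Hardy inequality $\int_\Omega |u|^p \dist(\cdot,\partial\Omega)^{-sp} \lesssim \|u\|_{W^{s,p}(\Omega)}^p$ under~\eqref{eq:ICTfull} and $sp<1$ is standard and valid, and your reduction of the $H$-case to the multiplier property of $\chi_\Omega$ on $H^{s,p}(\R^d)$ is exactly the viewpoint the paper itself endorses in \cref{rem:multipliers}, where it notes that~\cite[Thm.~4.4]{sickel} already provides \cref{lem:zero-extension} via this route. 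So your approach is sound and in fact aligned with the paper's own commentary; the only caveat is that you defer the actual proof of the $\chi_\Omega$-multiplier bound on the Bessel scale to an unspecified source, which is precisely the nontrivial content supplied by~\cite{BM19} (or~\cite{sickel}) and not something you have reproved.
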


\begin{lemma}
  \label{lem:density-ccinfty-low-reg}
  Let $p \in (1,\infty)$ and $s \in [0,\frac1p)$. Then
  $C_{\partial\Omega}^\infty(\Omega)$ is dense in $H^{s,p}(\Omega)$.
\end{lemma}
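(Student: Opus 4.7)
The plan is to reduce the density question to $\R^d$ via the continuous zero extension $\cE_0 \colon H^{s,p}(\Omega) \to H^{s,p}(\R^d)$ furnished by \cref{lem:zero-extension} (which applies precisely because $s < 1/p$). Given $u \in H^{s,p}(\Omega)$, set $v \defn \cE_0 u \in H^{s,p}(\R^d)$, which is supported in $\overline\Omega$ and satisfies $v_{\restriction \Omega} = u$. Since the restriction map $H^{s,p}(\R^d) \to H^{s,p}(\Omega)$ is continuous by the very definition of the quotient norm, it suffices to produce a sequence $\phi_n \in C_c^\infty(\R^d)$ with $\dist(\supp \phi_n, \partial\Omega) > 0$ such that $\phi_n \to v$ in $H^{s,p}(\R^d)$; the restrictions $\phi_n|_\Omega$ then belong to $C^\infty_{\partial\Omega}(\Omega)$ and converge to $u$ in $H^{s,p}(\Omega)$.

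To construct such $\phi_n$ I would combine a cutoff with a mollification. Pick smooth cutoffs $\eta_k \in C_c^\infty(\R^d)$ with $0 \leq \eta_k \leq 1$, identically equal to one on $\{x : \dist(x,\partial\Omega) > 2/k\}$, and supported in $\{x : \dist(x,\partial\Omega) > 1/k\}$, so that $v_k \defn \eta_k v$ is compactly supported inside $\Omega$. Mollifying $v_k$ with a standard mollifier at scale $\eps_k < 1/(2k)$ then produces $\phi_k \in C_c^\infty(\R^d)$ with $\dist(\supp \phi_k, \partial\Omega) > 0$, and $\phi_k \to v_k$ in $H^{s,p}(\R^d)$ as $\eps_k \to 0$ by standard mollifier properties. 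A diagonal argument reduces the claim to showing $v_k \to v$ in $H^{s,p}(\R^d)$, i.e., $\|(1-\eta_k)v\|_{H^{s,p}(\R^d)} \to 0$ as $k \to \infty$.

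The main obstacle is precisely this last convergence. The $L^p$-part vanishes by dominated convergence since the support of $(1-\eta_k)v$ lies in the shrinking layer $\overline\Omega \cap \{\dist(\cdot,\partial\Omega) \leq 2/k\}$, but the fractional contribution is subtle because $|\nabla \eta_k| \sim k$ grows unboundedly, and naive commutator estimates diverge. The key input is that $s < 1/p$ together with $\partial\Omega$ being a $(d-1)$-set (\cref{r-d-1}) renders $\partial\Omega$ of null $(s,p)$-capacity, which provides a fractional Hardy-type estimate of the form $\int_\Omega |w(x)|^p \dist(x,\partial\Omega)^{-sp} \dx \lesssim \|w\|_{H^{s,p}(\R^d)}^p$ for $w \in H^{s,p}(\R^d)$ supported in $\overline\Omega$. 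This exactly absorbs the growing scale of the commutator terms generated by $\eta_k$ and forces the $H^{s,p}(\R^d)$-norm of $(1-\eta_k)v$ to vanish. Alternatively, the argument can be short-circuited by appealing to the density results for zero-trace Bessel potential scales from~\cite{BM19}, which under the assumption $s < 1/p$ degenerate to ordinary no-trace spaces and supply the required approximation of $v$ by elements of $C_c^\infty(\Omega)$ in $H^{s,p}(\R^d)$ directly.
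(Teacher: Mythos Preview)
Your reduction via the zero extension $\cE_0$ is exactly the paper's first step. The paper then finishes in one line by invoking Netrusov's theorem~\cite[Thm.~10.1.1]{Adams_Hedberg}: any $H^{s,p}(\R^d)$ function vanishing on $\R^d\setminus\Omega$ lies in the $H^{s,p}$-closure of $C_c^\infty(\Omega)$, which immediately gives the claim.

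Your route instead sketches a hands-on proof of this approximation via cutoff, mollification, and a fractional Hardy inequality. The idea is sound: for $sp<1$ and $\partial\Omega$ a $(d-1)$-set, $\partial\Omega$ has null $(s,p)$-capacity, and the resulting Hardy-type control $\|v\,\dist(\cdot,\partial\Omega)^{-s}\|_{L^p}\lesssim\|v\|_{H^{s,p}}$ is precisely what absorbs the $k^{sp}$ blow-up of the commutator term in $\|(1-\eta_k)v\|$. Carrying this out rigorously for the Bessel scale $H^{s,p}$ (as opposed to the Slobodetskii seminorm, where the splitting is transparent) takes some extra care, but your fallback of citing the density results in~\cite{BM19} closes this cleanly. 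In effect you are re-deriving the special case of Netrusov's theorem needed here; the paper's citation is shorter, while your argument is more self-contained and makes explicit why the regime $s<1/p$ is the right one.
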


\begin{proof}
  It is enough to show that $H^{s,p}(\Omega)$ is a subset of the
  closure $H^{s,p}_0(\Omega)$ of $C_{\partial\Omega}^\infty(\Omega)$ in the
  $H^{s,p}(\Omega)$ norm. Let $f \in
  H^{s,p}(\Omega)$. \cref{lem:zero-extension} asserts that
  $\cE_0f \in H^{s,p}(\R^d)$. Clearly, $\cE_0f = 0$ on
  $\R^d\setminus\Omega$. A theorem of
  Netrusov~(\cite[Thm.~10.1.1]{Adams_Hedberg}) thus implies that
  $f\in H^{s,p}_0(\Omega)$.
\end{proof}

\begin{lemma}
  \label{lem:gradient-extension}
  Let $p \in (1,\infty)$ and $s \in (0,\frac1p \wedge
  \frac1{p'})$. Then the weak gradient
  $\nabla \in \cL(H^{1,p}(\Omega)\to L^p(\Omega)^d)$ maps
  $H^{1+s,p}(\Omega)$ continuously nonexpansively into
  $H^{s,p}(\Omega)^d$ and admits a unique continuous linear and still
  nonexpansive extension to a mapping
  $\nabla \colon H^{1-s,p}(\Omega) \to H^{-s,p}(\Omega)^d$.
\end{lemma}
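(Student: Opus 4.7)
The plan is to transfer the Fourier-multiplier action of $\nabla$ on the Bessel potential scale on $\R^d$ to $\Omega$ via the quotient (restriction) definition of the $H^{\cdot,p}(\Omega)$-spaces, using duality and integration by parts for the negative-index extension.

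For the forward part, I would take any $u\in H^{1+s,p}(\Omega)$ and an extension $\tilde u\in H^{1+s,p}(\R^d)$. By locality of the weak gradient, $(\nabla\tilde u)|_\Omega=\nabla u$, and the standard $\R^d$-side Fourier multiplier estimate gives $\nabla\tilde u\in H^{s,p}(\R^d)^d$ with the appropriate norm control; taking the infimum over $\tilde u$ passes the bound to the quotient norms on $\Omega$. For the extension direction, I would first note that $H^{1,p}(\Omega)\embeds H^{1-s,p}(\Omega)$ densely (restrictions of $C_c^\infty(\R^d)$-functions lie in $H^{1,p}(\Omega)$ and are dense in $H^{1-s,p}(\Omega)$ as a factor space), whence uniqueness of the continuous extension is automatic, and it suffices to estimate $\|\nabla u\|_{H^{-s,p}(\Omega)^d}$ in terms of $\|u\|_{H^{1-s,p}(\Omega)}$ for $u\in H^{1,p}(\Omega)$.

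For this estimate I would use $H^{-s,p}(\Omega)^d=(H^{s,p'}(\Omega)^d)^\star$ together with density of $C_{\partial\Omega}^\infty(\Omega)^d$ in $H^{s,p'}(\Omega)^d$ from \cref{lem:density-ccinfty-low-reg} (applicable since $s<1/p'$). For $\phi\in C_{\partial\Omega}^\infty(\Omega)^d$, integration by parts on $\Omega$ yields $\int_\Omega\nabla u\cdot\overline\phi=-\int_\Omega u\,\overline{\dive\phi}$; with $\tilde u\in H^{1-s,p}(\R^d)$ any extension of $u$, the support of $\cE_0\phi\in C_c^\infty(\R^d)^d$ lies in $\overline\Omega$ where $\tilde u=u$ a.e., so
\begin{equation*}
\int_\Omega\nabla u\cdot\overline\phi=-\int_{\R^d}\tilde u\,\overline{\dive\cE_0\phi}=\blangle\nabla\tilde u,\cE_0\phi\brangle_{H^{-s,p}(\R^d)^d\times H^{s,p'}(\R^d)^d}.
\end{equation*}
Then the full-space continuity of $\nabla\colon H^{1-s,p}(\R^d)\to H^{-s,p}(\R^d)^d$, \cref{lem:zero-extension} for $\cE_0\phi$, and the infimum over $\tilde u$ produce the bound $\|u\|_{H^{1-s,p}(\Omega)}\|\phi\|_{H^{s,p'}(\Omega)^d}$; density of test functions extends this to all of $H^{s,p'}(\Omega)^d$.

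The key subtle point is the bridging identity $\int_\Omega u\,\overline{\dive\phi}=\int_{\R^d}\tilde u\,\overline{\dive\cE_0\phi}$: it hinges on $\cE_0\phi$ being supported in $\overline\Omega$, so that the agreement $\tilde u=u$ on $\Omega$ suffices to cancel any contribution from $\R^d\setminus\Omega$. This is precisely why test functions must vanish near $\partial\Omega$, hence why \cref{lem:density-ccinfty-low-reg} and the condition $s<1/p'$ enter. The symmetric condition $s<1/p$ ensures continuity of the companion operation $\cE_0\colon H^{s,p}(\Omega)\to H^{s,p}(\R^d)$, which together with matching norm conventions yields the claimed nonexpansiveness; the remaining work is only to verify that each ingredient—the full-space Fourier multiplier, $\cE_0$, and the quotient-norm infimum—composes without norm degradation.
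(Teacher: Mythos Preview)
Your proposal is correct and follows essentially the same route as the paper: the full-space contraction property of $\partial_j$ on Bessel potential spaces is transferred to $\Omega$ via the quotient definition for the forward part, and for the extension one pairs against $C_{\partial\Omega}^\infty(\Omega)$ test functions (dense by \cref{lem:density-ccinfty-low-reg}), zero-extends them to $\R^d$ via \cref{lem:zero-extension}, and invokes the full-space bound after taking the infimum over extensions. The only cosmetic difference is that the paper defines the extended operator directly on all of $H^{1-s,p}(\Omega)$ through the distributional pairing formula, whereas you first establish the norm bound on the dense subspace $H^{1,p}(\Omega)$ and then extend by density---the ingredients are identical.
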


\begin{proof}
  The proof is based on the observation that the distributional
  (partial) derivative $\partial_j$, $j \in \{1,\dots,d\}$, is a
  continuous linear contraction from $H^{\sigma,q}(\R^d)$ to
  $H^{\sigma-1,q}(\R^d)$ for all $\sigma \in \R$ and all
  $q \in (1,\infty)$. This in turn can be seen e.g.\ for $\sigma$ an
  integer via $H^{k,q}(\R^d) = W^{k,q}(\R^d)$ for $k \in \N_0$ and a
  duality argument; the general case for $\sigma$ then follows by
  interpolation. Moreover, this distributional derivative is of course
  consistent with the weak derivative on $H^{1,q}(\R^d)$.
  
  The first claim thus follows immediately from the definitions of
  $H^{1+s,p}(\Omega)$ and $H^{s,p}(\Omega)$ as the restrictions of the
  corresponding spaces on $\R^d$. For the second one, consider
  $f \in H^{1-s,p}(\Omega)$ and let $\widehat f \in H^{1-s,p}(\R^d)$
  be such that $\widehat{f}_{\restriction \Omega} = f$. Let moreover
  $\varphi \in C_{\partial\Omega}^\infty(\Omega)$ and identify it with its extension
  by zero $\cE_0 \varphi$ to $\R^d$. Then
  $\cE_0\varphi \in H^{s,p'}(\R^d)$ by \cref{lem:zero-extension}
  and in fact
  $\|\varphi\|_{H^{s,p'}(\Omega)} =
  \|\cE_0\varphi\|_{H^{s,p'}(\R^d)}$. Let $j \in \{1,\dots,d\}$. We
  observe that
  \begin{equation*}
    \bigl\langle \partial_j f,\varphi
    \bigr\rangle \defn -\int_\Omega f \, \partial_j \varphi = -\int_{\R^d} \widehat f \,
    \partial_j \cE_0\varphi,
  \end{equation*}
  hence
  \begin{multline*}
    \left\lvert\bigl\langle \partial_j f,\varphi
      \bigr\rangle\right\rvert \leq \bigl\|\widehat
    f\bigr\|_{H^{1-s,p}(\R^d)}\bigl\|\partial_j
    \cE_0\varphi\|_{H^{s-1,p'}(\R^d)} \\ \leq \bigl\|\widehat
    f\bigr\|_{H^{1-s,p}(\R^d)}
    \bigl\|\cE_0\varphi\bigr\|_{H^{s,p'}(\R^d)} = \bigl\|\widehat
    f\bigr\|_{H^{1-s,p}(\R^d)}
    \bigl\|\varphi\bigr\|_{H^{s,p'}(\Omega)}.
  \end{multline*}
  Note that $C_{\partial\Omega}^\infty(\Omega)$ is dense in $H^{s,p'}(\Omega)$ since
  $s \in [0,1-\frac1p)$, cf.\
  \cref{lem:density-ccinfty-low-reg}. Thus, taking the infimum
  over all $\widehat f \in H^{1-s,p}(\R^d)$ such that
  $\widehat f_{\restriction \Omega} = f$, we find
  $\partial_j \in \cL(H^{1-s,p}(\Omega)\to H^{-s,p}(\Omega))$, since
  $H^{-s,p}(\Omega) = (H^{s,p'}(\Omega))^\star$ by definition.
\end{proof}

We also need the notion of a \emph{multiplier}.

\begin{definition}[Multiplier]
  \label{def:multiplier}    
  Let $X$ be a Banach space of functions $\Omega \to \C$.
  \begin{enumerate}[(i)]
  \item A function $\omega \colon \Omega \to \C$ is a \emph{multiplier
      on $X$} if the superposition operator $M_\omega$ defined by
    $(M_\omega f)(x) \defn \omega(x)f(x)$ maps $X$ continuously into
    itself. We write $\omega \in
    \cM(X)$ and the multiplier norm is given by $\|\omega\|_{\cM(X)} \defn \|M_\omega\|_{X\to X}$.
  \item For a matrix function
    $\omega \colon \Omega \to \C^{d\times d}$ where each component
    satisfies $\omega_{ij} \in \cM(X)$, we use the associated
    \emph{multiplier norm} defined by
  \begin{equation*}
    \|\omega\|_{\cM(X)}=\sqrt{\sum_{i=1}^m \sum_{j=1}^n \|\omega_{ij}\|_{\cM(X)}^2}.
  \end{equation*}
  \end{enumerate}
\end{definition}

Using multiplier assumptions, all of the differential and boundary
operators introduced in \cref{sec:operators} can be extended to
the Bessel scale. The collected result is as follows:

\begin{lemma}
  \label{lem:div-grad-extend}
  Let $p \in (1,\infty)$ and $\tau\in (0,\frac1p \wedge
  \frac1{p'})$, and let moreover the following assumptions be satisfied:
  \begin{itemize}
  \item $\rho\colon\Omega \to \C^{d\times d}$ such that
    $\rho_{ij} \in \cM(H^{\tau,p}(\Omega)) \cap \cM(H^{\tau,p'}(\Omega))$,
  \item $\beta_{\dive},\beta_g \in \cM(H^{\tau,p}(\Omega))^d
    \cap \cM(H^{\tau,p'}(\Omega))^d$, 
  \item $\eta \in L^d(\Omega)$,
  \item $E \subseteq \overline\Omega$ is a $(d-1)$-set and
    $\varrho \in L^\infty(E;\cH_{d-1})$.
  \end{itemize}
  Then the operator $A$ defined by
  \begin{equation}
    A \defn -\nabla\cdot\rho\nabla - \nabla \cdot \beta_{\dive} + \beta_g
    \cdot \nabla + \eta + \tr_E^*\varrho\tr_E\label{eq:diff-operator}
  \end{equation} maps
  $H^{1+\tau,p}_D(\Omega)$ continuously into
  $H^{\tau-1,p}_D(\Omega)$, and linearly extends to a continuous
  mapping from $H^{1-\tau,p}_D(\Omega)$ to $H^{-1-\tau,p}_D(\Omega)$.
\end{lemma}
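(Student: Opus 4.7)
The approach is to treat each of the five summands of $A$ defined in~\eqref{eq:diff-operator} separately, with the second-order term $-\nabla\cdot\rho\nabla$ being the main case and the others following by similar but simpler arguments combining Sobolev embeddings, the multiplier assumptions, and \cref{cor:slobo-domain-trace-op}.

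For the mapping on $H^{1+\tau,p}_D(\Omega)$ I would first use the forward direction of \cref{lem:gradient-extension} to get $\nabla u \in H^{\tau,p}(\Omega)^d$, then the multiplier assumption to conclude $\rho\nabla u \in H^{\tau,p}(\Omega)^d$. To land in $H^{\tau-1,p}_D(\Omega) = (H^{1-\tau,p'}_D(\Omega))^\star$, I would set $\langle -\nabla\cdot(\rho\nabla u),v\rangle \defn \int_\Omega \rho\nabla u \cdot \overline{\nabla v}$ for $v \in H^{1-\tau,p'}_D(\Omega)$, understood as the Bessel-scale duality between $H^{\tau,p}(\Omega)$ and $H^{-\tau,p'}(\Omega)$; here $\nabla v$ lies in $H^{-\tau,p'}(\Omega)^d$ thanks to the extension part of \cref{lem:gradient-extension}. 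The first-order term $-\nabla\cdot\beta_{\dive}u$ is handled analogously after embedding $u \in H^{1+\tau,p}(\Omega) \hookrightarrow H^{\tau,p}(\Omega)$. For $\beta_g\cdot\nabla u$, multiply to stay in $H^{\tau,p}(\Omega)$ and use $H^{\tau,p}(\Omega) \hookrightarrow L^p(\Omega) \hookrightarrow H^{\tau-1,p}_D(\Omega)$. For $\eta u$, combine $\eta \in L^d(\Omega)$ with the Sobolev embedding $H^{1+\tau,p}(\Omega) \hookrightarrow L^{pd/(d-p)}(\Omega)$ and H\"older to land in $L^p(\Omega)$. For $\tr_E^*\varrho\tr_E u$, apply \cref{cor:slobo-domain-trace-op} to get $\tr_E u$ in some $L^r(E;\cH_{d-1})$ and pair against $\tr_E v$ using the same corollary for the test function.

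For the extension to $H^{1-\tau,p}_D(\Omega) \to H^{-1-\tau,p}_D(\Omega)$ I would use the extension part of \cref{lem:gradient-extension} giving $\nabla u \in H^{-\tau,p}(\Omega)^d$. The key technical point is that $\rho$ also acts as a bounded multiplier on $H^{-\tau,p}(\Omega) = (H^{\tau,p'}(\Omega))^\star$, since the Banach adjoint of $M_{\rho_{ij}}$ on $H^{\tau,p'}(\Omega)$ is pointwise multiplication by $\rho_{ij}$ (or its conjugate, in antidual conventions) and has the same operator norm. Hence $\rho\nabla u \in H^{-\tau,p}(\Omega)^d$, and applying $-\nabla\cdot$ via the duality pairing with $\nabla v \in H^{\tau,p'}(\Omega)^d$ for $v \in H^{1+\tau,p'}_D(\Omega)$ (forward gradient lemma again) lands in $(H^{1+\tau,p'}_D(\Omega))^\star = H^{-1-\tau,p}_D(\Omega)$. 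The remaining summands are treated similarly, using $H^{1-\tau,p}(\Omega) \hookrightarrow H^{\tau,p}(\Omega)$ (valid since $\tau < \tfrac12$), Sobolev embeddings for $\eta u$, and the trace corollary for the boundary term; note that $\tr_E$ is well-defined on both $H^{1-\tau,p}_D(\Omega)$ and $H^{1+\tau,p'}_D(\Omega)$ thanks to $\tau < \tfrac1p \wedge \tfrac1{p'}$. Consistency of the extension with the original operator $A$ on the dense subspace $H^{1,p}_D(\Omega)$ is immediate from the construction.

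The main obstacle is precisely the duality step for multipliers on negative-order Bessel spaces; this is exactly why both $\cM(H^{\tau,p}(\Omega))$ and $\cM(H^{\tau,p'}(\Omega))$ appear in the hypotheses on $\rho$, $\beta_{\dive}$, and $\beta_g$, rather than just one. Apart from that, the proof is careful bookkeeping of Sobolev embeddings and trace continuities under the exponent constraint $\tau \in (0,\tfrac1p \wedge \tfrac1{p'})$.
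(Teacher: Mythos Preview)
Your proposal is correct and follows essentially the same route as the paper: treat each summand of $A$ separately, using \cref{lem:gradient-extension} for the gradient, the multiplier hypotheses for $\rho$, $\beta_{\dive}$, $\beta_g$, Sobolev embeddings plus H\"older for $\eta$, and \cref{cor:slobo-domain-trace-op} for the boundary form. The only organizational difference is that the paper tests against $\psi \in W^{1,p'}_D(\Omega)$ (respectively $\varphi \in W^{1,p}_D(\Omega)$) and then extends by density to $H^{1-\tau,p'}_D(\Omega)$ (respectively $H^{1-\tau,p}_D(\Omega)$), whereas you work directly on the target Bessel spaces by invoking the multiplier-on-dual observation; both use the $\cM(H^{\tau,p'}(\Omega))$ hypothesis in the same way (the paper shifts $\rho$ to $\rho^H$ on the test side, you pass to the adjoint of $M_{\rho_{ij}}$), and both need the intermediate embedding $H^{1\pm\tau,\cdot}_D(\Omega) \hookrightarrow W^{1-s,\cdot}(\Omega)$ for some $s \in (\tau,\tfrac1p \wedge \tfrac1{p'})$ to invoke the trace corollary, which you should make explicit.
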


\begin{proof} We first show that $-\nabla\cdot\rho\nabla$ maps
  $H^{1+\tau,p}_D(\Omega)$ continuously into $H^{\tau-1,p}_D(\Omega)$
  using the multiplier assumption. So, let
  $\varphi \in H^{1+\tau,p}_D(\Omega)$ and
  $\psi \in W^{1,p'}_D(\Omega)$. Then
  $\nabla\psi \in L^{p'}(\Omega) \subset H^{-\tau,p}_D(\Omega)$, and
  using \cref{lem:gradient-extension}, we find
  \begin{align*}
    \bigl\langle-\nabla\cdot\rho\nabla
      \varphi,\psi\bigr\rangle =
    \bigl(\rho\nabla\varphi,\nabla\psi\bigr)_{L^2(\Omega)^d}
    & \leq \|\rho\|_{\cM(H^{\tau,p}(\Omega))}
    \|\nabla\varphi\|_{H^{\tau,p}(\Omega)^d}\|\nabla\psi\|_{H^{-\tau,p'}(\Omega)^d}
    \\& \leq \|\rho\|_{\cM(H^{\tau,p}(\Omega))}
    \|\varphi\|_{H^{1+\tau,p}_D(\Omega)}\|\psi\|_{H^{1-\tau,p'}_D(\Omega)}.
  \end{align*}
  Since $W^{1,p'}_D(\R^d)$ is dense in $H^{1-\tau,p'}_D(\R^d)$, so is
  $W^{1,p'}_D(\Omega)$ in $H^{1-\tau,p'}_D(\Omega)$. Accordingly,
  $-\nabla\cdot\rho\nabla$ maps $H^{1+\tau,p}_D(\Omega)$ continuously
  into $H^{\tau-1,p}_D(\Omega)$.

  Next, we show that $-\nabla\cdot\rho\nabla$ continuously extends to
  an operator from $H^{1-\tau,p}_D(\Omega)$ to
  $H^{-1-\tau,p}_D(\Omega)$. We follow the same reasoning as above,
  this time for $\varphi \in W^{1,p}_D(\Omega)$ and
  $\psi \in H^{1+\tau,p'}_D(\Omega)$, to obtain
  \begin{equation*}
    \bigl\langle-\nabla\cdot\rho\nabla
      \varphi,\psi\bigr\rangle =
   \bigl(\nabla\varphi,\rho^H\nabla\psi\bigr)_{L^2(\Omega)^d}
    \leq \|\rho^H\|_{\cM(H^{\tau,p'}(\Omega))}
    \|\varphi\|_{H^{1-\tau,p}_D(\Omega)}\|\psi\|_{H^{1+\tau,p'}_D(\Omega)}.
  \end{equation*}
  Density of $W^{1,p}_D(\Omega)$ in $H^{1-\tau,p}_D(\Omega)$ then
  yields that $-\nabla\cdot\rho\nabla$ extends continuously to
  $H^{1-\tau,p}_D(\Omega)$, mapping into $H^{-1-\tau,p}_D(\Omega)$.

  The first-order operators $\nabla\cdot\beta_{\dive}$ and
  $\beta_g \cdot \nabla$ work exactly analogously. For the zero-order
  operator, the claim follows from Sobolev embeddings and H\"older's
  inequality. Let us thus turn to the boundary form operator. Choose
  $s \in (\tau,\frac1p \wedge \frac1{p'})$.
  Letting $u \in H^{1+\tau,p}_D(\Omega)$ and
  $v \in H^{1+\tau,p'}_D(\Omega)$, we estimate easily via
  \cref{cor:slobo-domain-trace-op}:
  \begin{align}
    \blangle \tr_E^*\varrho\tr_E u,v\brangle  &\leq
    \|\varrho\|_{L^\infty(E;\cH_{d-1})}
    \|\tr_E u\|_{L^p(E;\cH_{d-1})}\|\tr_E v\|_{L^{p'}(E;\cH_{d-1})} \notag \\ &
    \lesssim
    \|\varrho\|_{L^\infty(E;\cH_{d-1})}\|u\|_{W^{1-s,p}(\Omega)}
    \|v\|_{W^{1-s,p'}(\Omega)}. \label{eq:boundary-op-estimate}   
  \end{align}
  Now the assertion follows from the embeddings~(\cite[Thm.~4.6.1]{Triebel})
  \begin{equation*}
    H^{1+\tau,p}_D(\Omega) \embeds H^{1-\tau,p}_D(\Omega) \embeds
    W^{1-s,p}(\Omega), \quad 
    H^{1+\tau,p'}_D(\Omega) \embeds H^{1-\tau,p'}_D(\Omega) \embeds W^{1-s,p'}(\Omega),
  \end{equation*}
  where the first ones in the respective chain are dense.
\end{proof}

\begin{rem}
  \label{rem:multipliers} Let $p \in (1,\infty)$ and $\tau \in (0,\frac1p)$.
  \begin{enumerate}[(i)]
  \item All multipliers considered will be bounded: 
    $\cM(H^{\tau,p}(\Omega)) \embeds \cM(L^p(\Omega))$ and $L^\infty(\Omega) =
    \cM(L^p(\Omega))$, the latter up to equivalent norms. Indeed, note
    that the constant function $\mathbf{1}$ is an element of
    $H^{\tau,p}(\Omega)$. So let $k \in \N$ and consider for
    $\omega \in \cM(H^{\tau,p}(\Omega))$:
  \begin{equation*}
    \|\omega
    \|_{L^{pk}(\Omega)} = \|\omega^k
    \mathbf{1}\|_{L^p(\Omega)}^{\frac1k} \lesssim    \|\omega^k 
    \mathbf{1}\|_{H^{\tau,p}(\Omega)}^{\frac1k} \lesssim
    \|\omega\|_{\cM(H^{\tau,p}(\Omega))}
    \|\mathbf{1}\|_{H^{\tau,p}(\Omega)}^{\frac1k}. 
  \end{equation*} 
  Since $\|\mathbf{1}\|_{H^{\tau,p}(\Omega)}^{\frac1k} \lesssim 1$, it
  follows by contradiction that $\omega \in L^\infty(\Omega)$, and
  taking the limit as $k \to \infty$ gives the desired embedding. It
  is easy to see that $L^\infty(\Omega)$ and $\cM(L^p(\Omega))$ are
  isomorphic. Note moreover that $\cM(H^{\tau,p}(\Omega)) \embeds \cM(L^p(\Omega))$
  implies that
  $\cM(H^{\tau,p}(\Omega)) \embeds \cM(H^{\sigma,p}(\Omega))$ for all
  $\sigma \in [0,\tau]$ via complex
  interpolation~(\cite[Rem.~3.9]{BM19}).
\item We do not have a general description of
  $\cM(H^{\tau,p}(\Omega))$ for $\tau > 0$ in terms of classical
  function spaces. However, there is a substantial body of work
  devoted to multipliers on the usual function spaces; we mention
  exemplarily the comprehensive books~\cite{MazyaMult,RunstSickel},
  or~\cite[Sect.~5]{Marschall}. We give a few examples. Most
  generally, due to the condition $\tau < \frac1p$,
  \cref{lem:zero-extension} implies that
  $\cM(H^{\tau,p}(\R^d)) \embeds \cM(H^{\tau,p}(\Omega))$. It is
  moreover a classical result that
  $C^\sigma(\Omega) \embeds \cM(H^{\tau,p}(\Omega))$ whenever
  $\tau< \sigma \leq 1$, where $C^\sigma(\Omega)$ denotes the space of
  $\sigma$-H\"older continuous functions. In fact, already a slightly
  larger Besov space does the job:
  $B^{\tau}_{\infty,p}(\Omega) \embeds \cM(H^{\tau,p}(\Omega))$. We
  refer to e.g.~\cite[Ch.~4.7.1]{RunstSickel}
  and~\cite[Lem.~1]{haller}, where it is also mentioned that
  $B^\tau_{\infty,p}(\Omega) \embeds C^\tau(\Omega)$. But continuity
  is not at all necessary for the multiplier property, in particular
  in the present low-regularity case of $\tau < \frac1p$: already the
  characteristic functions $\chi_\Lambda$ of certain subsets
  $\Lambda \subset \Omega$ are also multipliers on
  $H^{\tau,p}(\Omega)$. Examples for sets $\Lambda$ with this property
  are convex sets~(\cite[Rem.~3.5.3]{MazyaMult}) or sets of locally
  finite perimeter~(\cite[p.~214ff]{RunstSickel}); see
  also~\cite{sickel} for the probably most general admissible
  class. (In fact,~\cite[Thm.~4.4]{sickel} provides
  \cref{lem:zero-extension}.)
  \end{enumerate}
\end{rem}

The following is our main result for this section. It holds
for both $X \in \{H,W\}$.

\begin{theorem}
  \label{thm:invert-extrapol}
  Let $p \in (1,\infty)$ and
  $\tau\in (0,\frac1p \wedge \frac{1}{p'})$. Let $A$ be as
  in~\eqref{eq:diff-operator} and let the following
  assumptions on the data be satisfied, as in \cref{lem:div-grad-extend}:
    \begin{itemize}
  \item $\rho\colon\Omega \to \C^{d\times d}$ such that
    $\rho_{ij} \in \cM(H^{\tau,p}(\Omega)) \cap \cM(H^{\tau,p'}(\Omega))$,
  \item $\beta_{\dive},\beta_g \in \cM(H^{\tau,p}(\Omega))^d
    \cap \cM(H^{\tau,p'}(\Omega))^d$, 
  \item $\eta \in L^d(\Omega)$,
  \item $E \subseteq \overline\Omega$ is a $(d-1)$-set and
    $\varrho \in L^\infty(E;\cH_{d-1})$.
  \end{itemize}
  Suppose further that there is $\lambda \in \C$ such that
  \begin{equation*}
  A + \lambda  \in \cLiso\bigl(W^{1,p}_D(\Omega)\to W^{-1,p}_D(\Omega)\bigr).
\end{equation*}
  Then there is
  $\bar s \in (0,\tau]$ such that
  \begin{equation} \label{e-extiso} A + \lambda \in
    \cLiso\bigl(X^{1+s,p}_D(\Omega) \to X^{s-1,p}_D(\Omega)\bigr) \quad
    (s\in (-\bar s,\bar s)).
  \end{equation}
  Further, both $\bar s$ and the norms of the inverse operators $(A + \lambda)^{-1}$
  between $X^{s-1,p}_D(\Omega)$ and $X^{1+s,p}_D(\Omega)$ for
  $s \in (-\bar s,\bar s)$ can be estimated uniformly in the norm of
  all the given data and
  $ \|A + \lambda\|^{-1}_{W^{1,p}_D(\Omega) \to W^{-1,p}_D(\Omega)}$.
\end{theorem}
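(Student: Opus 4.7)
The plan is to apply the quantitative Sneiberg theorem (\cref{thm:sneiberg}) to carefully chosen interpolation couples whose midpoint corresponds to the assumed isomorphism on $W^{1,p}_D(\Omega)\to W^{-1,p}_D(\Omega)$ and then re-parametrize. Concretely, by \cref{lem:div-grad-extend}, $A+\lambda$ extends to continuous operators
\begin{equation*}
  A + \lambda \colon H^{1+\tau,p}_D(\Omega) \to H^{\tau-1,p}_D(\Omega)
  \quad \text{and} \quad A+\lambda \colon H^{1-\tau,p}_D(\Omega) \to H^{-1-\tau,p}_D(\Omega),
\end{equation*}
with operator norms bounded purely in terms of the data norms (coefficients, $\beta_{\dive}$, $\beta_g$, $\eta$, $\varrho$) and $|\lambda|$. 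I set $X_0 = H^{1+\tau,p}_D(\Omega)$, $X_1 = H^{1-\tau,p}_D(\Omega)$ and $Y_0 = H^{\tau-1,p}_D(\Omega)$, $Y_1 = H^{-1-\tau,p}_D(\Omega)$, observing that the constraint $\tau < \tfrac1p \wedge \tfrac1{p'}$ is exactly what ensures the trace exponents $1\pm\tau$ and $-1\pm\tau$ lie in the admissible regime for partial zero-trace Bessel spaces and their anti-duals.

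Next, I identify the relevant interpolation spaces. By \cref{prop:interpolation-zero-trace} and \cref{cor:interpolation-dual}, for $\theta \in (0,1)$ we have, up to equivalent norms,
\begin{equation*}
  [X_0,X_1]_\theta = H^{1+s(\theta),p}_D(\Omega),
  \qquad [Y_0,Y_1]_\theta = H^{s(\theta)-1,p}_D(\Omega),
  \qquad s(\theta)\defn (1-2\theta)\tau.
\end{equation*}
In particular, at $\theta=\tfrac12$ the left and right spaces become $H^{1,p}_D(\Omega)=W^{1,p}_D(\Omega)$ and $H^{-1,p}_D(\Omega)=W^{-1,p}_D(\Omega)$, so the hypothesis gives $\tfrac12 \in \cI(A+\lambda)$. \cref{thm:sneiberg} then yields an open interval around $\tfrac12$ contained in $\cI(A+\lambda)$, and re-parametrizing through $s(\theta)$ provides the desired $\bar s \in (0,\tau]$ such that~\eqref{e-extiso} holds in the Bessel scale for all $s \in (-\bar s, \bar s)$. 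For the Slobodetskii scale $X=W$, \cref{cor:real-invert} together with the real interpolation identification~\eqref{eq:real-interpolation} of \cref{prop:interpolation-zero-trace} (and its dual in \cref{cor:interpolation-dual}) transfers the isomorphism property to $W^{1+s,p}_D(\Omega) \to W^{s-1,p}_D(\Omega)$ without change of $\bar s$.

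For the uniform estimates, I use the quantitative part of \cref{thm:sneiberg}: the admissible radius in~\eqref{eq:sneiberg-interval} and the inverse bound $8\kappa^{-1}$ depend only on $\|A+\lambda\|_{\cL(X_i\to Y_i)}$ for $i=0,1$ and on $\kappa = \|(A+\lambda)^{-1}\|^{-1}_{W^{-1,p}_D\to W^{1,p}_D}$. The first quantities are controlled by the data norms via the estimates in the proof of \cref{lem:div-grad-extend}, while the second is exactly the assumed quantity from the hypothesis; accordingly, $\bar s$ and the inverse norms depend only on these.

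The main obstacle is not in the strategy but in ensuring the bookkeeping: verifying that the extended operator from \cref{lem:div-grad-extend} is genuinely consistent with the $W^{1,p}_D\to W^{-1,p}_D$ action (so that the same $A+\lambda$ acts on all spaces of the scale, and the isomorphism at $\theta=\tfrac12$ inside the interpolation scale truly coincides with the assumed one), and that all involved spaces indeed fall into the admissible parameter range of \cref{prop:interpolation-zero-trace} and \cref{cor:interpolation-dual}. The former is automatic from construction by density of $W^{1,p'}_D(\Omega)$ in $H^{1-\tau,p'}_D(\Omega)$, and the latter is exactly the purpose of the hypothesis $\tau \in (0,\tfrac1p\wedge\tfrac1{p'})$.
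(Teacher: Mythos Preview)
Your proposal is correct and follows essentially the same route as the paper: apply \cref{lem:div-grad-extend} to obtain boundedness of $A+\lambda$ on the endpoint Bessel spaces $H^{1\pm\tau,p}_D(\Omega)\to H^{\pm\tau-1,p}_D(\Omega)$, identify the midpoint $\theta=\tfrac12$ with $W^{1,p}_D(\Omega)\to W^{-1,p}_D(\Omega)$ via \cref{prop:interpolation-zero-trace}/\cref{cor:interpolation-dual}, invoke the quantitative Sneiberg theorem and \cref{cor:real-invert}, and reparametrize $s=(1-2\theta)\tau$. The uniformity argument via~\eqref{eq:sneiberg-interval} and the endpoint estimates from \cref{lem:div-grad-extend} is likewise the paper's reasoning.
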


\begin{proof}
  We only need to collect several results from above and combine them
  with the Sneiberg \cref{thm:sneiberg}. First, due to
  \cref{lem:div-grad-extend}, we already know that $A$ gives rise
  to continuous linear operators
  $H^{1+\tau,p}_D(\Omega) \to H^{\tau-1}_D(\Omega)$ and
  $H^{1-\tau,p}_D(\Omega) \to H^{-1-\tau,p}_D(\Omega)$, and it is
  clear that this extends to $A +\lambda$.
  
  Second, we note that, by
  \cref{prop:interpolation-zero-trace} and
  \cref{cor:interpolation-dual},
  \begin{equation*}
    W^{1,p}_D(\Omega) =
    \bigl[H^{1+\tau,p}_D(\Omega),H^{1-\tau,p}_D(\Omega)\bigr]_{\frac12},
    \quad 
    W^{-1,p}_D(\Omega) =
    \bigl[H^{\tau-1,p}_D(\Omega),H^{-1-\tau,p}_D(\Omega)\bigr]_{\frac12}.
  \end{equation*}
  From \cref{cor:real-invert} and \cref{thm:sneiberg}
  we thus infer that there is $\eps \in (0,\frac12]$ such that the
  operators
  \begin{equation*}A  + \lambda\colon
    \begin{cases} \quad
      \bigl[H^{1+\tau,p}_D(\Omega),H^{1-\tau,p}_D(\Omega)\bigr]_{\delta}
      &\to \quad
      \bigl[H^{\tau-1,p}_D(\Omega),H^{-1-\tau,p}_D(\Omega)\bigr]_{\delta}
      \\[1em] \quad
      \bigl(H^{1+\tau,p}_D(\Omega),H^{1-\tau,p}_D(\Omega)\bigr)_{\delta,p}
      &\to \quad \bigl(H^{\tau-1,p}_D(\Omega),H^{-1-\tau,p}_D(\Omega)\bigr)_{\delta,p}
    \end{cases}    
  \end{equation*}
  remain continuously invertible for all $\delta \in (\frac12-\eps,\frac12+\eps)$.
  But according to \cref{prop:interpolation-zero-trace},
  the former spaces coincide with
  $H^{1+s,p}_D(\Omega) \to H^{1-s,p}_D(\Omega)$ and the latter ones
  with $W^{1+s,p}_D(\Omega) \to W^{1-s,p}_D(\Omega)$, where we have
  set $s \defn \tau(1-2\delta)$. The range of $\delta$ then
  corresponds to $s \in (-\bar s,\bar s)$ where
  $\bar s \defn 2\tau\eps$. Thus we obtain exactly~\eqref{e-extiso}.

  The claimed uniformity of $\bar s$ and the norms of the inverses of
  $A + \lambda$ follows immediately from~\eqref{eq:sneiberg-interval} in
  \cref{thm:sneiberg} and the associated norm estimate,
  together with the estimates on the extension and restriction of $A$
  to the Bessel scale as obtained in \cref{lem:div-grad-extend}.
\end{proof}

Note that~\cite{disser} gives a comprehensive list of settings where
the principal part $-\nabla\cdot\rho\nabla$ (or
$-\nabla\cdot\rho\nabla + \lambda$) of $A$ alone satisfies the
isomorphism assumption in \cref{thm:invert-extrapol}. It thus seems
appropriate to state an auxiliary result leading to the corresponding
assumption for $A$, starting from just the principal part.

\begin{corollary} \label{cor:rand-inv} Let $p \geq 2$. Let
  $\lambda \in \C$ and suppose the following on the data:
 \begin{itemize}
  \item $\rho \in \cC(\rho_\bullet,\rho^\bullet)$,
  \item $\beta_{\dive} \in L^\infty(\Omega))^d$ and there is $\tau \in
    (0,\frac1p)$ such that $\beta_g \in \cM(H^{\tau,p'}_D(\Omega))^d$, 
  \item $\eta \in L^d(\Omega)$ and there exists $\eta_\bullet \in
    \R$ such that $\Re \eta \geq \eta_\bullet$ a.e.\ on $\Omega$,
  \item $E \subseteq \overline\Omega$ is a $(d-1)$-set,
    $\varrho \in L^\infty(E;\cH_{d-1})$ and $\Re\varrho
    \geq \varrho_\bullet \geq 0$ in the $\cH_{d-1}$-a.e.\ sense on $E$.
  \end{itemize}
  Moreover, assume that
  \begin{equation*}
    \alpha \defn \Re \lambda+\eta_\bullet -
    \frac{\bigl(\|\beta_{\dive}\|_{L^\infty(\Omega)}+\|\beta_g\|_{L^\infty(\Omega)}\bigr)^2}{2c_\bullet}
    \geq 0, 
  \end{equation*}
  and that $\alpha + \varrho_\bullet > 0$ if $\lambda \neq
  0$. Then
  \begin{equation*}
-\nabla \cdot \rho \nabla + \lambda \in
  \cLiso\bigl(W^{1,p}_D(\Omega)\to W^{-1,p}_D(\Omega)\bigr) \hfill \implies \hfill A+ \lambda \in
  \cLiso\bigl(W^{1,p}_D(\Omega)\to W^{-1,p}_D(\Omega)\bigr)
\end{equation*}
with $A$ as in~\eqref{eq:diff-operator}.
\end{corollary}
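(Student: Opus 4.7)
The plan is to view $A+\lambda$ as a compact perturbation of $-\nabla\cdot\rho\nabla+\lambda$ on $W^{1,p}_D(\Omega)\to W^{-1,p}_D(\Omega)$, so that $A+\lambda$ is Fredholm of index zero, and then to establish injectivity by a coercivity estimate in the $L^2$-pairing which is available because $p\geq 2$ and $\Omega$ is bounded, so $W^{1,p}_D(\Omega)\embeds W^{1,2}_D(\Omega)$.

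For the compactness of $T\defn A-(-\nabla\cdot\rho\nabla)=-\nabla\cdot\beta_{\dive}+\beta_g\cdot\nabla+\eta+\tr_E^*\varrho\tr_E$ from $W^{1,p}_D(\Omega)$ to $W^{-1,p}_D(\Omega)$, I treat each summand separately. The first factors as $W^{1,p}_D(\Omega)\embeds L^p(\Omega)\xrightarrow{M_{\beta_{\dive}}}L^p(\Omega)\to W^{-1,p}_D(\Omega)$, with the first embedding compact by Rellich--Kondrachov and the last step continuous by definition. The second factors as $W^{1,p}_D(\Omega)\xrightarrow{\nabla}L^p(\Omega)^d\xrightarrow{\beta_g\cdot}L^p(\Omega)\embeds W^{-1,p}_D(\Omega)$; here the final embedding is compact as the dual of the compact Rellich embedding $W^{1,p'}_D(\Omega)\embeds L^{p'}(\Omega)$ by Schauder's theorem. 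The zero-order multiplication $\eta\cdot$ is compact by a similar factorization using H\"older and $\eta\in L^d(\Omega)$ together with compactness of $W^{1,p}_D(\Omega)\embeds L^q(\Omega)$ for an appropriate $q$ below the Sobolev exponent. Finally, $\tr_E^*\varrho\tr_E$ is compact by \cref{cor:slobo-domain-trace-op}, which yields compactness of $\tr_E\colon W^{1,p}_D(\Omega)\to L^r(E;\cH_{d-1})$ for $r<\infty$, hence compactness of its $L^\infty$-weighted adjoint composition into $W^{-1,p}_D(\Omega)$. Consequently $A+\lambda=(-\nabla\cdot\rho\nabla+\lambda)+T$ is Fredholm of index zero.

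It remains to show injectivity of $A+\lambda$ on $W^{1,p}_D(\Omega)$. Let $u\in W^{1,p}_D(\Omega)\subseteq W^{1,2}_D(\Omega)$ satisfy $(A+\lambda)u=0$. Pair with $u$, take real parts, bound the cross terms by Young's inequality $ab\leq\tfrac{\rho_\bullet}{2}a^2+\tfrac{1}{2\rho_\bullet}b^2$ to absorb all gradient contributions from $\beta_{\dive},\beta_g$ into the ellipticity, and invoke the lower bounds on $\Re\eta$ and $\Re\varrho$; this yields
\begin{equation*}
  0=\Re\langle(A+\lambda)u,u\rangle\geq\tfrac{\rho_\bullet}{2}\|\nabla u\|_{L^2(\Omega)}^2+\alpha\|u\|_{L^2(\Omega)}^2+\varrho_\bullet\|\tr_E u\|_{L^2(E;\cH_{d-1})}^2.
\end{equation*}
All three nonnegative summands therefore vanish. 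In particular $\nabla u=0$, so $u$ is constant on the connected set $\Omega$. If $\lambda=0$, the assumed isomorphism property of $-\nabla\cdot\rho\nabla$ implies $\mathbf{1}\notin W^{1,p}_D(\Omega)$, forcing $u=0$. If $\lambda\neq 0$, the hypothesis $\alpha+\varrho_\bullet>0$ settles it: if $\alpha>0$, then $\|u\|_{L^2(\Omega)}=0$ directly; if $\alpha=0$ and $\varrho_\bullet>0$, then $\tr_E u=0$ on $E$, and since $u$ is constant this constant must vanish. Combined with Fredholm index zero this yields the isomorphism $A+\lambda\in\cLiso(W^{1,p}_D(\Omega)\to W^{-1,p}_D(\Omega))$. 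The main care is needed in the compactness argument for $\eta\cdot$ at the borderline integrability $\eta\in L^d(\Omega)$, and in justifying the passage from $W^{1,p}_D(\Omega)$ to the $L^2$-pairing used for injectivity, which requires $p\geq 2$.
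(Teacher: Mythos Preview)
Your proof is correct and follows the same two-step strategy as the paper: show that the lower-order part $B$ is a compact perturbation of $-\nabla\cdot\rho\nabla+\lambda$ so that $A+\lambda$ is Fredholm of index zero, and then verify injectivity via the $L^2$-coercivity estimate using $p\geq 2$; your injectivity computation with Young's inequality is exactly what the paper leaves to the reader.

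The only genuine difference is in how compactness of $B$ is obtained. The paper factors $B$ through the compact embedding $W^{1,p}_D(\Omega)\embeds H^{1-\tau,p}_D(\Omega)$ and then checks continuity of $B\colon H^{1-\tau,p}_D(\Omega)\to W^{-1,p}_D(\Omega)$; this is where the multiplier hypothesis $\beta_g\in\cM(H^{\tau,p'}_D(\Omega))^d$ is actually used. You instead split $B$ into its four summands and treat each by elementary Rellich/Schauder arguments, which only needs $\beta_g\in L^\infty(\Omega)^d$ (a consequence of the multiplier assumption via \cref{rem:multipliers}). Your route is slightly more elementary and in fact shows that the multiplier condition on $\beta_g$ is stronger than what the corollary itself requires. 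One minor imprecision: in the trace term you write that $\tr_E$ is compact into $L^r(E;\cH_{d-1})$ ``for $r<\infty$''; \cref{cor:slobo-domain-trace-op} gives this only for $r$ below the critical trace exponent when $p<d$, but since you only need $r=p$ (to pair with $\tr_E v\in L^{p'}(E;\cH_{d-1})$) and $p$ is strictly subcritical, this is harmless.
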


Recall that the assumption on $\beta_g$ implies that $\beta_g \in
L^\infty(\Omega)$, see \cref{rem:multipliers}.

\begin{proof}[Proof of \cref{cor:rand-inv}]
  We first intend to show that the lower order operator $B$, so
  $B \defn -\nabla\cdot\beta_{\dive} + \beta_g\cdot\nabla +
  \eta+\tr_E^*\varrho\tr_E$, is relatively compact on
  $W^{-1,p}_D(\Omega)$ with respect to
  $-\nabla\cdot\rho\nabla + \lambda$. Let $\tau$ be from the assumption
  on $\beta_g$. By the compactness of the embedding
  $W^{1,p}_D(\Omega) \embeds H^{1-\tau,p}_D(\Omega)$, it suffices to
  prove that $B \colon H^{1-\tau,p}_D(\Omega) \to W^{-1,p}_D(\Omega)$
  is continuous. But this is straightforward to verify from the
  assumptions; for the boundary operator $\tr_E^*\varrho\tr_E$ we
  choose $s \in (\tau,\frac1p)$ and refer to the
  estimate~\eqref{eq:boundary-op-estimate} and the embeddings
  mentioned right below.

  With $B$ relatively compact with respect to $-\nabla\cdot\rho\nabla +
  \lambda$, it follows that
  $A = -\nabla\cdot\rho\nabla +
  \lambda + B$ is (semi-)Fredholm on $W^{1,p}_D(\Omega)$ with index $0$, since
  $-\nabla\cdot\rho\nabla + \lambda$ is
  so~(\cite[Ch.~IV. Thm.~5.26]{kato}).
  Thus, it is enough to show that $A$ is injective on
  $W^{1,p}_D(\Omega)$. But this follows easily using
  $\rho \in \cC(\rho_\bullet,\rho^\bullet)$ and the conditions on $\alpha$
  and $\varrho_\bullet$. Here we also use that $p\geq2$. (Note that if
  $\lambda = 0$, then, by the isomorphism assumption,
  $\mathbf{1} \notin W^{1,p}_D(\Omega)$.)
\end{proof}

\begin{rem}
  \label{rem:strong-form}
  We complement the abstract results of
  \cref{thm:invert-extrapol} by attaching a boundary value
  problem. Let for simplicity $f \in L^p(\Omega)$ and
  $g \in L^p(\Neumann;\cH_{d-1})$ as well as $\lambda=0$. Under the
  assumptions in \cref{thm:invert-extrapol}, the abstract problem
  \begin{equation*}
    Au = f + \tr_\Neumann^* g
  \end{equation*}
  admits a unique solution $u \in H^{1+s,p}_D(\Omega)$ for some $s>0$,
  and $u$ depends continuously on $f$ and $g$. The associated boundary
  value problem is
  \begin{align*}
    -\dive\bigl(\rho\nabla u + \beta_{\dive}u\bigr) + \beta_g \cdot
    \nabla u + \eta u & = f && \text{in}~\Omega, \\
    -\rho\nabla u \cdot \nu +  \varrho u & = g && \text{on}~\Neumann, \\
    u&  = 0 && \text{on}~\Diri.
  \end{align*}
  The connection between the abstract and boundary value problem
  formulation can be made precise under additional assumptions on
  $\Omega$ which would allow to apply the divergence
  theorem; see e.g.~\cite[Ch.~1.2]{Cia} or \cite[Ch.~2.2]{GGZ}.
\end{rem}

\section{The van Roosbroeck system of semiconductor equations}\label{sec:semiconductor}

In this section we use \cref{thm:invert-extrapol} to give
a direct treatment of the van Roosbroeck system of semiconductor
equations. Here, we focus on Boltzmanns statistics only; see
however \cref{r-concl} below. The van Roosbroeck system
was already briefly introduced in the introduction and we now
give a more detailed explanation.

In the van Roosbroeck system, negative and positive charge
carriers, electrons and holes, move by diffusion and drift in a
self-consistent electrical field; on their way, they may
recombine to charge-neutral electron-hole pairs or, vice versa,
negative and positive charge carriers may be generated from
charge-neutral electron-hole pairs.  The electronic state of the
semiconductor device $\Omega \subset \R^3$ resulting from these
phenomena is described by the triple $(u_1,u_2,\varphi)$ of
unknowns consisting of the densities $u=(u_1,u_2$) of electrons
and holes and the electrostatic potential $\varphi$. Their
evolution during the (finite) time interval $J = (0,T)$ is then
described by the equations already mentioned in the
introduction, so the
\emph{Poisson equation}
\begin{equation*}
  \tag{\ref{Poisson-eq}}
  \begin{aligned}
    -\dive\left( \varepsilon \nabla \varphi \right) & = {\fd} +
    u_1 - u_2  &\quad&
    \text{in}~J\times\Omega, \\
    \varphi & = {\varphi}_\Diri &\quad&
    \text{on}~J\times \Diri, \\
    {\nu}\cdot{ \left( \varepsilon \nabla \varphi \right) } +
    \varepsilon_{\Gamma} \varphi & ={\varphi}_\Gamma &\quad&
    \text{on}~J\times\Neumann,
  \end{aligned}
\end{equation*}
and, for $k=1,2$, the \emph{current-continuity
  equations}
\begin{equation*}
  \tag{\ref{CuCo-eq}}
  \begin{aligned}
    \partial_t u_k - \dive j_k
    &= r^\Omega(u,\varphi)  &\quad& \text{in}~J\times (\Omega \setminus \Pi)\\
    u_k &=U_k &\quad&\text{on}~J\times\Diri,
    \\
    {\nu}\cdot{j_k} &= {r}^\Gamma(u,\varphi)
    &\quad&\text{on}~J\times\Neumann,
    \\
    [{\nu}\cdot{j_k}] &= {r}^\Pi(u,\varphi)&\quad&\text{on}~J\times\Pi, \\
    u_k(0) & = u_k^0 &\quad& \text{on}~\Omega,
  \end{aligned}
\end{equation*}
with the currents
\begin{equation*} \tag{\ref{eq:curr-dens}} j_k =
  \mu_k\bigl(\nabla u_k +(-1)^k u_k \nabla \varphi
  \bigr).
\end{equation*}
Let us also repeat that $\Omega \subset \R^3$ is a bounded domain,
$\nu$ its unit outer normal at $\partial\Omega$ and the latter is
decomposed into a Dirichlet part $\Diri$ and a Neumann/Robin part
$\Neumann \defn \partial\Omega\setminus \Diri$. We will require
$\Omega$ to satisfy \cref{a-geometry} and to have some
additional but in general very mild properties, specified in
\cref{sec:semiconduct-assumptions} below.

The parameters in the Poisson equation are the dielectric permittivity
$\eps\colon\Omega\to\R^{3\times 3}$ and the so-called \emph{doping
  profile} $\fd$.  The latter comes from impurities induced in the
materials or even very small layers of different, reaction-enhancing
material in the device $\Omega$, see~\cite{naz} or~\cite{drumm}. As
such we will allow it to be located only on two-dimensional surfaces
in $\overline \Omega$; see our mathematical requirement on $\fd$ in
\cref{assu:poi-diri-bv} below. Moreover, in the boundary
conditions, $\varepsilon_{\Neumann} \colon \Neumann\to[0,\infty)$
represents the capacity of the part of the corresponding device
surface, ${\varphi}_\Diri$ and ${\varphi}_\Neumann$ are the voltages
applied at the contacts of the device, thus they may depend on time. As
above, we always write $u$ for the pair of densities $(u_1,u_2)$.

Although we are aware of the fact that, from a physical point of
view, the Dirichlet data $\varphi_\Diri$ in~\eqref{Poisson-eq}
and $U_k$ in~\eqref{CuCo-eq} is---at least in case of a voltage
driven regime---an essential part of the model, we will focus on
the case where it is zero. This is in order to make the most
fundamental things in the analysis visible, for the (standard)
treatment of non-zero data see~\cite{vanroos2d}
and~\cite{vanroos3d}.

The current-continuity equations feature the
fluxes~\eqref{eq:curr-dens} with 
the mobility tensors $\mu_k \colon \Omega\to\R^{3\times 3}$ for
electrons and holes, and the recombination terms
$r^\Omega, r^\Neumann$ and $r^\Pi$. Here $r^\Omega$ models
recombination in the bulk and the normal fluxes across the exterior
boundary $\Neumann$ are balanced with surface recombination
$r^\Neumann$ taking place on $\Neumann$. For the physical significance
of interfacial recombination induced by $r^\Pi$ in modern devices we
refer to e.g.~\cite{Xiang} or~\cite[Ch.~3]{Visvana}.

The bulk recombination term $r^\Omega$ in~\eqref{CuCo-eq} can consist
of rather general functions of the electrostatic potential $\varphi$,
of the currents $j_k$, and of the vector of electron/hole densities
$u$. It describes the production, or destruction, depending on the
sign, of electrons and holes. Below, we collect some of the most
relevant examples, covering non-radiative recombination like the
Shockley-Read-Hall recombination due to phonon transition, Auger
recombination (three particle transition), and Avalanche
generation. See e.g.~\cite{selberherr84,landsberg91,gajewski93} and
the references cited there for more information. The most familiar
recombination mechanisms are the following two:
\begin{itemize}
\item \emph{Shockley-Read-Hall recombination}  (photon transition):
  \begin{equation}\label{e-shockleyrh}
    \reaction^\Omega_{\text{SRH}}(u) 
    \defn \frac{u_1 u_2 - n_i^2}{\tau_2(u_1+n_1)+\tau_1(u_2+n_2)},
  \end{equation}
  where $n_i$ is the intrinsic carrier density, $n_1$, $n_2$ are
  reference densities, and $\tau_1$, $\tau_2$ are the lifetimes of
  electrons and holes, respectively. 
\item  \emph{Auger recombination} (three particle transitions):
  \begin{equation} \label{e-auger} \reaction^\Omega_{\text{Auger}}(u) = \bigl(u_1
    u_2 - n_i^2 \bigr)\bigl(c_1^{\text{Auger}} u_1 + c_2^{\text{Auger}}u_2\bigr),
  \end{equation}
  where $c_1^{\text{Auger}}$ and $c_2^{\text{Auger}}$ are the Auger capture
  coefficients of electrons and holes, respectively, in the
  semiconductor material.
\end{itemize}
All occurring constants are parameters of the semiconductor material.

Both recombination mechanisms mentioned above depend on the carrier
densities $u$ only. This is \emph{not} the case for the Avalanche
generation term which depends also on the \emph{gradients} of the
physical quantities:
\begin{itemize}
\item An analytical expression for \emph{Avalanche generation} (impact
  ionization), valid at least in the material cases of Silicon or
  Germanium, is
  \begin{equation} \label{e-aval} r^\Omega_{\text{Ava}}(u,\varphi)=c_2 |j_2|
    \exp \Bigl ({\frac {-a_2|j_2|}{|\nabla \varphi \cdot j_2|}}\Bigr ) +c_1 |j_1| \exp \Bigl ({\frac
      {-a_1|j_1|}{|\nabla\varphi \cdot j_{1}|}}\Bigr ).
  \end{equation}
  Again, the parameters $a_1,a_2 > 0$ and $c_1,c_2$ are material-dependent. We refer
  to~\cite[p.~111/112]{selberherr84} and references; in particular
  Tables 4.2-3/4.2-4, and see also~\cite[Ch.~17, p.~54/55]{Marko}.
\end{itemize}

We give more functional-analytic meaning to the recombination terms in
the next section, where we collect the various assumptions on the data
in~\eqref{vanRoos}.

\subsection[Prerequisites]{Assumptions}
\label{sec:semiconduct-assumptions}
%

In this section, we introduce some mathematical terminology and state
mathematical prerequisites for the analysis of the van Roosbroeck
system~\eqref{vanRoos}. All assumptions in this section are supposed
to be valid from now on.

\subsubsection{Assumptions on the geometry}

We begin with the following geometric requirements on the domain
$\Omega$ occupied by the
device. \cref{fig-technoa} shows a
typical example of a semiconductor device.

\begin{figure}[htb]
  \centering
  \includegraphics[width=0.5\linewidth] {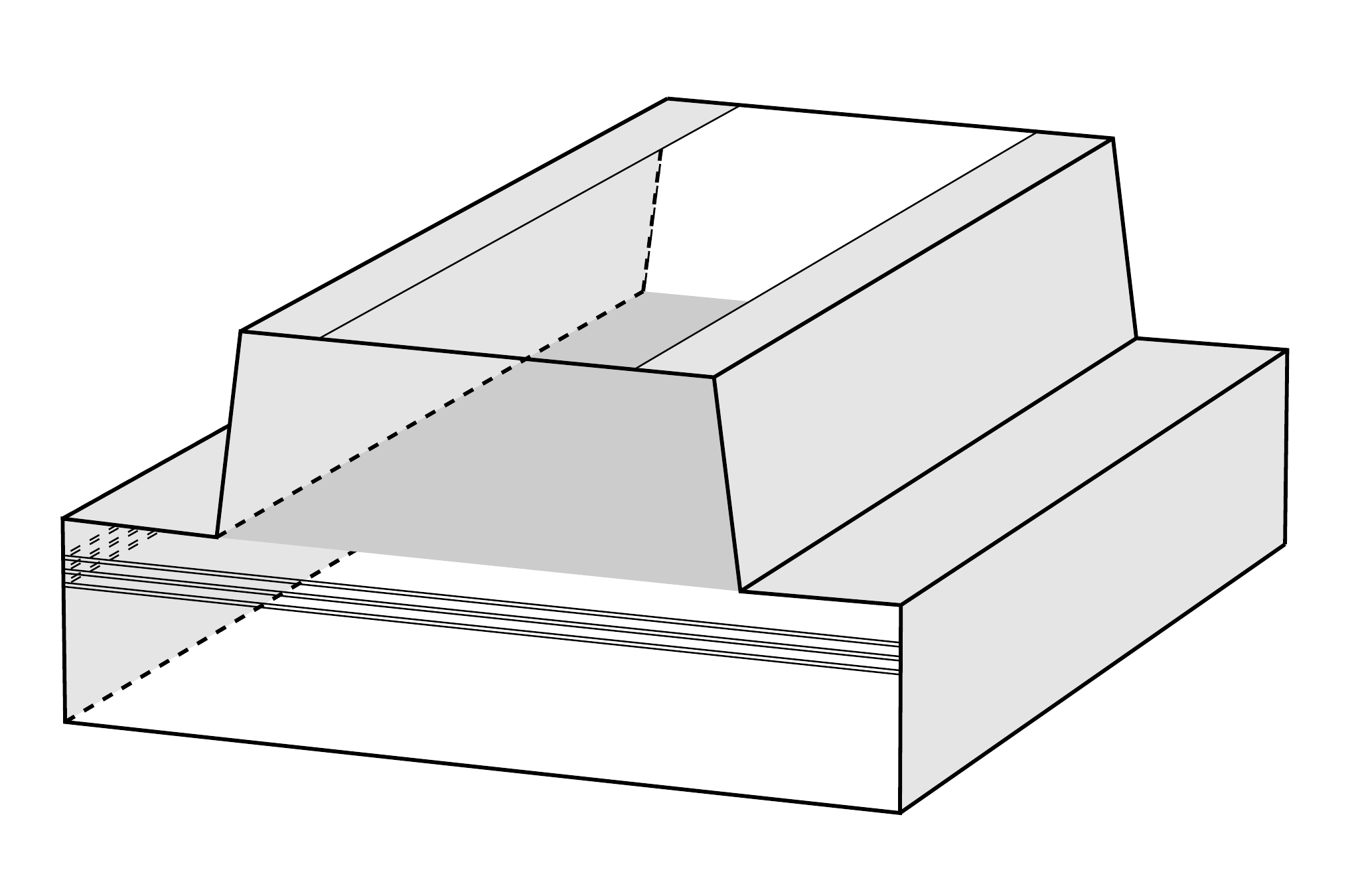}
  \caption{\label{fig-technoa} Scheme of a ridge waveguide
    quantum well laser (detail
    $3.2\mu\text{m} \times 1.5 \mu\text{m} \times
    4\mu\text{m}$). The device has two material layers, the
    material interface is the darkly shaded plane. The top and
    bottom of the structure are subject to Dirichlet boundary
    conditions for the eletrostatic potential $\varphi$, the
    remaining boundary carries Neumann boundary conditions
    (lightly shaded; the frontal area is kept transparent). A
    triple quantum well structure induced by different material
    layers is indicated in the lower part, corresponding to the
    doping $\fd$.}
\end{figure}

\begin{assu}[Geometry, extended]
  \label{ass:geometry-extend}
  The set
  $\Omega \subset \R^3$ is a bounded domain and satisfies the
  \emph{thickness} condition: There exist constants $0 < c \leq C < 1$
  such that
  \begin{equation}
    \label{eq:thickness}
    c \leq \frac{|B_r(x) \cap \Omega|}{|B_r(x)|} \leq C \qquad (x \in
    \partial\Omega, \, r \in (0,1]).
  \end{equation}
  Moreover, the
  following additional properties hold true for the boundary $\partial\Omega$:
  \begin{enumerate}
  \item $D \subseteq \partial\Omega$ is a closed $(d-1)$-set with
    $\cH_2(D) > 0$. The relative boundary $\partial D$ of $\Diri$ in
    $\partial\Omega$ is a $(d-2)$-set.
  \item There are Lipschitz coordinate charts available around
    $\overline{\partial\Omega \setminus D}$, that is, for every
    $x \in \overline{\partial\Omega \setminus D}$, there is an open
    neighborhood $\cU$ of $x$ and a bi-Lipschitz mapping
    $\phi_x \colon \cU \to (-1,1)^d$ such that $\phi_x(x) = 0$ and
    $\phi_x(\cU\cap\Omega) = (-1,0) \times (-1,1)^{d-1}$. 
  \item $ \Pi \subset \Omega$ is a Lipschitz surface, not necessarily
    connected, which forms a $(d-1)$-set.
  \end{enumerate}
\end{assu}

\begin{rem}
  \label{rem:extended-stronger}
  We emphasize the condition $C<1$ in the thickness
  condition~\eqref{eq:thickness} in the foregoing
  assumption. This requirement makes the thickness condition
  strictly stronger than the \emph{interior} thickness condition
  for $\partial\Omega$ which is equivalent $\Omega$ being
  $d$-regular as mentioned in
  \cref{rem:ahlfors-thickness}. In fact, the thickness
  condition~\eqref{eq:thickness} implies that both $\Omega$ and
  $\Omega^c$ are $d$-regular~(\cite[Ex.~2.4]{BM19}). In
  particular, \cref{ass:geometry-extend} always
  implies \cref{a-geometry}.
\end{rem}

\cref{ass:geometry-extend} defines the general geometric
framework for this section which however is restricted implicitly
by \cref{assu:iso} below. We are convinced that
this setting is sufficiently broad to cover (almost) all relevant
semiconductor geometries, in particular in view of the arrangement of
$\Diri$ and $\Neumann$. Please see also the more elaborate
\cref{r-kommentar} on this topic below. 

The second-order (elliptic) differential operators occurring
in~\eqref{vanRoos} will of course be considered in their weak form
introduced in \cref{def:div-grad} with the Robin boundary
form realized as in \cref{def:boundary-operator}. We pose
the following assumptions on their data: 

\begin{assu} \label{assu:iso}  We have $\eps,\mu_1,\mu_2 \in
  \cC(c_\bullet,c^\bullet)$ and $\eps_\Neumann \in
  L^\infty(\Neumann;\cH_2)$ and all these functions are
  \emph{real}. Moreover, the following additional properties hold true:
  \begin{enumerate}[(i)]
  \item \label{item:assu:iso:iso} There is a common integrability exponent $q \in (3,4)$ such
    that 
    \begin{align} \label{e-iso1} -\nabla \cdot \varepsilon \nabla +
      \tr_\Neumann^* \eps_\Neumann \tr_\Neumann
      & \in \cLiso\bigl(W^{1,q}_D(\Omega)\to W^{-1,q}_D(\Omega)\bigr)
    \intertext{and}
    \label{e-iso02} -\nabla \cdot \mu_k \nabla
    & \in \cLiso\bigl(W^{1,q}_D(\Omega) \to W^{-1,q}_D(\Omega)\bigr) \quad (k=1,2).
    \end{align} 
  \item\label{item:assu:iso:mult} There is
    $\vartheta \in (0,1-\frac3q)$ such that
    $\varepsilon_{ij} \in \cM(H^{\vartheta,q}(\Omega))$ and
    $(\mu_1)_{ij}, (\mu_2)_{ij} \in \cM(H^{\vartheta,q}(\Omega))$.
  \end{enumerate}
\end{assu}

See also \cref{def:coefficient-functions,def:multiplier} for
the $\cC(c_\bullet,c^\bullet)$ and multiplier notions. Note
moreover that due to the assumption $q \in (3,4)$, we have
$1-\frac3q < \frac1q = \frac1q \wedge \frac1{q'}$. Finally, we
point out that while we pose quite similar assumptions on $\eps$
and $\mu_1,\mu_2$, the assumptions are used in a quite different
way. For
$-\nabla\cdot\eps\nabla+ \tr_\Neumann^* \eps_\Neumann
\tr_\Neumann$, they enable us to use the extrapolated elliptic
regularity result in \cref{thm:invert-extrapol}. For
$-\nabla \cdot \mu_k \nabla$, the isomorphism
assumption~\eqref{e-iso02} will allow to determine the domains
of certain fractional powers of these operators which are of
interest for classical parabolic theory for semilinear equations
such as~\eqref{CuCo-eq}, see \cref{l-ineter} below. On the other
hand, the multiplier assumption on $\mu_k$ is used to deal with
the drift-structure induced by the fluxes $j_k$ as defined
in~\eqref{eq:curr-dens}.

Whenever we refer to the integrability $q$ from now on, a fixed
number from \cref{assu:iso} is meant.  

\begin{rem} \label{r-kommentar}
  \begin{enumerate}[(i)]
  \item Properties~\eqref{e-iso1} and~\eqref{e-iso02} remain true for
    all $\tilde q \in [2,q)$ by the Lax-Milgram lemma and
    interpolation
    (\cref{prop:interpolation-zero-trace}). In particular,
    the set of indices $q \geq 2$ such that~\eqref{e-iso1}
    and~\eqref{e-iso02} holds true always forms an interval. Thus it
    is sufficient to know that each of the operators~\eqref{e-iso1}
    and~\eqref{e-iso02} is an isomorphism for \emph{some} $q>3$ in
    order to find a \emph{common} $q$. Let us moreover note that in
    the presence of mixed boundary conditions one cannot expect
    $q \geq 4$ in \cref{assu:iso}~\ref{item:assu:iso:iso}
    when $\Diri$ and $\Neumann$ meet due to the counterexample by
    Shamir~\cite[Introduction]{shamir}.
  \item \cref{assu:iso}~\ref{item:assu:iso:iso} is fulfilled
    by very general classes of layered structures and additionally, if
    $\Diri$ and its complement $\Neumann$ do not meet in a too wild
    manner, for the most relevant model
    settings. (See~\cite{RoChristJo} for the latter.) A global
    framework has recently been established in~\cite{disser}.
    However, \cref{assu:iso}~\ref{item:assu:iso:iso} is
    indeed a restriction on the class of admissible coefficient
    functions $\varepsilon$ and $\mu_k$. For instance, it is typically
    not satisfied if three or more different materials meet at one
    edge.
  \item Note that it is typically not restrictive to assume that all
    three differential operators in~\eqref{e-iso1} and~\eqref{e-iso02}
    provide topological isomorphisms \emph{at once} if one of them does, since this
    property mainly depends on the (possibly) discontinuous
    coefficient functions versus the geometry of $\Diri$. This is
    determined by the material properties of the device $\Omega$,
    i.e., the coefficient functions $\mu_1, \mu_2, \varepsilon$ will
    often exhibit similar discontinuities and degeneracy.
  \item The multiplier assumption in
    \cref{assu:iso}~\ref{item:assu:iso:mult} is a very broad
    one and certainly fulfilled in the context of realistic
    semiconductor structures. Recall that, as seen in
    \cref{rem:multipliers}, the multiplier assumptions on
    $\mu_1,\mu_2$ and $\eps$ hold in fact for \emph{all}
    differentiability orders $\tau \in [0,\vartheta]$.
  \end{enumerate}
\end{rem}

\subsubsection[Gradient-dependent and interfacial
recombination]{Assumptions on recombination terms}
\label{Reco}
%
We next give the assumptions for the recombination terms
$\reaction^\Omega,$ $\reaction^\Pi, r^\Neumann$
in~\eqref{CuCo-eq}. For convenience, we introduce
\begin{equation*}
  \bfW^{1,q}_D(\Omega) \defn W^{1,q}_D(\Omega) \times
      W^{1,q}_D(\Omega).
\end{equation*}
Note that by
\emph{locally Lipschitzian} we mean that the corresponding
function is Lipschitz continuous on bounded sets.

\begin{assu} \label{assu:recomb} All reaction terms
  $\reaction^\Omega,$ $\reaction^\Pi, r^\Neumann$ map
  \emph{real} functions to again \emph{real ones}. Moreover:
  \begin{enumerate}[(i)]
  \item The bulk reaction term $\reaction^\Omega$ is a locally
    Lipschitzian mapping
    \begin{equation*}
      \reaction^\Omega \colon \bfW^{1,q}_D(\Omega) \times
      W^{1,q}_D(\Omega)   \ni (u, 
      \varphi) \mapsto r^\Omega(u, \varphi) \in L^{\frac {q}{2}}(\Omega). 
    \end{equation*} 
  \item The reaction term $\reaction^\Neumann$  on $\Neumann$ is
  a locally Lipschitzian mapping
  \begin{equation*}
    r^\Neumann \colon\bfW^{1,q}_D(\Omega) \times
    W^{1,q}_D(\Omega) \ni (u, \varphi) \mapsto r^\Neumann(u,
    \varphi) 
    \in L^{4}(\Neumann;\sigma).
  \end{equation*}
\item The interfacial reaction term $\reaction^\Pi$ on $\Pi$ satisfies
  the same assumption as $\reaction^\Neumann$ does, \emph{mutatis mutandis}.
  \end{enumerate}  
\end{assu}

The choice of integrability $4$ on $\Gamma$ and $\Pi$, respectively,
is connected to $q < 4$ in \cref{assu:iso}. This can be seen
in \cref{t-propX} below.

It is easy to see that the recombination terms $r^\Omega_{\text{SRH}}$
and $r^\Omega_{\text{Auger}}$ introduced in~\eqref{e-shockleyrh}
and~\eqref{e-auger} satisfy \cref{assu:recomb}. On the other
hand, validating the same for the Avalanche generation term, depending on the
electric field $\varphi$ and the currents $j_k$, is nontrivial, but we
indeed find:
\begin{lemma} \label{l-Lipsch} The Avalanche recombination term
  $r^\Omega_{\text{Ava}}$ defined in~\eqref{e-aval} satisfies
  \cref{assu:recomb}.
\end{lemma}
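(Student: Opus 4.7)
My plan is to reduce the estimate for $r^\Omega_{\text{Ava}}$ to the Lipschitz analysis of an auxiliary scalar function and then integrate with Hölder. Each summand has the shape $F_k(j_k,\nabla\varphi)$ with
\[
F_k(j,w) \defn |j|\exp\!\Bigl(-a_k|j|/|w\cdot j|\Bigr)\quad(w\cdot j\neq 0),\qquad F_k(j,w)\defn 0\ \text{otherwise},
\]
and $j_k=\mu_k(\nabla u_k+(-1)^k u_k\nabla\varphi)$. Since $q>3=d$, Sobolev embedding gives $W^{1,q}_D(\Omega)\hookrightarrow L^\infty(\Omega)$, hence on any ball of radius $M$ in $\bfW^{1,q}_D(\Omega)\times W^{1,q}_D(\Omega)$ the quantities $u_k,\tilde u_k$ are uniformly bounded in $L^\infty$ and $j_k,\tilde j_k,\nabla\varphi,\nabla\tilde\varphi$ are uniformly bounded in $L^q(\Omega)$ by a constant depending only on $M$ and $\|\mu_k\|_\infty$.

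The main step is to show that the auxiliary function $\Phi(x,y)\defn x\,\mathrm{e}^{-x/|y|}$ for $y\neq 0$, $\Phi(x,0)\defn 0$, is globally Lipschitz on $[0,\infty)\times\R$. Writing $t\defn x/|y|\geq 0$, direct differentiation (away from $\{y=0\}$) yields
\[
|\partial_x\Phi|=|(1-t)\mathrm{e}^{-t}|\leq 1,\qquad |\partial_y\Phi|=t^2\mathrm{e}^{-t}\leq 4/\mathrm{e}^2,
\]
so $\Phi$ is Lipschitz on each open half-plane $\{y>0\},\{y<0\}$; since $\Phi$ vanishes on $\{y=0\}$ and tends to $0$ as $y\to 0$, a triangle-inequality argument through $(x,0)$ and $(\tilde x,0)$ handles points on opposite sides of $y=0$. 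Noting that $F_k(j,w)=\Phi(|j|,(w\cdot j)/a_k)$, and using $\bigl||j|-|\tilde j|\bigr|\leq|j-\tilde j|$ together with $|w\cdot j-\tilde w\cdot\tilde j|\leq|w||j-\tilde j|+|\tilde j||w-\tilde w|$, I obtain the pointwise estimate
\[
|F_k(j,w)-F_k(\tilde j,\tilde w)|\leq C\bigl(1+|w|+|\tilde j|\bigr)\bigl(|j-\tilde j|+|w-\tilde w|\bigr).
\]

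Applying this pointwise with $w=\nabla\varphi$, $\tilde w=\nabla\tilde\varphi$, $j=j_k$, $\tilde j=\tilde j_k$, taking $L^{q/2}(\Omega)$ norms, and invoking Hölder's inequality with $\tfrac{1}{q/2}=\tfrac{1}{q}+\tfrac{1}{q}$ yields
\[
\|F_k(j_k,\nabla\varphi)-F_k(\tilde j_k,\nabla\tilde\varphi)\|_{L^{q/2}}\lesssim \bigl(1+\|\nabla\varphi\|_{L^q}+\|\tilde j_k\|_{L^q}\bigr)\bigl(\|j_k-\tilde j_k\|_{L^q}+\|\nabla\varphi-\nabla\tilde\varphi\|_{L^q}\bigr).
\]
The first factor is bounded by $C(M)$. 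For the second, expanding
\[
j_k-\tilde j_k=\mu_k\bigl[(\nabla u_k-\nabla\tilde u_k)+(-1)^k\bigl((u_k-\tilde u_k)\nabla\varphi+\tilde u_k(\nabla\varphi-\nabla\tilde\varphi)\bigr)\bigr]
\]
and using $\|u_k-\tilde u_k\|_\infty\lesssim\|u_k-\tilde u_k\|_{W^{1,q}}$ yields $\|j_k-\tilde j_k\|_{L^q}\leq C(M)\,\|(u-\tilde u,\varphi-\tilde\varphi)\|$, and summing over $k$ delivers the claimed local Lipschitz estimate.

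The main obstacle is the pointwise Lipschitz analysis of the Avalanche profile: the exponent $-a_k|j|/|w\cdot j|$ blows up precisely where $w\cdot j\to 0$, and $r^\Omega_{\text{Ava}}$ is not differentiable there. The cleanest way to handle this is the change-of-variables $t=a_k|j|/|w\cdot j|$ which turns the awkward singularity into the bounded profile $t^2\mathrm{e}^{-t}$, so that all pointwise Lipschitz constants are \emph{uniform}; only after this can Hölder and Sobolev be applied routinely.
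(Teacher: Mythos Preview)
Your proof is correct and follows the same two-step strategy as the paper: first establish that the currents $(u,\varphi)\mapsto j_k$ are locally Lipschitz from $\bfW^{1,q}_D(\Omega)\times W^{1,q}_D(\Omega)$ into $L^q(\Omega)$ via the embedding $W^{1,q}_D(\Omega)\hookrightarrow L^\infty(\Omega)$, and then control the Avalanche profile by a pointwise Lipschitz estimate combined with H\"older's inequality $L^q\cdot L^q\hookrightarrow L^{q/2}$. The only difference is one of presentation: the paper does not carry out the pointwise analysis itself but quotes the resulting estimate from \cite[Lem.~3.9]{vanroos3d}, whereas you supply it directly by introducing the scalar auxiliary $\Phi(x,y)=x\,e^{-x/|y|}$ and observing that the substitution $t=x/|y|$ converts the singular exponent into the uniformly bounded derivatives $(1-t)e^{-t}$ and $t^2e^{-t}$. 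This is exactly the mechanism underlying the cited lemma, so your argument is a self-contained version of the paper's proof rather than a different route; the final $L^{q/2}$ Lipschitz bound you obtain is equivalent to the one the paper quotes.
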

\begin{proof}
  The lemma is proved in~\cite[Ch.~3.4]{vanroos3d}. More precisely,
  the current densitites
  \begin{equation*}
    \bfW^{1,q}_D(\Omega) \times W^{1,q}_D(\Omega)   \ni (u,
    \varphi) \mapsto j_k = \mu_k\bigl(\nabla u_k + (-1)^ku_k
    \nabla\varphi\bigr) \in L^q(\Omega)
  \end{equation*}
  are locally Lipschitz continuous via the estimate
  \begin{multline*}
    \bigl\|j_k(u,\varphi)-j_k(v,\psi)\bigr\|_{L^q(\Omega)} \leq
    \|\mu_k\|_{L^\infty(\Omega)}\left[\|\nabla\psi\|_{L^q(\Omega)}
      \bigl\|u_k-v_k\bigr\|_{L^{\infty}(\Omega)}  
    \right. \\ + \left. \bigl\|\nabla u_k - \nabla
      v_k\bigr\|_{L^q(\Omega)} +
      \|u_k\|_{L^\infty(\Omega)}\bigl\|\nabla\varphi
      -\nabla\psi\bigr\|_{L^q(\Omega)}\right]
  \end{multline*}
  and the embedding $W^{1,q}_D(\Omega) \embeds L^\infty(\Omega)$ due to
  $q>d=3$. It remains to connect this with~\cite[Lem.~3.9]{vanroos3d}
  where
  \begin{multline*}
    \bigl\|r^\Omega_{\text{Ava}}(u,\varphi) -
    r^\Omega_{\text{Ava}}(v,\psi)\bigr\|_{L^{\frac{q}2}(\Omega)}
    \\\lesssim 
    \|\nabla\varphi\|_{L^q(\Omega)}\left(\bigl\|j_1(u,\varphi)
    - j_1(v,\psi)\bigr\|_{L^q(\Omega)}+\bigl\|j_2(u,\varphi)
    - j_2(v,\psi)\bigr\|_{L^q(\Omega)}\right) \\ + 
    \left(\|j_1(v,\psi)\|_{L^q(\Omega)}
    +\|j_2(v,\psi)\|_{L^q(\Omega)}\right)\bigl\|\nabla\varphi
    - \nabla\psi\bigr\|_{L^q(\Omega)}
  \end{multline*}
  is shown.
\end{proof}

\begin{rem}
  \label{rem:similarity-quad}
  It is imperative to compare the very last estimate in the foregoing proof
  to the Lipschitz estimate for the quadratic gradient function
  \begin{equation*}
    \bigl\||\nabla v_1|^2 - |\nabla
    v_2|^2\bigr\|_{L^{\frac{q}2}(\Omega)} \leq \left(\|\nabla
    v_1\|_{L^q(\Omega)}+\|\nabla v_2\|_{L^q(\Omega)}\right) \|\nabla
    v_1-\nabla v_2\|_{L^q(\Omega)},
  \end{equation*}
  which is of very similar structure. This is the connection to the
  quadratic gradient nonlinearity $v \mapsto |\nabla v|^2$ which was
  mentioned in the introduction.
\end{rem}

\subsubsection{Assumptions on auxiliary data}

Lastly, we give the assumptions on the doping $\fd$. It permits dopings
which live in the bulk \emph{and}, possibly, on $2$-dimensional
surfaces, see \cref{t-propX} below. We comment on the actual
requirement in \cref{rem:doping-assumption} below.
\begin{assu} \label{assu:poi-diri-bv} The doping $\fd$ belongs to the
  space $H^{-\frac {3}{q },q}_D(\Omega)$.
\end{assu}

\subsection{Existence and uniqueness for the abstract semilinear equation}\label{sec:exist-uniq-abstr}

It was already explained in the introduction that we intend to
solve the van Roosbroeck system~\eqref{vanRoos} by eliminating
the electrostatic potential $\varphi$ in~\eqref{CuCo-eq} and
\eqref{eq:curr-dens} as a function of the densities $u$, thereby
considering~\eqref{CuCo-eq} as a \emph{semilinear} parabolic
equation in the densities. Having this in mind, we give a brief
discussion on the question which Banach space
$\bfX = X \oplus X$ will be adequate to consider this parabolic
equation in, based on the structural- and regularity properties
of the unknowns $u,\varphi$ and the data such as $\fd$.

\begin{itemize}
\item In view of the jump condition on the surface $\Pi$ on the fluxes
  $j_k$ in~\eqref{CuCo-eq}, it cannot be expected that
  $\dive j_k$ is a function.  This excludes spaces of type
  $L^p(\Omega)$.  In addition, the space $X$ should be large
  enough to include
  distributional objects, so that the the inhomogeneous Neumann datum
  $r^\Neumann$ in the current-continuity equations~\eqref{CuCo-eq} and
  the surface recombination term $r^\Pi$ can be included in the
  right-hand side of the current continuity equations.
\item For our analysis, we require an adequate parabolic theory
  for the divergence operators on $\bfX$.  Due to the
  \emph{non-smooth geometry}, the \emph{mixed boundary
    conditions} and \emph{discontinuous coefficient functions},
  this is nontrivial. The minimum needed is that the operators
  $\nabla \cdot \mu_k \nabla$ generate \emph{analytic} semigroups
  on $X$.
\item For the handling of the squared gradient nonlinearity or other
  functions of gradients in the Avalanche and other recombination
  terms, it is imperative to have $\nabla u_k(t)$ in $L^q(\Omega)$
  \emph{in every time point} $t$ at ones disposal in order to apply
  standard semilinear parabolic theory, see e.g.~\cite[Ch.~3.3]{henry}
  or~\cite[Ch.~7]{luna}. Hence, the Banach space $X$ needs to be such
  that an interpolation space between the domain of
  $\nabla\cdot\mu_k\nabla$ in $X$ and $X$ itself embeds continuously
  into $W^{1,q}(\Omega)$. But this excludes spaces of type
  $X= W_D^{-1,q}(\Omega)$ since the domain of $\nabla\cdot\mu_k\nabla$
  there is at best $W^{1,q}_D(\Omega)$ 
  (\cref{assu:iso}~\ref{item:assu:iso:iso}). With this
  strategy, at the same time, the space $X$ needs to be sufficiently
  large for the embedding $L^{q/2}(\Omega) \embeds X$ to hold to
  include the pointwise quadratic gradient.
\end{itemize}

We will choose $X$ as an interpolation space between
$W^{-1,q}_D(\Omega)$ and $L^q(\Omega)$. This will yield a
framework in which the requirements listed above are indeed
satisfied, see \cref{t-propX,lem:div-op-X,l-ineter} below.

To this end, we first quote the \emph{nonsymmetric}
interpolation result which will allow us to identify the
designated (interpolation) space $X$ with a space from the
Bessel scale. This proposition is the only point where the
strengthened geometric assumptions in
\cref{ass:geometry-extend} compared to
\cref{a-geometry} are needed. The primal interpolation
result is quoted from~\cite{BM19}, and the dual scale is
obtained in the same manner as done for proof of
\cref{cor:interpolation-dual}.

\begin{proposition}[{Interpolation~\cite[Thm.~1.1]{BM19}}]
  \label{prop:nonsymmetric-interpolation}
  Let $p \in (1,\infty)$ and $\theta \in (0,1)$, and let
  $E \subset \overline\Omega$ be a $(d-1)$-set. Then
  \begin{equation*}
    \bigl[W^{1,p}_E(\Omega),L^p(\Omega)\bigr]_{\theta} =
    \begin{cases}
      H^{1-\theta,p}_E(\Omega) & \text{if}~\theta < 1-\frac1p
      \\[0.5em]
      H^{1-\theta,p}(\Omega) & \text{if}~\theta > 1-\frac1p
    \end{cases}
  \end{equation*}
  and accordingly
   \begin{equation*}
     \bigl[W^{-1,p}_E(\Omega),L^p(\Omega)\bigr]_{\theta} =
     \left(\bigl[W^{1,p'}_E(\Omega),L^{p'}(\Omega)\bigr]_{\theta}\right)^\star
     = 
    \begin{cases}
      H^{\theta-1,p}_E(\Omega) & \text{if}~\theta < \frac1p
      \\[0.5em]
      H^{\theta-1,p}(\Omega) & \text{if}~\theta > \frac1p.
    \end{cases}
  \end{equation*}
\end{proposition}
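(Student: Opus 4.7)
The plan is to recognize this proposition as two assertions, one a direct citation and one a duality consequence, so the proof reduces to carefully assembling known ingredients.

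For the primal identity, I would simply invoke Theorem~1.1 of \cite{BM19}, whose hypotheses are exactly those collected in \cref{ass:geometry-extend}: the two-sided thickness condition~\eqref{eq:thickness} ensures both $\Omega$ and $\Omega^c$ are $d$-regular (cf.\ \cref{rem:extended-stronger}), $E$ is a $(d-1)$-set, and Lipschitz charts are available around $\overline{\partial\Omega\setminus E}$ in the relevant cases. This is the only point in the whole section where the strengthened geometric setup is actually used, as already announced right before the statement.

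For the dual identity, I would reuse verbatim the argument from the proof of \cref{cor:interpolation-dual}. Concretely, take $A_0 \defn W^{1,p'}_E(\Omega)$ and $A_1 \defn L^{p'}(\Omega)$, note that $A_0 \cap A_1 = A_0$ is dense in both spaces (it contains $C_D^\infty(\Omega)$, which is dense in $L^{p'}(\Omega)$, and is trivially dense in itself), and observe that both $A_i$ are reflexive. The duality theorem for complex interpolation~\cite[Ch.~1.11.3]{Triebel} then gives
\begin{equation*}
  \bigl[A_0^\star,A_1^\star\bigr]_\theta = \bigl[A_0,A_1\bigr]_\theta^\star,
\end{equation*}
and the retraction-coretraction trick $\psi \mapsto [f \mapsto \langle \psi,\overline f\rangle]$ upgrades this identity from dual to anti-dual spaces, so that the left hand side becomes $[W^{-1,p}_E(\Omega),L^p(\Omega)]_\theta$ and the right hand side becomes the anti-dual of $[W^{1,p'}_E(\Omega),L^{p'}(\Omega)]_\theta$. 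Finally, applying the already-established primal identity to the $p'$-scale and reading off the anti-dual according to our conventions $H^{-s,p}_E(\Omega) \defn H^{s,p'}_E(\Omega)^\star$ and $H^{-s,p}(\Omega) \defn H^{s,p'}(\Omega)^\star$ yields the stated case distinction; the split at $1-\theta = \tfrac1{p'}$ in the primal $p'$-scale is precisely the split at $\theta = \tfrac1p$ in the dualized scale.

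There is no real obstacle here: the deep content is bundled into \cite[Thm.~1.1]{BM19}, and the residual work is the standard duality/retraction bookkeeping already carried out in \cref{cor:interpolation-dual}. The only potential pitfall is to verify that the density and reflexivity hypotheses of \cite[Ch.~1.11.3]{Triebel} really do hold for the couple $(W^{1,p'}_E(\Omega),L^{p'}(\Omega))$, but this is immediate as indicated above.
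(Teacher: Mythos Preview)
Your proposal is correct and matches the paper's own treatment essentially verbatim: the paper states just before the proposition that ``the primal interpolation result is quoted from~\cite{BM19}, and the dual scale is obtained in the same manner as done for proof of \cref{cor:interpolation-dual}'', which is precisely your two-step plan. Your additional remarks on verifying the density and reflexivity hypotheses are sound and in fact slightly more explicit than what the paper spells out.
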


Moreover, let us reiterate the following immediate consequence of
\cref{assu:iso} and \cref{thm:invert-extrapol}, where
$\vartheta$ is the number from
\cref{assu:iso}~\ref{item:assu:iso:mult}:

\begin{lemma}\label{lem:sneiberg-follow}
  There is a number $\bar s \in (0,\vartheta]$ such that the operator
  $-\nabla\cdot\eps\nabla + \tr_\Neumann^*\eps_\Neumann\tr_\Neumann$
  is a topological isomorphism between $H^{1+s,q}_D(\Omega)$ and
  $H^{s-1,q}_D(\Omega)$ for all $s \in [0,\bar s)$.
\end{lemma}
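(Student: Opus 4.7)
The plan is to apply \cref{thm:invert-extrapol} directly. I take $p = q$, $\tau = \vartheta$ (from \cref{assu:iso}(ii)), $\lambda = 0$, and the lower-order coefficients $\beta_{\dive}, \beta_g, \eta$ all vanishing; the Robin data is $E = \Neumann$ and $\varrho = \eps_\Neumann$. Then the operator $A$ of \eqref{eq:diff-operator} becomes precisely $-\nabla\cdot\eps\nabla + \tr_\Neumann^*\eps_\Neumann\tr_\Neumann$, and the conclusion \eqref{e-extiso} delivers the stated isomorphism property even on the symmetric interval $(-\bar s,\bar s)$, from which the one-sided interval $[0,\bar s)$ claimed in the lemma is immediate.

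The first routine task is to verify that $\vartheta$ lies in the admissible range $(0, \tfrac{1}{p} \wedge \tfrac{1}{p'})$. Since $q \in (3,4)$, we have $\tfrac{1}{q} \wedge \tfrac{1}{q'} = \tfrac{1}{q}$, and the bound $\vartheta < 1 - \tfrac{3}{q}$ from \cref{assu:iso}(ii) yields $\vartheta < \tfrac{1}{q}$ because $q < 4$. Next I verify the remaining data assumptions. The membership $\eps_{ij} \in \cM(H^{\vartheta,q}(\Omega))$ is exactly \cref{assu:iso}(ii); the companion membership in $\cM(H^{\vartheta,q'}(\Omega))$, which is what \cref{lem:div-grad-extend} needs for the adjoint leg of the extension, follows from the reality and $L^\infty$-boundedness of $\eps$ combined with the duality/interpolation considerations collected in \cref{rem:multipliers} (at worst after replacing $\vartheta$ by a strictly smaller value, still leaving room in $(0,\tfrac1q)$). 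The set $\Neumann$ is a $(d-1)$-set by \cref{r-d-1}, applied via the Lipschitz charts around $\overline{\partial\Omega\setminus D}$, and $\eps_\Neumann \in L^\infty(\Neumann;\cH_2)$ is part of \cref{assu:iso}. Finally, the pivotal isomorphism hypothesis for $\lambda=0$ on $W^{1,q}_D(\Omega)\to W^{-1,q}_D(\Omega)$ is precisely \eqref{e-iso1}.

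With all hypotheses in place, \cref{thm:invert-extrapol} produces some $\bar s \in (0,\vartheta]$ for which \eqref{e-extiso} holds, and restriction to $s \in [0,\bar s)$ gives the assertion. The only non-cosmetic obstacle I anticipate is the verification of the multiplier property on the dual Bessel scale $H^{\vartheta,q'}(\Omega)$; self-adjointness of pointwise multiplication by a real bounded function only gives a multiplier on $H^{-\vartheta,q'}(\Omega)$ for free (via duality), so to land on the positive scale one must either interpolate against $L^{q'}$ where $\eps$ is a multiplier trivially, exploit the embedding $\cM(H^{\vartheta,q}(\Omega)) \embeds \cM(H^{\sigma,q}(\Omega))$ for $\sigma\leq\vartheta$ noted in \cref{rem:multipliers} and pair it with finer properties of $\eps$ (piecewise regularity of the coefficient, characteristic-function structure, Besov-type refinements), or tacitly strengthen \cref{assu:iso}(ii) to require the multiplier bound on both integrability scales from the outset. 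All remaining steps are straightforward bookkeeping against \cref{thm:invert-extrapol}.
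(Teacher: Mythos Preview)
Your proposal is correct and follows exactly the paper's approach: the paper presents this lemma as an ``immediate consequence of \cref{assu:iso} and \cref{thm:invert-extrapol}'' with no further proof, and your verification of the hypotheses of \cref{thm:invert-extrapol} is precisely what is needed. You are in fact more careful than the paper on one point: the multiplier requirement $\eps_{ij}\in\cM(H^{\vartheta,q'}(\Omega))$ from \cref{thm:invert-extrapol} is not explicitly part of \cref{assu:iso}(ii), and the paper glosses over this; your observation that the concrete sufficient conditions discussed in \cref{rem:multipliers} (H\"older regularity, characteristic functions of nice sets) are symmetric in $q$ and $q'$, or that one may simply strengthen the assumption, is the correct way to close this cosmetic gap.
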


Finally, we define the Banach space $\bfX$ in which we intend to 
investigate the parabolic equation: 

\begin{definition} \label{d-X} Let $\bar s$ be the number from
  \cref{lem:sneiberg-follow}. We fix $\tau \in (0,\bar s)$ and
  define
  \begin{equation*}
    X \defn \bigl[L^q(\Omega),W^{-1,q}_D(\Omega)\bigr]_{1-\tau,q} = H^{\tau-1,q}_D(\Omega)  \quad
    \text{and} \quad \bfX :=X \oplus X.
  \end{equation*}
  The identity of the interpolation space and $H^{\tau-1,q}_D(\Omega)$
  follows from \cref{prop:nonsymmetric-interpolation}.
\end{definition}

\begin{rem}\label{rem:doping-assumption}
  Due to the assumptions on $\vartheta$, we have 
  $\tau \in (0,1-\frac3q)$. In particular, $\tau-1 < -3/q$, thus
  $H^{-3/q,q}(\Omega) \embeds H^{\tau-1,q}_D(\Omega) = X$, and so
  $\fd \in X$ by \cref{assu:poi-diri-bv}.
\end{rem}

It remains to verify that $X$ or $\bfX$ satisfy the requirements we
established above. The  first lemma joins 
\cref{rem:doping-assumption} in showing that $X$ is sufficiently
large for our means.

\begin{lemma} \label{t-propX} There holds $L^{\frac{q}2}(\Omega)
  \embeds X$. Moreover, the adjoint trace mappings $\tr_\Neumann^* \colon
  L^4(\Neumann;\cH_2) \to X$ and $\tr_\Pi^* \colon L^4(\Pi;\cH_2) \to
  X$ give rise to continuous embeddings. 
\end{lemma}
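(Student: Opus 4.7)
The plan is to process all three claims simultaneously through duality: by \cref{d-X} and the definition of the dual Bessel scale, $X = H^{\tau-1,q}_D(\Omega)$ is the antidual of $H^{1-\tau,q'}_D(\Omega)$, so every claimed embedding reduces to a Sobolev-type mapping property for this latter space paired with an appropriate $L^p$ or $L^r$-space via Hölder duality.

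For the volume inclusion, I would first invoke the classical Bessel-scale Sobolev embedding (inherited on $\Omega$ per \cref{rem:restriction-def-embedding}), giving $H^{1-\tau,q'}_D(\Omega) \embeds L^{q/(q-2)}(\Omega)$. The scaling relation $\frac{1}{q'} - \frac{1-\tau}{3} \leq \frac{q-2}{q}$ simplifies to $\tau \leq 1 - \tfrac{3}{q}$, which is exactly what is guaranteed by $\tau < \bar s \leq \vartheta < 1 - \tfrac{3}{q}$ via \cref{d-X} and \cref{assu:iso}. An $L^{q/2}(\Omega)$-function then realizes an element of $X$ through the $L^{q/2}$--$L^{q/(q-2)}$ Hölder pairing; injectivity of the resulting identification is immediate by testing against $C_c^\infty(\Omega) \subset H^{1-\tau,q'}_D(\Omega)$.

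For the two trace adjoints, the mirror strategy requires continuity of $\tr_E \colon H^{1-\tau,q'}_D(\Omega) \to L^{4/3}(E;\cH_2)$ for $E \in \{\Neumann,\Pi\}$; the $L^4$--$L^{4/3}$ duality then delivers the desired adjoints into $X$. I would apply \cref{cor:slobo-domain-trace-op} with $s = 1-\tau$ and $p = q'$, using $E = \partial\Omega$ (a $2$-set by \cref{r-d-1}) followed by restriction to $\Neumann$, and $E = \Pi$ directly (a $2$-set by \cref{ass:geometry-extend}). The side conditions $sp < d$ and $s > \tfrac{1}{p}$ reduce to $\tau < \tfrac{1}{q}$, satisfied because $\tau < 1 - \tfrac{3}{q} < \tfrac{1}{q}$ when $q < 4$. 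The scaling $s + \tfrac{d-1}{r} = \tfrac{d}{p}$ gives target exponent $r = \tfrac{2q}{2q-3+q\tau}$, and the requirement $r \geq \tfrac{4}{3}$ amounts to $\tau \leq \tfrac{3}{q} - \tfrac{1}{2}$. Since $\partial\Omega$ and $\Pi$ are bounded and thus of finite $\cH_2$-measure, $L^r \embeds L^{4/3}$ on these sets is automatic. Injectivity of the adjoints then follows from a standard density argument: smooth functions of compact support inside $\Neumann$ (respectively $\Pi$) are dense in the corresponding $L^{4/3}$-space, and each such function lifts via the bi-Lipschitz boundary charts from \cref{ass:geometry-extend} to an element of $H^{1-\tau,q'}_D(\Omega)$ with matching trace.

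The only real bookkeeping obstacle is to confirm that the single constraint $\tau < 1 - \tfrac{3}{q}$ simultaneously encodes both thresholds $\tau \leq 1 - \tfrac{3}{q}$ (volume) and $\tau \leq \tfrac{3}{q} - \tfrac{1}{2}$ (boundary). The elementary observation $1 - \tfrac{3}{q} < \tfrac{3}{q} - \tfrac{1}{2} \iff q < 4$ settles this; since $q \in (3,4)$ per \cref{assu:iso}, the boundary threshold is the slacker of the two and is comfortably implied by the volume threshold. Beyond this exponent arithmetic, no genuine analytic difficulty is expected.
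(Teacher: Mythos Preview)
Your approach is correct and follows the same duality strategy as the paper: reduce both the volume and boundary embeddings to Sobolev/trace mapping properties of the predual space, then take adjoints. The paper factors through the intermediate space $H^{3/q,q'}_D(\Omega)$ (via \cref{rem:doping-assumption}) before invoking the trace corollary, whereas you work directly from $H^{1-\tau,q'}_D(\Omega)$ and track the exponent arithmetic explicitly; both routes arrive at the same place.

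One minor bookkeeping point: \cref{cor:slobo-domain-trace-op} is stated for $W^{s,p}(\Omega)$, not $H^{s,p}(\Omega)$, so you cannot literally apply it with $s = 1-\tau$ to $H^{1-\tau,q'}_D(\Omega)$. You need an intermediate embedding $H^{1-\tau,q'}(\Omega) \embeds W^{s',q'}(\Omega)$ for some $s' \in (\tfrac{1}{q'}, 1-\tau)$ first (this is exactly what the paper does). Since all your threshold inequalities are strict, the arithmetic survives with $s'$ in place of $1-\tau$, so this is a cosmetic fix rather than a real gap.
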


\begin{proof} The first embedding follows from taking the adjoint of
  the Sobolev embedding
  $H^{3/q,q'}_D(\Omega) \embeds L^{\frac{q}{q-2}}(\Omega)$ and the
  observation in \cref{rem:doping-assumption}.

  Continuity of the adjoint trace is proven
  in~\cite[Lem.~4.4]{vanroos3d} by showing that
  \begin{equation}
    \tr_\Neumann \colon H^{\frac{3}{q},q'}_D(\Omega) \to
    L^{\frac43}(\Neumann;\cH_2)  
    \quad \text{and} \quad \tr_\Pi
    \colon H^{\frac {3}{q},q'}_D(\Omega) \to
    L^{\frac43}(\Pi;\cH_2) \label{eq:trace-bessel}    
  \end{equation}
  are continuous, and then taking adjoints. We give a quick
  additional proof of~\eqref{eq:trace-bessel} based on the trace
  theorem from \cref{cor:slobo-domain-trace-op}: The
  condition $q \in (3,4)$ implies that $\frac1{q'} < \frac3q$,
  hence we can find $s \in (\frac1{q'},\frac3q)$ so that
  $H^{3/q,q'}(\Omega) \embeds W^{s,q'}(\Omega)$. Now
  \cref{cor:slobo-domain-trace-op} gives the result
  because it says that $\tr_\Neumann$ maps $W^{s,q'}(\Omega)$
  continuously into $L^{q'}(\Neumann)$ when $s > \frac1{q'}$; it
  remains only to observe that $q' > \frac43$. The reasoning for
  $\tr_\Pi$ is completely analogous because
  \cref{cor:slobo-domain-trace-op} is valid for
  $(d-1)$-regular sets $E \subset \overline\Omega$.
\end{proof}

\cref{t-propX} puts us in the position to establish the
functional-analytic setting for the van Roosbroeck
system~\eqref{vanRoos}. Recall also \cref{lem:sneiberg-follow}.

\begin{definition}[Solution concept]
  \label{def:func-ana-defn}
  Define the mapping $v \mapsto \varphi(v)$ by
  \begin{equation}\label{eq:ellipt-regu-potential}
    v \mapsto \varphi
    \defn \left(-\nabla\cdot\eps\nabla +
      \tr_\Neumann^*\eps_\Neumann\tr_\Neumann\right)^{-1}\bigl(\fd -
    v_1 + v_2\bigr) 
  \end{equation}
  and set
  \begin{equation*}
    r(v) \defn r^\Omega\bigl(v,\varphi(v)\bigr) + \tr_\Neumann^*
    r^\Neumann\bigl(v,\varphi(v)\bigr) + \tr_\Pi^* r^\Pi\bigl(v,\varphi(v)\bigr).
  \end{equation*}
  Then we say that a function $u = (u_1,u_2) \colon
  [0,T^\bullet) \to \bfX$ 
  is a \emph{solution to the van Roosbroeck
    system}~\eqref{vanRoos}, if $u(0) = u^0$ and
  \begin{equation*}
    u_k'(t) - \nabla \cdot \mu_k \nabla u_k(t) = (-1)^{k+1} \nabla\cdot
    u_k(t)\mu_k\nabla \varphi(u(t)) + r(u(t)) \quad \text{in}~X \quad (k=1,2)
  \end{equation*}
  for all $t \in (0,T_\bullet)$, where $T_\bullet \in (0,T]$.
\end{definition}

Before we prove existence and uniqueness of a solution in the
sense of \cref{def:func-ana-defn}, we further collect
some results about the elliptic operators
$-\nabla\cdot\mu_k\nabla$. In the second part, we make use of
the co-restriction of
$-\nabla\cdot\mu_k\nabla \colon W^{1,q}_D(\Omega) \to
W^{-1,q}_D(\Omega)$ to $L^q(\Omega)$, considered as a closed
operator in that space, and analogously for $X$.

\begin{lemma} \label{lem:div-op-X}
  \begin{enumerate}[(i)]
  \item The square root
    $(-\nabla\cdot\mu_k\nabla)^{-1/2}$ provides a topological
    isomorphism between $ W^{-1,q}_D(\Omega)$ and $L^q(\Omega)$.
    \label{item:div-op-X-a}
  \item The operators $\nabla \cdot \mu_k \nabla$ are generators
    of analytic semigroups and their negatives admit bounded
    imaginary powers on $L^q(\Omega)$ space, on
    $W^{-1,q}_D(\Omega)$, and also on
    $X$. \label{item:div-op-X-b}
  \end{enumerate}
\end{lemma}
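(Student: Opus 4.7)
The plan is to treat parts~(i) and (ii) together, since the square-root statement of (i) will follow from the BIP property once it is established in (ii). Write $A_k \defn -\nabla\cdot\mu_k\nabla$ and view it as an unbounded operator in $W^{-1,q}_D(\Omega)$ with domain $W^{1,q}_D(\Omega)$.

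For the analytic-semigroup part of (ii), the isomorphism property~\eqref{e-iso02} from \cref{assu:iso}~\ref{item:assu:iso:iso} places $0$ in $\rho(A_k)$. Sectoriality of $A_k$ on $W^{-1,2}_D(\Omega)$ follows from Lax--Milgram. Applying the quantitative Sneiberg theorem (\cref{thm:sneiberg}) to $\lambda + A_k$ on the couple of $W^{\mp1,p}_D$-spaces for $p$ in a neighborhood of $2$ containing $q$---the multiplier hypothesis in \cref{assu:iso}~\ref{item:assu:iso:mult} allows invoking \cref{thm:invert-extrapol} here---upgrades the resolvent estimates to a sectorial bound on $W^{-1,q}_D(\Omega)$, uniformly in a suitable sector. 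Restriction via $L^q(\Omega)\embeds W^{-1,q}_D(\Omega)$ from \cref{t-propX} yields the semigroup on $L^q(\Omega)$, and complex interpolation in view of \cref{d-X} and \cref{prop:nonsymmetric-interpolation} transfers it to $X$, since sectoriality of consistent operators is preserved under complex interpolation.

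For BIP, the natural starting point is $L^2(\Omega)$: for real Hermitian $\mu_k$, $A_k$ is non-negative self-adjoint and BIP is immediate; for general real $\mu_k \in \cC(c_\bullet, c^\bullet)$, BIP on $L^2$ follows from the McIntosh bounded $H^\infty$-calculus for $m$-accretive form operators. To propagate BIP to $W^{-1,q}_D(\Omega)$, I would apply Sneiberg-type reasoning to the operators $A_k^{it}$ on the scale $H^{s,q}_D(\Omega)$, using uniformity of the functional-calculus bounds on compact $t$-intervals and again leveraging \cref{thm:invert-extrapol}; alternatively one dualizes from BIP of $A_k^*$ on $W^{1,q'}_D(\Omega)$. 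BIP on $L^q(\Omega)$ and on $X$ then follow by restriction and by complex interpolation (Dore--Venni stability), respectively.

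Part~(i) is deduced from BIP on $W^{-1,q}_D(\Omega)$: the standard Komatsu--Yagi identification of fractional-power domains yields
\begin{equation*}
  D(A_k^{1/2}) = \bigl[W^{1,q}_D(\Omega), W^{-1,q}_D(\Omega)\bigr]_{1/2}
\end{equation*}
up to equivalent norms, and this interpolation space is identified with $L^q(\Omega)$ via \cref{prop:nonsymmetric-interpolation} combined with a reiteration argument through $L^q(\Omega)$. Since $0 \in \rho(A_k)$, $A_k^{1/2}$ is an isomorphism $D(A_k^{1/2}) \to W^{-1,q}_D(\Omega)$, so $A_k^{-1/2}$ is the claimed isomorphism $W^{-1,q}_D(\Omega) \to L^q(\Omega)$. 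The main technical obstacle will be extrapolating BIP to the negative-order space $W^{-1,q}_D(\Omega)$: direct heat-kernel arguments become awkward there, and the fix is to work on the Bessel scale via \cref{thm:invert-extrapol} together with duality to the better-behaved $W^{1,q'}_D(\Omega)$-theory, where the operator acts more naturally as an elliptic principal part.
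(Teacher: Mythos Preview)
Your proposal inverts the logical dependence between (i) and (ii), and this creates a genuine gap. In the paper, part~(i) is established \emph{first}, by citing the Kato square root result for divergence-form operators with mixed boundary conditions (\cite{E19sys,auscher}); the square root isomorphism $(-\nabla\cdot\mu_k\nabla)^{-1/2} \colon W^{-1,q}_D(\Omega) \to L^q(\Omega)$ is then used as a similarity transform to carry BIP and analyticity from $L^q(\Omega)$ (where they are known by citation to~\cite{tERL19}) over to $W^{-1,q}_D(\Omega)$. The $X$-case follows by interpolation. Your plan to deduce (i) from BIP on $W^{-1,q}_D(\Omega)$ requires two ingredients you do not have: first, BIP on $W^{-1,q}_D(\Omega)$ itself---your suggested routes (Sneiberg applied to $A_k^{it}$, or duality from $W^{1,q'}_D(\Omega)$) are not standard and would need substantial justification, since Sneiberg concerns invertibility of a fixed operator on an interpolation scale, not uniform boundedness of a family of operators; second, the identification $[W^{1,q}_D(\Omega),W^{-1,q}_D(\Omega)]_{1/2} = L^q(\Omega)$, which is not covered by any of the interpolation results stated in the paper (\cref{prop:interpolation-zero-trace}, \cref{cor:interpolation-dual}, \cref{prop:nonsymmetric-interpolation} all stay on one side of $L^q$).

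There is also a smaller error in your treatment of the analytic semigroup: you propose to obtain the semigroup on $L^q(\Omega)$ by ``restriction via $L^q(\Omega)\embeds W^{-1,q}_D(\Omega)$''. An embedding $Y \embeds Z$ does not, by itself, allow one to restrict a semigroup on $Z$ to a semigroup on $Y$; one needs invariance of $Y$ under the semigroup together with the right norm bounds, which is precisely what is in question. The paper proceeds in the opposite direction: analyticity on $L^q(\Omega)$ is known first (\cite{tERL19}), and the square root isomorphism from~(i) transfers the resolvent estimates to $W^{-1,q}_D(\Omega)$.
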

\begin{proof}
  \ref{item:div-op-X-a} is~\cite[Thms.~1.2/1.6]{E19sys}, see
  also~\cite[Thm.~5.1]{auscher}. \ref{item:div-op-X-b}: The
  proof for both properties works in the same way: First, the
  property is established on $L^q(\Omega)$, then the square root
  isomorphism from~\ref{item:div-op-X-a} is used to transfer the
  property to $W^{-1,q}_D(\Omega)$, and the $X$ case is finally
  obtained by interpolation.

  For the
  generator property on $L^q(\Omega)$, we refer
  to~\cite[Thm.~3.1]{tERL19}
  and carry over the equivalent resolvent
  estimates~(\cite[Thm.~1.45]{Ouh05}) to
  $W^{-1,q}_D(\Omega)$. Interpolation is then easy.

  Regarding bounded imaginary powers, we refer
  to~\cite[Cor.~3.4]{tERL19} for the $L^q(\Omega)$ case. The transfer
  to $W^{-1,q}_D(\Omega)$ is provided
  by~\cite[Prop.~2.11]{DHP03}. Finally, interpolation works due
  to~\cite[Cor.~7.1.17]{haase}.
\end{proof}

We finally determine the domain of a particular fractional power of
$-\nabla\cdot\mu_k\nabla$ to be $W^{1,q}_D(\Omega)$ which is one of
the cornerstones in the treatment of equations with nonlinear gradient
terms. Here, $\dom_X(-\nabla\cdot\mu_k\nabla)$ denotes the domain of
the corestriction of $-\nabla\cdot\mu_k\nabla$ to $X =
H^{\tau-1,q}_D(\Omega) \subset W^{-1,q}_D(\Omega)$.

\begin{lemma} \label{l-ineter}
  One has
  \begin{equation} \label{eq:frac-power-w1q} \bigl[\dom_ X(-\nabla \cdot \mu_k
    \nabla ),X\bigr]_{\frac\tau2} = \dom_X\bigl ((-\nabla \cdot \mu_k
    \nabla )^{1 - \frac\tau2} \bigr ) = W^{1,q}_D(\Omega).
  \end{equation}
\end{lemma}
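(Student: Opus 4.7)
The plan is to prove the two equalities in~\eqref{eq:frac-power-w1q} separately, relying throughout on the bounded imaginary powers property of $A_k \defn -\nabla\cdot\mu_k\nabla$ on both $W^{-1,q}_D(\Omega)$ and $X$ established in \cref{lem:div-op-X}~\ref{item:div-op-X-b}. The first equality, $\bigl[\dom_X(A_k),X\bigr]_{\tau/2} = \dom_X(A_k^{1-\tau/2})$, is then just the classical identification of the complex interpolation space between the domain of a BIP operator and its ambient space with a fractional power domain, see e.g.~\cite[Thm.~1.15.3]{Triebel}.

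The bulk of the work lies in proving the second equality. My strategy is to construct directly a topological isomorphism $A_k^{1-\tau/2}\colon W^{1,q}_D(\Omega) \to X$ and then to match it with the corresponding isomorphism $\dom_X(A_k^{1-\tau/2}) \to X$ arising from the $X$-realization of $A_k^{1-\tau/2}$. The central ingredient is \cref{lem:div-op-X}~\ref{item:div-op-X-a} which, combined with BIP on $W^{-1,q}_D(\Omega)$ and $\dom_{W^{-1,q}_D}(A_k) = W^{1,q}_D(\Omega)$ from \cref{assu:iso}~\ref{item:assu:iso:iso}, delivers
\begin{equation*}
  \bigl[W^{1,q}_D(\Omega),W^{-1,q}_D(\Omega)\bigr]_{1/2} = \dom_{W^{-1,q}_D}\bigl(A_k^{1/2}\bigr) = L^q(\Omega).
\end{equation*}
Reiteration of complex interpolation (\cite[Thm.~1.9.3]{Triebel}) and the identification of $X$ in \cref{d-X} then produce
\begin{equation*}
  \bigl[W^{1,q}_D(\Omega),W^{-1,q}_D(\Omega)\bigr]_{1-\tau/2} = \bigl[L^q(\Omega),W^{-1,q}_D(\Omega)\bigr]_{1-\tau} = X,
\end{equation*}
and a further appeal to BIP identifies this space with $\dom_{W^{-1,q}_D}(A_k^{\tau/2})$. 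Since $A_k$ is invertible on $W^{-1,q}_D(\Omega)$, so is every fractional power, hence $A_k^{\tau/2}\colon X \to W^{-1,q}_D(\Omega)$ is a topological isomorphism.

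Composing its inverse $A_k^{-\tau/2}\colon W^{-1,q}_D(\Omega)\to X$ with the assumed isomorphism $A_k\colon W^{1,q}_D(\Omega) \to W^{-1,q}_D(\Omega)$ then delivers the sought isomorphism $A_k^{1-\tau/2} = A_k^{-\tau/2}\circ A_k\colon W^{1,q}_D(\Omega) \to X$. On the other hand, by BIP together with invertibility on $X$, the $X$-realization of $A_k^{1-\tau/2}$ maps $\dom_X(A_k^{1-\tau/2})$ (with its graph norm) isomorphically onto $X$. Since both realizations are restrictions/extensions of a common functional calculus for the same differential operator, they agree on $W^{1,q}_D(\Omega) \subset \dom_X(A_k^{1-\tau/2})$; injectivity of $A_k^{1-\tau/2}$ on $X$ then forces $\dom_X(A_k^{1-\tau/2}) = W^{1,q}_D(\Omega)$. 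The main obstacle I anticipate is the bookkeeping required to ensure that the functional calculi on $L^q(\Omega), W^{-1,q}_D(\Omega), X$ are consistent, so that the explicit isomorphism constructed above truly coincides with the operator $A_k^{1-\tau/2}$ on $X$; this however follows from the standard compatibility of fractional powers for compatible sectorial extensions, using that $X \embeds W^{-1,q}_D(\Omega)$ and that $A_k$ on $X$ is precisely the $X$-part of $A_k$ on $W^{-1,q}_D(\Omega)$.
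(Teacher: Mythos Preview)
Your proof is correct and follows essentially the same route as the paper's. The first equality is obtained in both cases from BIP via \cite[Thm.~1.15.3]{Triebel}; for the second, both you and the paper identify $X = \dom_{W^{-1,q}_D}(A_k^{\tau/2})$ (the paper directly via $X = [W^{-1,q}_D,\dom_{W^{-1,q}_D}(A_k^{1/2})]_\tau$ and BIP, you via reiteration) and then shift by the full power $A_k^{-1}\colon W^{-1,q}_D(\Omega)\to W^{1,q}_D(\Omega)$ to reach $W^{1,q}_D(\Omega)$. Your explicit discussion of the compatibility of the fractional-power functional calculi on $W^{-1,q}_D(\Omega)$ and $X$ is a point the paper's proof leaves implicit, so in that respect your argument is more complete.
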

\begin{proof}
  The first equality in~\eqref{eq:frac-power-w1q} follows
  from~\cite[Ch.~1.15.3]{triebel} due to the \emph{bounded imaginary
    powers} property of $-\nabla\cdot\mu_k\nabla$ provided by
  \cref{lem:div-op-X}. Moreover, without loss of generality reversing the interpolation
  order, we have
  \begin{align*}
    X = \bigl[W^{-1,q}_D(\Omega),L^q(\Omega)\bigr]_{\tau} & =
    \bigl[W^{-1,q}_D(\Omega),\dom_{W^{-1,q}_D(\Omega)}\bigl ((-\nabla \cdot \mu_k \nabla
    )^{1/2} \bigr )\bigr]_{\tau} \\ &= \dom_{W^{-1,q}_D(\Omega)}\bigl ((-\nabla \cdot
    \mu_k \nabla )^{\tau/2} \bigr ).
  \end{align*}
  Now use \cref{assu:iso} and apply
  $(-\nabla \cdot \mu_k \nabla)^{-1} \in \cLiso(W^{-1,q}_D(\Omega) \to
  W^{1,q}_D(\Omega))$ to obtain the second equality in~\eqref{eq:frac-power-w1q}.
\end{proof}

We are not able to formulate and prove the main result.

\begin{theorem}[Local-in-time wellposedness] \label{t-formulat}
  Suppose that $u^0 = (u_1^0,u_2^0) \in
  \bfW^{1,q}_D(\Omega)$. Then the van Roosbroeck
  system~\eqref{vanRoos} admits a unique classical local-in-time
  solution $u$ in the sense of
  \cref{def:func-ana-defn}. That is, there is
  $T_\bullet \in (0,T]$ such that
  \begin{equation*}
    u \in C^{1-\frac\tau2}\bigl([0,T_\bullet];\bfX\bigr) \cap
    C\bigl([0,T_\bullet];\bfW^{1,q}_D(\Omega)\bigr) \cap 
    C^1\bigl((0,T_\bullet];\bfX\bigr). 
  \end{equation*}
  The mapping $u^0 \mapsto u$ is Lipschitz continuous from a
  neighbourhood of $u^0$ in $\bfW^{1,q}_D(\Omega)$ to
  $C([0,T_\bullet];\bfX)$.  Moreover, if $u^0$ is real, then $u$ is
  \emph{real} on the interval of existence.
\end{theorem}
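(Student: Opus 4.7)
The plan is to cast \eqref{vanRoos} as an abstract semilinear Cauchy problem $u'(t) + A u(t) = F(u(t))$ in the Banach space $\mathbf{X}$ and then apply classical semilinear parabolic theory (e.g.\ \cite[Ch.~3.3]{henry} or \cite[Ch.~7]{luna}). With $A \defn (-\nabla\cdot\mu_1\nabla) \oplus (-\nabla\cdot\mu_2\nabla)$, \cref{lem:div-op-X} ensures that $-A$ generates an analytic semigroup on $\mathbf{X}$, and \cref{l-ineter} identifies its fractional power domain $\dom_{\mathbf{X}}(A^{1-\tau/2}) = \mathbf{W}^{1,q}_D(\Omega)$; this is the natural intermediate space on which we realise the nonlinearity. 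The elimination of $\varphi$ in \eqref{eq:ellipt-regu-potential} provides an affine continuous map $v \mapsto \varphi(v)$ from $\mathbf{X}$ into $H^{1+\tau,q}_D(\Omega) \embeds W^{1,q}_D(\Omega)$ by \cref{lem:sneiberg-follow}, applied to $\fd - v_1 + v_2 \in X$ (cf.\ \cref{rem:doping-assumption}).

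The heart of the argument is to verify that
\begin{equation*}
  F(v) \defn \bigl( (-1)^{k}\nabla\cdot(v_k\mu_k\nabla\varphi(v)) + r(v)\bigr)_{k=1,2}
\end{equation*}
maps $\mathbf{W}^{1,q}_D(\Omega)$ locally Lipschitz continuously into $\mathbf{X}$. For the reaction contribution this is direct: \cref{assu:recomb} gives local Lipschitz continuity into $L^{q/2}(\Omega) \times L^4(\Neumann;\cH_2) \times L^4(\Pi;\cH_2)$, each of which embeds into $X$ by \cref{t-propX}, and the required ingredient $\varphi(v) \in W^{1,q}_D(\Omega)$ is supplied above. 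The drift term is the main technical point: I read $\nabla\cdot(v_k\mu_k\nabla\varphi(v))$ as the action of the divergence-form operator with coefficient $\rho \defn v_k\mu_k$ on $\varphi(v) \in H^{1+\tau,q}_D(\Omega)$, and invoke \cref{lem:div-grad-extend} to land in $H^{\tau-1,q}_D(\Omega) = X$. This requires $v_k\mu_k \in \cM(H^{\tau,q}(\Omega))$, which I would establish as follows: by \cref{assu:iso} and \cref{rem:multipliers}, $(\mu_k)_{ij} \in \cM(H^{\vartheta,q}(\Omega)) \embeds \cM(H^{\tau,q}(\Omega))$ since $\tau \in (0,\vartheta]$, and $v_k \in W^{1,q}_D(\Omega) \embeds C^{1-3/q}(\Omega) \embeds \cM(H^{\tau,q}(\Omega))$ using $q>3$ and $\tau < 1 - 3/q$, again by \cref{rem:multipliers}. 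Submultiplicativity of the multiplier norm and bilinearity of $(v_k,\varphi(v)) \mapsto \nabla\cdot(v_k\mu_k\nabla\varphi(v))$ then yield a quadratic-type Lipschitz bound of the form $\|F(v) - F(w)\|_{\mathbf{X}} \leq C(R)\,\|v-w\|_{\mathbf{W}^{1,q}_D(\Omega)}$ on bounded sets $\|v\|,\|w\| \leq R$.

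With these ingredients, the standard theory for semilinear parabolic equations with initial datum in the fractional power space $\dom_{\mathbf{X}}(A^{1-\tau/2}) = \mathbf{W}^{1,q}_D(\Omega)$ directly produces a unique local classical solution on some $[0,T_\bullet]$ enjoying exactly the regularity $C^{1-\tau/2}([0,T_\bullet];\mathbf{X}) \cap C([0,T_\bullet];\mathbf{W}^{1,q}_D(\Omega)) \cap C^1((0,T_\bullet];\mathbf{X})$, together with Lipschitz dependence of $u$ on $u^0$. For the reality claim, every piece of the data ($\eps$, $\mu_k$, $\eps_\Neumann$, $\fd$, the recombination functions in \cref{assu:recomb}) is real, so $\overline{u}$ solves the same Cauchy problem as $u$ when $u^0$ is real; uniqueness forces $u = \overline{u}$ on $[0,T_\bullet]$. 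The only genuine obstacle I foresee is the multiplier bookkeeping certifying $v_k\mu_k \in \cM(H^{\tau,q}(\Omega))$ with norm controlled linearly by $\|v_k\|_{W^{1,q}_D(\Omega)}$; everything else reduces to straightforward composition of the results already assembled in \cref{sec:extr-ellipt-regul,sec:semiconduct-assumptions}.
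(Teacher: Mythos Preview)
Your proposal is correct and follows essentially the same route as the paper: abstract semilinear theory on $\mathbf{X}$, analytic semigroup via \cref{lem:div-op-X}, fractional power domain $\mathbf{W}^{1,q}_D(\Omega)$ via \cref{l-ineter}, and the drift Lipschitz estimate obtained from the multiplier property $W^{1,q}_D(\Omega)\embeds C^{1-3/q}(\Omega)\embeds\cM(H^{\tau,q}(\Omega))$ combined with the extrapolated regularity $\varphi(v)\in H^{1+\tau,q}_D(\Omega)$. The paper spells out the resulting bilinear estimate explicitly (its display \eqref{eq:crucial-estimate}), but this is exactly what your invocation of \cref{lem:div-grad-extend} amounts to.

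One genuine wrinkle in your reality argument: you claim that $\overline{u}$ solves the same Cauchy problem and conclude by uniqueness. This needs $F(\overline{v})=\overline{F(v)}$, which for the drift part follows from real coefficients, but for the reaction part \cref{assu:recomb} only asserts that $r^\Omega,r^\Neumann,r^\Pi$ map \emph{real} inputs to \emph{real} outputs---it says nothing about commutation with conjugation on complex inputs. The paper instead argues that the semigroup generated by $\nabla\cdot\mu_k\nabla$ is real (via \cite[Ch.~2.2/4.2]{Ouh05}) and that $F$ preserves the real part of $\mathbf{X}$, so the Picard iteration in \cite[Thm.~6.3.1]{pazy} can be run entirely inside the real subspace from the real datum $u^0$. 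Your conjugation shortcut would need an extra hypothesis on the recombination terms, or else be replaced by this invariance-of-the-real-subspace observation.
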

\begin{proof}
  With the preparationary work done, we can rely on standard
  semilinear parabolic theory as established
  in~\cite[Ch.~3.3]{henry},~\cite[Ch.~6.3]{pazy}
  or~\cite[Ch.~7]{luna} to obtain the local-in-time solution
  with the announced regularity. Indeed, we already know that
  each of the operators $\nabla \cdot \mu_k \nabla$ generates a
  semigroup which is analytic on $X$. Clearly, the diagonal
  operator matrix $\cA$ induced by $\nabla \cdot \mu_k \nabla$
  then also generates an analytic semigroup on $\bfX$. It
  remains to establish that the right-hand sides in the reduced
  problem as defined in \cref{def:func-ana-defn} are
  locally Lipschitz continuous on the $\bfX$-domain of a true
  fractional power $\cA^\alpha$ of $\cA$. In view
  \cref{l-ineter}, we focus on $\alpha = 1-\frac\tau2$ and on
  obtaining the Lipschitz property on $\bfW^{1,q}_D(\Omega)$. This is
  also compatible with the assumed initial value regularity. (Here,
  note that $\dom_\bfX\cA$ is dense in $\bfW^{1,q}_D(\Omega)$ due to
  the interpolation identity~\eqref{eq:frac-power-w1q}.)

  For the reaction terms $r^\Omega, r^\Neumann, r^\Pi$, this is by
  \cref{assu:recomb} and \cref{l-Lipsch}. We only need
  to consider the drift-diffusion terms. It is clear that
  \begin{equation}\label{eq:ellipt-regu-lipschitz}
    \bfW^{1,q}_D(\Omega) \ni v \mapsto \varphi(v) =
    \left(-\nabla\cdot\eps\nabla + 
      \tr_\Neumann^*\eps_\Neumann\tr_\Neumann\right)^{-1}\bigl(\fd -
    v_1 + v_2\bigr) 
    \in H^{1+\tau,q}_D(\Omega)
  \end{equation}
  as defined in~\eqref{eq:ellipt-regu-potential} is Lipschitz
  continuous, recall \cref{lem:sneiberg-follow} and
  \cref{rem:doping-assumption}. Thus, quite similar to the
  estimate in the proof of \cref{l-Lipsch}, we obtain for
  $v,w \in \bfW^{1,q}_D(\Omega)$:
  \begin{multline}
    \bigl\|\nabla \cdot u_k \mu_k \nabla \varphi(w)- \nabla \cdot v_k
    \mu_k \nabla \varphi(v)\bigr\|_X \\ =   \bigl\|\nabla \cdot w_k
    \mu_k \nabla \bigl(\varphi(w)-\varphi(v)\bigr) - \nabla
    \cdot \bigl(v_k-w_k\bigr) 
    \mu_k \nabla \varphi(v)\bigr\|_X\label{eq:lipschitz-to-estimate}
  \end{multline}
  and of course we split the latter with the triangle inequality. From
  there, we rely on~\eqref{eq:ellipt-regu-lipschitz} and
  multiplier properties of $\mu_k$ and 
  $w_k$. This is because if $\omega \in \cM(H^{s,q}(\Omega))$ and $\psi
  \in H^{1+s,q}_D(\Omega)$ for some $s \in (0,\frac1q)$, then using
  \cref{lem:gradient-extension} and estimating as in the proof of
  \cref{lem:div-grad-extend}, we find
  \begin{equation}
    \label{eq:crucial-estimate}
    \bigl\|\nabla\cdot \omega\nabla \psi\bigr\|_{H^{1-s,q}_D(\Omega)} \leq
    \|\omega\|_{\cM(H^{s,q}(\Omega))} \|\psi\|_{H^{1+s,q}_D(\Omega)},
  \end{equation}
  and $H^{1+s,q}_D(\Omega)$ is the biggest space for $\psi$ we
  can determine for which such an estimate works. We had in fact
  assumed that $\mu_k$ is a multiplier on $H^{\tau,q}_D(\Omega)$
  in \cref{assu:iso}~\ref{item:assu:iso:mult}. For
  $w_k$, we observe that
  $W^{1,q}_D(\Omega) \embeds C^{1-3/q}(\Omega)$ and
  $\tau < 1-3/q$ by assumption, see
  \cref{rem:doping-assumption}. Hence
  $C^{1-3/q}(\Omega) \embeds \cM(H^{\tau,q}(\Omega))$ as noted
  in \cref{rem:multipliers} and
  $u_k \in W^{1,q}_D(\Omega)$ is also a multiplier on
  $H^{\tau,q}(\Omega)$. Thus, via~\eqref{eq:crucial-estimate}
  \begin{align}
    \notag
    \bigl\|\nabla \cdot w_k
    \mu_k \nabla \bigl(\varphi(w)-\varphi(v)\bigr)\bigr\|_X& \leq
    \|w_k\|_{\cM(H^{\tau,q}(\Omega))}\|\mu_k\|_{\cM(H^{\tau,q}(\Omega))}
    \bigl\|\varphi(w)-\varphi(v)\bigr\|_{H^{1+\tau,q}_D(\Omega)} \\ &
    \lesssim
    \|w_k\|_{W^{1,q}_D(\Omega)}
    \bigl\|\varphi(w)-\varphi(v)\bigr\|_{H^{1+\tau,q}_D(\Omega)}
        \label{eq:lipschitz-first-estimate}
  \end{align}
  In a similar fashion, the second term is estimated by
  \begin{align}
    \notag
    \bigl\|\nabla \cdot \bigl(v_k-w_k\bigr)
    \mu_k \nabla \varphi(v)\bigr\|_X  &\leq
    \|\mu_k\|_{\cM(H^{\tau,q}(\Omega))} 
    \|\varphi(v)\|_{H^{1+\tau,q}_D(\Omega)}\bigl\|w_k-v_k\bigr\|_{\cM(H^{\tau,q}(\Omega))}
    \\ & \lesssim 
    \|\varphi(v)\|_{H^{1+\tau,q}_D(\Omega)}
    \bigl\|w_k-v_k\bigr\|_{W^{1,q}_D(\Omega)} \label{eq:lipschitz-second-estimate}   
  \end{align}
  Estimating~\eqref{eq:lipschitz-to-estimate} further
  using~\eqref{eq:lipschitz-first-estimate}
  and~\eqref{eq:lipschitz-second-estimate} and using Lipschitz
  continuity of $v \mapsto \varphi(v)$, we obtain the desired
  \emph{local} Lipschitz continuity on
  $\bfW^{1,q}_D(\Omega)$. Hence standard semilinear theory as in
  the works mentioned at the beginning of the proof shows that a
  solution $u$ to the semiconductor equations in the sense of
  \cref{def:func-ana-defn} with the announced exists
  locally in time.

  Finally, let us show that this solution $u$ is indeed a \emph{real}
  one. In fact, this is implied by the following facts:
  \begin{enumerate}[(i)]
  \item The semigroups generated by $\nabla \cdot \mu_k \nabla$ are
    \emph{real} ones, that is, they transform elements from the real
    part of $W^{-1,q}_D(\Omega)$ into \emph{real}
    functions. (See~\cite[Ch.~2.2/4.2]{Ouh05}.) Clearly, this
    transfers to $\cA$ on $\bfX$.
  \item Since the initial values $u_1^0$ and $u_2^0$ were supposed to
    be \emph{real}, the fixed
    point procedure used to construct a solution in the classical
    proof in~\cite[Thm.~6.3.1]{pazy} can in fact be done in the
    \emph{real} part of $\bfX$.
  \end{enumerate}
  This completes the proof.
\end{proof}

\begin{rem} \label{r-concl} 
  \begin{enumerate}[(i)]
  \item Let us point out that the Lipschitz estimate in the proof of
    the main \cref{t-formulat} \emph{only} works so smoothly
    using~\eqref{eq:crucial-estimate} because we in fact know
    that~\eqref{eq:ellipt-regu-lipschitz} holds with the
    $H^{1+\tau,q}_D(\Omega)$ image space, which in turn is a
    consequence of extrapolated elliptic regularity as established in
    \cref{thm:invert-extrapol}, see
    \cref{lem:sneiberg-follow}. It was already mentioned in
    the foregoing proof that $H^{1+\tau,q}_D(\Omega)$ is exactly
    the largest space for which an
    estimate of the form~\eqref{eq:crucial-estimate} can work
    with $\omega = w_k \mu_k$. Note here that $w_k$
    is not fixed and does not necessarily admit a strictly
    positive lower bound.
  \item The presented real world example is one among many
    others which can be treated the same way.  We focused
    here---in contrast to~\cite{vanroos3d}---on the case where
    the chemical potential and the densities in the
    semiconductor model are related by Boltzmann statistics,
    i.e., where their relating function is the exponential (or
    logarithm, depending on the point of view). This has the
    consequence that the resulting evolution equation for the
    densities is a \emph{semilinear} one. In the general case of
    Fermi-Dirac statistics, the corresponding evolution equation
    will be a \emph{quasilinear} one. However, such a
    quasilinear equation can also be treated in a quite similar
    manner to the above. One would use Pr\"uss' pioneering
    theorem~(\cite{pruess}) as the abstract tool, based on the
    fact that the operators $-\nabla \cdot \mu_k \nabla$ in fact
    even satisfy maximal parabolic regularity on the spaces
    $X = H^{\tau-1,q}_D(\Omega)$, see~\cite[Ch.~11]{auscher}
    and~\cite[Lemma~5.3]{RoJo}. The analysis above shows that
    exactly the extrapolation result
    \cref{thm:invert-extrapol} allows to eliminate the
    electrostatic potential implicitly, in a very much simpler
    way as done before, compare~\cite{vanroos2d,vanroos3d}.
  \item It is well known that the solutions of \emph{nonlinear}
    parabolic equations possibly cease to exist after finite
    time. This is even the case if the nonlinearity only depends on
    the unknown itself instead of its gradient, see e.g.~the classical
    paper~\cite{cohen}. Of course, this is even more so the case if
    the nonlinearity contains gradient dependent terms; we refer
    to~\cite[Ch.~IV]{quittner} and references therein.  Therefore the
    question of \emph{global} existence for the solution in the
    general context of \cref{t-formulat} seems out of
    reach. For related arguments from physics, see~\cite[p.~55]{Marko}.
  \item It is possible to relax the requirements on the initial data
    when working in function spaces with temporal weights,
    see~\cite{PruWi}. Since our impetus was to demonstrate the power
    of the extrapolated regularity result for elliptic operators in a
    real-world problem, this is out of scope here. See
    however~\cite[Thm.~7.1.6]{luna}.
  \end{enumerate}
\end{rem}


\end{document}